\documentclass{amsart}
\usepackage{amsmath,amsthm,amssymb}
\usepackage{hyperref,graphicx,color}

\usepackage{a4wide}
\usepackage{soul}

\title{On $q$-integrals over order polytopes}

\author{Jang Soo Kim}
\address[Jang Soo Kim]{
Department of Mathematics, Sungkyunkwan University, Suwon, South Korea}
\email{jangsookim@skku.edu}

\author{Dennis Stanton}
\address[Dennis Stanton]{School of Mathematics, University of Minnesota, Minneapolis, MN 55455}
\email{stanton@math.umn.edu}
\date{\today}

\thanks{The first author was partially supported by Basic Science
  Research Program through the National Research Foundation of Korea
  (NRF) funded by the Ministry of Education (NRF-2013R1A1A2061006).
  The second author was supported by NSF grant DMS-1148634}

\keywords{$q$-integral, order polytope, $q$-Selberg integral,
reverse plane partition, $q$-Ehrhart polynomial}

\subjclass[2010]{Primary: 05A30; Secondary: 05A15, 06A07}

\newtheorem{thm}{Theorem}[section]
\newtheorem{lem}[thm]{Lemma}
\newtheorem{prop}[thm]{Proposition}
\newtheorem{cor}[thm]{Corollary}
\theoremstyle{definition}
\newtheorem{exam}[thm]{Example}
\newtheorem{defn}[thm]{Definition}

\newtheorem{remark}[thm]{Remark}


\setstcolor{red}

\newcommand\Qbinom[3]{\genfrac{[}{]}{0pt}{}{#1}{#2}_{#3}}
\newcommand\qbinom[2]{\Qbinom{#1}{#2}{q}}
\newcommand\LL{\mathcal{L}}
\newcommand\GT{\mathrm{GT}}

\newcommand\order{\mathcal{O}}
\newcommand\Par{\mathrm{Par}}
\newcommand\DeltaBar{\overline{\Delta}}

\newcommand\inv{\operatorname{inv}}
\newcommand\maj{\operatorname{maj}}
\newcommand\Des{\operatorname{Des}}
\newcommand\des{\operatorname{des}}
\newcommand\wt{\operatorname{wt}}

\renewcommand\Re{\operatorname{Re}}

\newcommand\rdiag{\operatorname{rdiag}}
\newcommand\tr{\operatorname{tr}}
\newcommand\RPP{\operatorname{RPP}}

\newcommand\schur{\operatorname{Schur}}

\renewcommand\vec[1]{{#1}}

\def\JV.{Josuat-Verg\`es}


\begin{document}

\begin{abstract}
A combinatorial study of multiple $q$-integrals is developed.
This includes a $q$-volume of a convex polytope, which 
depends upon the order of $q$-integration. 
A multiple $q$-integral over an order polytope of a poset is
interpreted as a generating function of linear extensions of the
poset. Specific modifications of posets are shown to give 
predictable changes in $q$-integrals over 
their respective order polytopes. 
This method is used to combinatorially evaluate some generalized 
$q$-beta integrals. One such application is a combinatorial 
interpretation of a $q$-Selberg integral. 
New generating functions for generalized Gelfand-Tsetlin patterns
and reverse plane partitions are established. A $q$-analogue to a 
well known result in Ehrhart theory is generalized using 
$q$-volumes and $q$-Ehrhart polynomials.
\end{abstract}

\maketitle
\tableofcontents

\newcommand\dqx{d_q x_1\cdots d_q x_n}
\newcommand\DQ[2]{d_q {#1}_1\cdots d_q {#1}_{#2}}
\newcommand\id{\mathrm{id}}

\section{Introduction}

The main object in this paper is the $q$-integral
\[
\int_0^1 f(x) d_qx = (1-q)\sum_{i=0}^n f(q^i) q^i, 
\]
which was introduced by Thomae \cite{Thomae} and Jackson \cite{Jackson}. 
The $q$-integral is a $q$-analogue of the Riemann integral. 
Fermat used it to evaluate $\int_0^1 x^n dx$. See \cite[\S10.1]{AAR_SP} 
for more details of the history of $q$-integrals.  
Many important integrals have
$q$-analogues in terms of $q$-integrals, such as $q$-beta integrals
and $q$-Selberg integrals. In this paper we develop combinatorial
methods to study $q$-integrals.

The original motivation of this paper was to generalize Stanley's
combinatorial interpretation of the Selberg integral \cite{Selberg1944}
\begin{multline*} 
\int_0^1\cdots\int_0^1
\prod_{i=1}^n x_i^{\alpha-1} (1-x_i)^{\beta-1}
\prod_{1\le i<j\le n} |x_i-x_j|^{2\gamma} dx_1\cdots dx_n\\ 
=\prod_{j=1}^n \frac{\Gamma(\alpha+(j-1)\gamma)
\Gamma(\beta+(j-1)\gamma)\Gamma(1+j\gamma)}
{\Gamma(\alpha+\beta+(n+j-2)\gamma)\Gamma(1+\gamma)},
\end{multline*}
where $n$ is a positive integer and $\alpha,\beta,\gamma$ are complex
numbers such that $\mathrm{Re} (\alpha)>0$, $\mathrm{Re} (\beta)>0$,
and
$\mathrm{Re} (\gamma)>
-\min\{1/n,\mathrm{Re}(\alpha)/(n-1),\mathrm{Re}(\beta)/(n-1)\}$.
Stanley \cite[Exercise 1.10 (b)]{EC1} found a combinatorial
interpretation of the above integral when $\alpha-1,\beta-1$ and
$2\gamma$ are nonnegative integers in terms of permutations. His idea
is to interprete the integral as the probability that a random
permutation satisfies certain properties. This idea uses the fact that
a real number $x\in (0,1)$ can be understood as the probability that a
random number selected from $(0,1)$ lies on an interval of length $x$
is equal to $x$. 

In order to find a combinatorial interpretation of a $q$-analogue of
this integral, see \eqref{eq:Askey}, we take a different approach. We
interpret $q$-integrals as generating functions in $q$. This is not
surprising, as the $q$-integral itself is a sum.  Here is a brief
summary of our approach to this problem. We will define a $q$-volume
of a polytope by a certain multiple $q$-integral. The polytopes of
interest are order polytopes of posets. We shall see that simple
operations on posets correspond to insertions of polynomials in the
integrands of the multiple $q$-integral. Using these simple
operations, we can define a poset whose order polytope has a
$q$-volume given by a $q$-Selberg integral. We show that the
$q$-volume of an order polytope is a generating function for linear
extensions of the poset. This gives a combinatorial interpretation of
a $q$-Selberg integral, see Corollary~\ref{cor:majSelberg}.

The purpose of this paper is not limited to answering the motivational
question on the Selberg integral. We have examples and
applications of the combinatorial methods developed in this paper.

The key property of $q$-integrals is the failure of Fubini's theorem,
but in a controlled way, see Corollary~\ref{cor:maj}.  We show in
Theorem~\ref{thm:pw} that $q$-volume of the order polytope of a poset $P$ is equal to a
generating function for $(P,\omega)$-partitions, where the labeling
$\omega$ of the poset $P$ corresponds to the order of
integration. Equivalently, using a well known fact in
$(P,\omega)$-partition theory due to Stanley \cite{Stanley72}, this is
equal to a generating function for the linear extensions of $P$.

The remainder of this paper is organized as follows. 

In Section~\ref{sec:defn} we give definitions that are used throughout
the paper.

In Section~\ref{sec:prop-q-integr} we prove basic properties of the
$q$-integrals. We investigate how Fubini's theorem fails and when it
holds. We give an expansion formula for the $q$-integral over a
polytope determined by certain inequalities.

In Section~\ref{sec:q-integrals-over} we study $q$-integrals over
order polytopes of posets. We show that the $q$-volume of the order
polytope of a poset is the \emph{maj}-generating function for the
linear extensions of the poset, up to a constant factor.

In Section~\ref{sec:operations-posets} we consider simple operations
on $P$ such as adding a new chain. We show how the $q$-integral
changes over the order polytope of $P$ when these operations are performed.

In Section~\ref{sec:examples}, using the results in the previous
sections, we consider several $q$-integrals: the $q$-beta integral, a
$q$-analogue of Dirichlet's integral, a generalized $q$-beta integral
due to Andrews and Askey \cite{AndrewsAskey1981}.  

In Section~\ref{sec:q-selberg-integrals} we give a combinatorial
interpretation of Askey's $q$-Selberg integral in terms of a
generating function of the linear extensions of a poset. 

In Section~\ref{sec:reverse-plane-part} we study reverse plane
partitions using Selberg-type $q$-integrals which involve Schur
functions. We show that these $q$-integrals are essentially generating
functions of reverse plane partitions with certain weights. By using
known evaluation formulas for these $q$-integrals we obtain a formula
for the generating function for the reverse plane partitions with
fixed shape (both shifted and normal) and fixed diagonal entries. This
can be restated as a generating function for generalized
Gelfand-Tsetlin patterns.  Taking the sum of these generating
functions yields the well known trace-generating function formulas for
reverse plane partitions of fixed (shifted or normal) shape. We also
show that Askey's $q$-Selberg integral is equivalent to a new generating
function for reverse plane partitions of a square shape. This allows
us to obtain a new product formula for the generating function for
reverse plane partitions of a square shape with a certain weight.

In Section~\ref{sec:q-ehrh-polyn} we study $q$-Ehrhart polynomials and
$q$-Ehrhart series of order polytopes using $q$-integrals. We show
that the $q$-Ehrhart function of an order polytope is a polynomial in
a particular sense whose leading coefficient is the $q$-volume of the
order polytope of the dual poset.

\section{Definitions}
\label{sec:defn}

In this section we give the necessary definitions with examples for 
$q$-integration, multiple $q$-integration, and order polytopes of posets. 

Throughout this paper we assume $0<q<1$. 
We will use the following notation for $q$-series:
\[
[n]_q = \frac{1-q^n}{1-q}, \qquad [n]_q! = [1]_q[2]_q\cdots[n]_q,
\qquad \qbinom{n}{k}= \frac{[n]_q!}{[k]_q![n-k]_q!},
\]
\[
(a;q)_n = (1-a)(1-aq)\cdots(1-aq^{n-1}),
\qquad (a_1,a_2,\dots,a_k;q)_n=(a_1;q)_n \cdots (a_k;q)_n.
\]

We also use the notation $[n]:=\{1,2,\dots,n\}$. We denote by $S_n$
the set of permutations on $[n]$.
\begin{defn} Let $\pi=\pi_1\pi_2\cdots\pi_n\in S_n$. An integer $i\in[n-1]$ is
called a \emph{descent} of $\pi$ if $\pi_i>\pi_{i+1}$. Let $\Des(\pi)$
be the set of descents of $\pi$.  We define $\des(\pi)$ and
$\maj(\pi)$ to be the number of descents of $\pi$ and the sum of
descents of $\pi$, respectively. We denote by $\inv(\pi)$ the number of
pairs $(i,j)$ such that $1\le i<j\le n$ and $\pi_i>\pi_j$. 
\end{defn}

First, recall \cite[\S10.1]{AAR_SP} the $q$-integral of 
a function $f$ over $(a,b)$.
\begin{defn} For $0<q<1$, the \emph{$q$-integral from $a$ to $b$} is defined by
\begin{equation}
\label{q-intdefn}
\int_a^b f(x) d_qx = (1-q)\sum_{i=0}^\infty 
\left( f(bq^i) bq^i - f(aq^i) aq^i \right).
\end{equation}
\end{defn}

In the limit as $q\to 1$, the $q$-integral becomes the usual integral.
It is easy to see that
\[
\int_a^b x^n d_qx = \frac{b^{n+1}-a^{n+1}}{[n+1]_q}.
\]

We extend the definition of a $q$-integral to a multiple $q$-integral
over a convex polytope. Here the ordering $\pi$ of the variables is important because 
the iterated $q$-integral is not, in general, independent of the ordering. 

\begin{defn}
\label{defn:polytope}
  Let $\pi=\pi_1\cdots\pi_n\in S_n$.  For a function $f(\vec x)$ of
  $n$-variables $\vec x=(x_1,\dots,x_n)$ and a convex polytope
  $D\in\mathbb{R}^n$, \emph{the $q$-integral of $f(\vec x)$ over $D$
    with respect to order $\pi$ of integration} is defined by
\[
\int_D f(x_1,\dots,x_n) d_q x_{\pi_1} \cdots d_q x_{\pi_n} = 
\int_{\min(D_n)}^{\max(D_n)} \cdots \int_{\min(D_1)}^{\max(D_1)}
f(x_1,\dots,x_n) d_q x_{\pi_1} \cdots d_q x_{\pi_n},
\]
where $D_i$ is the set of real numbers depending on the values of
$x_{\pi_{i+1}},\dots,x_{\pi_n}$ given by
\[
D_i = D_i(x_{\pi_{i+1}},\dots,x_{\pi_n})=\{y_{\pi_i}: (y_1,\dots,y_n)\in D,\quad 
y_{\pi_j}=x_{\pi_j} \text{ for } j>i\}.
\]
\end{defn}

If a convex polytope $D$ is determined by a family $Q$ of inequalities of
the sorted variables $\vec x=(x_1,\dots,x_n)$, then we will also write
\[
\int_{Q} f(\vec x) d_q\vec x=\int_{D} f(\vec x) d_q\vec x,
\qquad {\text{where }} d_q\vec x= d_qx_{1}\cdots d_qx_{n}.
\]

\begin{exam}
We have 
\[
\int_{\substack{a\le x\le b\\ c\le y\le d}}f(x,y) d_qx d_qy =
\int_{[a,b]\times[c,d]} f(x,y) d_qx d_qy
=\int_c^d\int_a^b f(x,y) d_qx d_qy,
\]
and, for $\vec x = (x_1,\dots,x_6)$, 
\begin{multline*}
\int_{a\le x_3\le x_1\le x_5\le x_2\le x_4\le x_6\le b} f(\vec x) d_q\vec x
=\int_{\{(x_1,\dots,x_6):a\le x_3\le x_1\le x_5\le x_2\le x_4\le x_6\le b\}} 
f(\vec x) d_q\vec x\\
=\int_{a}^{b} \int_{a}^{x_6} \int_{x_5}^{x_6}
\int_{a}^{x_5} \int_{x_5}^{x_4} \int_{x_3}^{x_5}
  f(x_1,\dots,x_6) d_qx_1\cdots d_q x_6.
\end{multline*}
\end{exam}

Note that we have
\[
\lim_{q\to1}\int_{D} f(x_1,\dots,x_n) d_qx_{\pi_1}\cdots d_qx_{\pi_n}
=\int_{D} f(x_1,\dots,x_n) dx_{\pi_1}\cdots dx_{\pi_n},
\]
which is independent of the ordering $\pi$ if Fubini's theorem holds.
Unlike usual integrals, we do not always have Fubini's theorem for
$q$-integrals of well behaved functions. For example, 
\[
\int_{0\le x_1\le x_2\le 1} d_qx_1d_qx_2 = \frac{1}{1+q}, \qquad
\int_{0\le x_1\le x_2\le 1} d_qx_2d_qx_1 = \frac{q}{1+q}.
\]

We next define the $q$-volume of a convex polytope. Since the
$q$-integral depends on the order of integration, we need to specify
that ordering. 

\begin{defn}
  Suppose that $D$ is a convex polytope in $\mathbb{R}^n$ and
  $\pi=\pi_1\cdots \pi_n\in S_n$. Then the \emph{$q$-volume of
    $D$ with respect to $\pi$} is
\[
V_q(D,\pi) = \int_D d_q x_{\pi_1}\cdots d_q x_{\pi_n}.
\]
If $\pi=12\cdots n$ is the identity permutation, then we will omit $\pi$
and simply write $V_q(D)=V_q(D,\pi)$, that is,
\[
V_q(D) = \int_D d_q x_1\cdots d_q x_n.
\]
\end{defn}

\begin{exam}
If $D=\{(x_1,x_2,x_3)\in[0,1]^3: x_1\le x_3\le x_2\}$, then
\[
V_q(D,312)= \int_{0\le x_1\le x_3\le x_2\le1} d_qx_3 d_qx_1 d_qx_2,
\]
and
\[
V_q(D)=  \int_{0\le x_1\le x_3\le x_2\le1} d_qx_1 d_qx_2 d_qx_3.
\]
\end{exam}

In most of this paper we will integrate over order polytopes of
partially ordered sets (posets).
\begin{defn}
\label{defn:labeling}
If $P$ is a poset with $n$ elements, a \emph{labeling} of $P$ is a
bijection $\omega:P\to[n]$. A pair $(P,\omega)$ of a poset $P$ and its
labeling $\omega$ is called a \emph{labeled poset}.  If
$\omega(x)\le \omega(y)$ for any $x\le_P y$, we say that $\omega$ is a
\emph{natural labeling} of $P$, or $P$ is \emph{naturally labeled}.
\end{defn}

We need some rudiments of $P$-partition theory, which appear in \cite[Chapter 3]{EC1}.

\begin{defn} Let $(P,\omega)$ be a labeled poset.
A \emph{$(P,\omega)$-partition} is a
function $\sigma:P\to \{0,1,2,\dots\}$ such that 
\begin{itemize}
\item $\sigma(x)\ge \sigma(y)$ if $x\le_P y$,
\item $\sigma(x)>\sigma(y)$ if $x\le_P y$ and $\omega(x)> \omega(y)$.
\end{itemize}
For a $(P,\omega)$-partition $\sigma$, the \emph{size of $\sigma$} is defined by 
\[
|\sigma| = \sum_{x\in P}\sigma(x).
\]
\end{defn}

\begin{defn}
A \emph{linear extension of $P$} is an arrangement
$(t_1,t_2,\dots,t_n)$ of the elements in $P$ such that if $t_i<_P t_j$
then $i<j$.  
\end{defn}
\begin{defn}
The \emph{Jordan-H\"older set} $\LL(P,\omega)$ of $P$ is
the set of permutations of the form
$\omega(t_1)\omega(t_2)\cdots \omega(t_n)$ for some linear extension
$(t_1,t_2,\dots,t_n)$ of $P$.
\end{defn}

It is well known \cite[Theorem~3.15.7]{EC1} that
\begin{equation}
  \label{eq:lin_ext}
\sum_{\sigma} q^{|\sigma|}
=\frac{\sum_{\pi\in\LL(P,\omega)}q^{\maj(\pi)}}{(q;q)_n},
\end{equation}
where the sum is over all $(P,\omega)$-partitions $\sigma$. 

We next define a polytope obtained from a poset in a natural way.  For
simplicity, we will use the same letter $x_i$ for the elements $x_i$
of a poset $P$, the coordinates of $\mathbb R^n$, and also the
integration variables.

\begin{defn}
\label{defn:orderpolytope}
Let $P$ be a poset on $\{x_1,\dots,x_n\}$.  For an
$n$-dimensional box 
\[
I=\{(x_1,\dots,x_n): a_i\le x_i\le b_i\},
\]
\emph{the truncated order polytope of $P$ inside $I$} is defined by
\[
\order_I(P) = \{(x_1,\dots,x_n)\in I: x_i\le x_j \mbox{ if }
x_i\le_P x_j\}.
\]
\emph{The order polytope of $P$} is defined by 
\[
\order(P) = \order_{[0,1]^n}(P).
\]
\end{defn}

An important special case of order polytopes is a simplex, which is an
order polytope of a chain. Let us first define chains and anti-chains.

\begin{defn}
  Let $P$ be a poset. Two elements $x$ and $y$ are called
  \emph{comparable} if $x\le_P y$ or $y\le_P x$, and
  \emph{incomparable} otherwise. A \emph{chain} is a poset in which
  any two elements are comparable. An \emph{anti-chain} is a poset in
  which any two distinct elements are incomparable. 
\end{defn}

\begin{defn}
  For $\pi=\pi_1\cdots\pi_n\in S_n$, we denote by $P_{\pi}$ the chain
  on $\{x_1,\dots,x_n\}$ with relations
  $x_{\pi_1}<x_{\pi_2}<\cdots< x_{\pi_n}$.  For real numbers $a<b$, we
  call
\[
\order_{[a,b]^n}(P_{\pi}) = \{(x_1,\dots,x_n)\in \mathbb{R}^n
: a\le x_{\pi_1} \le \dots\le x_{\pi_n}\le b\}
\]
the \emph{truncated simplex}.
\end{defn}

Note that $\order_{[0,1]^n}(P_{\pi})= \order(P_{\pi})$ is the standard simplex 
which corresponds to a permutation $\pi$ and has volume $1/n!.$

We end this section with definitions for partitions and Schur functions.

\begin{defn}
  A \emph{partition} is a weakly decreasing sequence
  $\lambda=(\lambda_1,\dots,\lambda_n)$ of non-negative integers. Each
  nonzero $\lambda_i$ is called a \emph{part} of $\lambda$. The
  \emph{length} $\ell(\lambda)$ of $\lambda$ is the number of
  parts. We identify a partition $\lambda$ with its \emph{Young
    diagram}
\[
\lambda = \{(i,j): 1\le i\le \ell(\lambda), \quad 1\le j \le \lambda_i\}.
\]
The \emph{transpose} $\lambda'$ of a partition $\lambda$ is defined by
\[
\lambda' = \{(j,i): 1\le i\le \ell(\lambda), \quad 1\le j \le \lambda_i\}.
\]
If $\lambda$ has
  $m_i$ parts equal to $i$ for $i\ge1$, we also write $\lambda$ as
  $(1^{m_1},2^{m_2},\dots)$.  For two partitions
  $\lambda=(\lambda_1,\dots,\lambda_n)$ and $\mu=(\mu_1,\dots,\mu_n)$
  we define
\[
\lambda + \mu = (\lambda_1+\mu_1,\lambda_2+\mu_2,\dots,
\lambda_n+\mu_n).
\]
We also define
\[
\delta_n = (n-1,n-2,\dots,1,0).
\]
\end{defn}

\begin{defn}
  Let $\Par_n$ denote the set of all partitions with length at most
  $n$.  For a partition $\lambda\in\Par_n$, we denote
\[
b(\lambda) = \sum_{i=1}^n (i-1) \lambda_i,
\]
and
\[
q^{\lambda}=(q^{\lambda_1},\dots,q^{\lambda_n}).
\]
\end{defn}

\begin{defn}
For a partition $\lambda=(\lambda_1,\dots,\lambda_n)$, 
the \emph{alternant} $a_\lambda(x_1,\dots,x_n)$ is defined by 
\[
a_\lambda(x_1,\dots,x_n) = 
\det(x_j^{\lambda_i+n-i})_{i,j=1}^n. 
\]
\end{defn}

\begin{defn}
For a partition $\lambda=(\lambda_1,\dots,\lambda_n)$, 
the \emph{Schur function} $s_\lambda(x_1,\dots,x_n)$ is defined by 
\[
s_\lambda(x_1,\dots,x_n) = 
\frac{a_{\lambda+\delta_n}(x_1,\dots,x_n)}{a_{\delta_n}(x_1,\dots,x_n)}. 
\]
\end{defn}

\begin{remark}Note that denominator of the Schur function is the Vandermonde determinant 
$$
a_{\delta_n}(x_1,\dots,x_n)=
\prod_{1\le i<j\le n} (x_i-x_j)=\Delta(x).
$$
We shall also use a version of the Vandermonde determinant which is positive on 
$x_1\le x_2\le \dots \le x_n$,
$$
\DeltaBar(x)= \prod_{1\le i<j\le n} (x_j-x_i)=(-1)^{\binom{n}{2}}\Delta(x).
$$
\end{remark}

\section{Properties of $q$-integrals}
\label{sec:prop-q-integr}

In this section we prove several basic properties of the
$q$-integrals. We give explicit examples when Fubini's theorem of interchanging the 
order of integration is allowed 
(see Proposition~\ref{prop:fubini1},  
Corollary~\ref{cor:sym}), and one example how it fails
(see Proposition~\ref{prop:switch}). 
Finally we give a general technical expansion of a $q$-integral
over a special polytope in Proposition~\ref{firsttechlemma}. It is applied to an arbitrary interchange of the order of integration in Corollary~\ref{cor:change_order}.

\begin{lem}
\label{lem:diff}
If $a$ and $b$ are integers such that $a\le b$, then 
  \[
\int^{q^{a}}_{q^{b}} f(x) d_qx
=(1-q)\sum_{i=a}^{b-1} f(q^i) q^i.
\]
\end{lem}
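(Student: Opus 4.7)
The plan is to apply the definition \eqref{q-intdefn} directly with the endpoints $a \mapsto q^b$ and $b \mapsto q^a$, and then show that the two resulting infinite series differ by a tail, leaving a finite sum.

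First I would write
\[
\int_{q^b}^{q^a} f(x)\, d_q x = (1-q) \sum_{i=0}^\infty \Bigl( f(q^a \cdot q^i)\, q^a q^i - f(q^b \cdot q^i)\, q^b q^i \Bigr)
= (1-q) \sum_{i=0}^\infty \Bigl( f(q^{a+i}) q^{a+i} - f(q^{b+i}) q^{b+i} \Bigr).
\]
Next I would reindex each sum: setting $j = a+i$ in the first sum and $j = b+i$ in the second gives
\[
(1-q)\left( \sum_{j=a}^\infty f(q^j) q^j - \sum_{j=b}^\infty f(q^j) q^j \right).
\]
Since $a \le b$, the second series is a tail of the first, so the difference collapses to the finite sum $(1-q)\sum_{j=a}^{b-1} f(q^j) q^j$, as claimed.

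There is essentially no obstacle here: the only subtlety is that one must treat the two series as formal (or, implicitly, assume enough decay of $f$ at $0$ to justify splitting); the lemma is really a mechanical unpacking of the definition, and the telescoping of tails is what turns the infinite sum of \eqref{q-intdefn} into a finite one. I would present the argument in a single short computation without further commentary.
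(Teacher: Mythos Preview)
Your proposal is correct and is exactly the computation the paper has in mind: the paper's proof reads in its entirety ``This follows easily using the definition \eqref{q-intdefn},'' and what you have written is precisely that definition unpacked, reindexed, and telescoped.
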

\begin{proof} This follows easily using the definition \eqref{q-intdefn}.
\end{proof}

When the range for $x$ and $y$ are independent, then we can change the
order of integration for these variables. 

\begin{prop}
\label{prop:fubini1}
We have
\[
\int_{c}^{d}\int_{a}^{b} f(x,y) d_q x d_q y
=\int_{a}^{b}\int_{c}^{d} f(x,y) d_q y d_q x.
\]
\end{prop}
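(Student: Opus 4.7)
The proof is a direct computation: because the bounds on $x$ and $y$ are independent, both iterated $q$-integrals unfold into the same absolutely summable double series, at which point Fubini for ordinary sums finishes the job.

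First I would expand the left-hand side. Applying the definition of the single $q$-integral twice,
\[
\int_a^b f(x,y)\,d_q x
= (1-q)\sum_{i=0}^\infty \bigl(f(bq^i,y)\,bq^i - f(aq^i,y)\,aq^i\bigr),
\]
and then integrating in $y$ from $c$ to $d$ yields
\[
\int_c^d\!\int_a^b f(x,y)\,d_q x\,d_q y
= (1-q)^2 \sum_{j=0}^\infty \sum_{i=0}^\infty F_{i,j},
\]
where
\[
F_{i,j} = f(bq^i,dq^j)\,bq^i dq^j - f(aq^i,dq^j)\,aq^i dq^j - f(bq^i,cq^j)\,bq^i cq^j + f(aq^i,cq^j)\,aq^i cq^j.
\]

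Next I would carry out the analogous expansion for the right-hand side. Integrating first in $y$ from $c$ to $d$ and then in $x$ from $a$ to $b$ produces
\[
\int_a^b\!\int_c^d f(x,y)\,d_q y\,d_q x
= (1-q)^2 \sum_{i=0}^\infty \sum_{j=0}^\infty F_{i,j},
\]
with exactly the same summand $F_{i,j}$; this is just because on a product box, each of the four endpoint combinations $(bq^i,dq^j)$, $(aq^i,dq^j)$, $(bq^i,cq^j)$, $(aq^i,cq^j)$ contributes with the same weighted sign regardless of which variable was summed first.

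Finally, I would invoke Fubini's theorem for sums to interchange the order of summation in $i$ and $j$. The implicit hypothesis is that $\sum_{i,j}|F_{i,j}|<\infty$, which we assume as part of the usual convergence conditions for the $q$-integrals involved; under this hypothesis the two iterated double sums agree, proving the identity. The only subtle point, and therefore the main obstacle worth flagging, is this convergence hypothesis: without it, neither iterated $q$-integral need converge absolutely, and the lemma would be false as stated. For $f$ a polynomial (the setting of interest for the rest of the paper) the sums terminate in the geometric-series sense and absolute convergence is automatic, so the interchange is legitimate.
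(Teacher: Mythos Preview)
Your proof is correct and is essentially the paper's own approach: the paper simply says the identity ``follows immediately from the definition of the $q$-integral,'' which is exactly your expansion of both sides into the same double series $\sum_{i,j} F_{i,j}$. Your added remark about absolute convergence is a reasonable caveat, but the paper does not dwell on it.
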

\begin{proof}
  This follows immediately from the definition \eqref{q-intdefn} of the $q$-integral.
\end{proof}

An example of the failure of Fubini's theorem is the following two variable 
computation on triangles. Note that if $q\to 1$, the difference is 0. 

\begin{prop}
\label{prop:switch}
For $a\le b$, we have
\[
\int_{a\le x\le y\le b} f(x,y) d_q x d_q y
-\int_{a\le x\le y\le b} f(x,y)  d_q y d_q x
=(1-q)\int_{a\le x\le b} xf(x,x)  d_q x.
\]
\end{prop}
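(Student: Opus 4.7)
My plan is to expand both iterated $q$-integrals on the left-hand side using the definition \eqref{q-intdefn} twice, and then show that their difference collapses to exactly the diagonal contribution on the right. Write $I_1=\int_{a\le x\le y\le b} f(x,y)\,d_qx\,d_qy$ and $I_2=\int_{a\le x\le y\le b} f(x,y)\,d_qy\,d_qx$.

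First I apply \eqref{q-intdefn} to the outer $d_qy$-integral in $I_1=\int_a^b\bigl(\int_a^y f(x,y)\,d_qx\bigr)\,d_qy$, which splits it into contributions from $y=bq^j$ and $y=aq^j$ with opposite signs; then I expand each inner $q$-integral $\int_a^y f(x,y)\,d_qx$ the same way. The result is a double series in indices $i,j\ge 0$ whose summands fall into four families, distinguished by which pair of grid points appears as the arguments of $f$: $(bq^{i+j},bq^j)$, $(aq^i,bq^j)$, $(aq^{i+j},aq^j)$, and $(aq^i,aq^j)$, with coefficients $(1-q)^2$ times $b^2q^{i+2j}$, $-ab\,q^{i+j}$, $-a^2q^{i+2j}$, $a^2q^{i+j}$ respectively. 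I perform the analogous expansion for $I_2=\int_a^b\bigl(\int_x^b f(x,y)\,d_qy\bigr)\,d_qx$, yielding four families of terms with arguments $(bq^i,bq^j)$, $(bq^i,bq^{i+j})$, $(aq^i,bq^j)$, $(aq^i,aq^{i+j})$.

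The main combinatorial step is to compare these eight families. The cross terms of shape $f(aq^i,bq^j)$ appear in both expansions with coefficient $-ab\,q^{i+j}$ over the same index set $i,j\ge 0$, so they cancel in $I_1-I_2$. For the $bb$-type terms I use reindexings of the form $(k,l)=(i+j,j)$ or $(k,l)=(i,i+j)$ to bring every such summand into the normal form $b^2q^{k+l}f(bq^k,bq^l)$: the $I_1$-contribution runs over $\{k\ge l\ge 0\}$, while the two $I_2$-contributions combine to the sum over $\{k>l\ge 0\}$, so the difference is precisely the diagonal sum $\sum_{k\ge 0} b^2q^{2k}f(bq^k,bq^k)$. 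An analogous reindexing of the three $aa$-type families yields the diagonal $-\sum_{k\ge 0} a^2q^{2k}f(aq^k,aq^k)$.

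Combining, $I_1-I_2=(1-q)^2\sum_{k\ge 0}\bigl[b^2q^{2k}f(bq^k,bq^k)-a^2q^{2k}f(aq^k,aq^k)\bigr]$, and applying \eqref{q-intdefn} to the function $g(x)=xf(x,x)$ recognizes the right-hand side as $(1-q)\int_a^b xf(x,x)\,d_qx$. The only obstacle is keeping the four reindexings consistent so that the ``missing diagonal'' in each family is identified correctly; conceptually, this identity is the $q$-analogue of the (classically vacuous) fact that integration over $\{x\le y\}$ and $\{x<y\}$ differs only on a measure-zero set, which here becomes nontrivial because the $q$-integral assigns positive ``mass'' $(1-q)x$ to each grid point on the diagonal $x=y$.
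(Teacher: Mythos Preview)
Your proposal is correct and follows essentially the same approach as the paper's own proof: both expand $I_1$ and $I_2$ as double series via \eqref{q-intdefn}, observe that the mixed $f(aq^i,bq^j)$ terms cancel, and then reindex the three $bb$-type terms (and symmetrically the three $aa$-type terms) so that two of them together cover all $(k,l)$ while the third covers only a half-plane, leaving precisely the diagonal $k=l$. The only cosmetic difference is bookkeeping: the paper groups the three $bb$-terms as $(\{s\ge t\}+\{s\le t\})-\{\text{all }s,t\}=\{s=t\}$, whereas you first collapse the two $I_2$-terms to $\{k>l\}$ and then subtract from $I_1$'s $\{k\ge l\}$; the arithmetic is identical.
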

\begin{proof}
The first integral on the left side is
\begin{multline*}
L_1= (1-q) \int_{a\le y\le b} \sum_{i=0}^\infty 
\bigl( y f(yq^i,y)-a f(aq^i,y)\bigr) q^i d_qy\\
= (1-q)^2 \sum_{i,j=0}^\infty \biggl( \biggl( b^2q^{i+2j}f(bq^{i+j},bq^j)-ab
q^{i+j}f(aq^{i},bq^j)\biggr) \\
- \biggl( a^2q^{i+2j}f(aq^{i+j},aq^j)-a^2
q^{i+j}f(aq^{i},bq^j) \biggr)\biggr).
\end{multline*}

Similarly the second integral on the left side is
\begin{multline*}
L_2= (1-q) \int_{a\le x\le b} \sum_{j=0}^\infty 
\bigl( b f(x,bq^j)-x f(x,xq^j)\bigr) q^j d_qx\\
= (1-q)^2 \sum_{i,j=0}^\infty \biggl(\biggl( b^2q^{i+j}f(bq^{i},bq^j)-b^2
q^{2i+j}f(bq^{i},bq^{i+j})\biggr)\\
- \biggl( abq^{i+j}f(aq^{i},bq^j)-a^2
q^{2i+j}f(aq^{i},bq^{i+j}) \biggr)\biggr).
\end{multline*}

So the difference is
\begin{multline*}
L_1-L_2=(1-q)^2 \sum_{i,j=0}^\infty \bigg( b^2\left(
q^{i+2j} f(bq^{i+j}, bq^j)-q^{i+j} f(bq^{i}, bq^j)+q^{2i+j} f(bq^{i},
bq^{i+j})\right)\\
-a^2\left(
q^{i+2j} f(aq^{i+j}, aq^j)-q^{i+j} f(aq^{i}, aq^j)+q^{2i+j} f(aq^{i},
aq^{i+j})\right) \bigg).
\end{multline*}

Let's check when a term $q^{s+t}f(bq^s,bq^t)$ occurs in the first three terms, 
for non-negative integers $s$ and $t$. 
The first term allows $s\ge t$, the third term $s\le t,$ while the second term is all $s,t.$
So the remaining terms are
$$
(1-q)^2\sum_{s=0}^\infty \left( b^2 q^{2s} f(bq^s,bq^s)-a^2 q^{2s} f(aq^s,aq^s)\right)=
(1-q) \int_{a}^b x f(x,x) d_q x.
$$
\end{proof}

Note that Proposition~\ref{prop:switch} implies that if $f(x,y)$
vanishes on the boundary $x=y$, then we can exchange the order of integration
\[
\int_{a\le x\le y\le b} f(x,y) d_q x d_q y
=\int_{a\le x\le y\le b} f(x,y)  d_q y d_q x.
\]
We show that this is also true for certain polytopes, see
Corollary~\ref{cor:vanish_boundary}. 

Proposition~\ref{firsttechlemma} gives a general expansion 
to evaluate $q$-integrals over special polytopes.

\begin{prop}
\label{firsttechlemma}
Suppose that $S$ is a set of pairs $(i,j)$ with $1\le i\ne j\le n$.
For each element $(i,j)\in S$, let $t_{i,j}$ be an integer.  For fixed
integers $r_i\ge s_i\ge0$, let
\[
Q = \{q^{r_i}\le x_i\le q^{s_i}:1\le i\le n\}
\cap \{ q^{t_{i,j}} x_i\le x_j: (i,j)\in S\}.
\]
Then for $\pi=\pi_1\dots\pi_n\in S_n$ we have
\[
\int_Q f(x_1,\dots,x_n) d_q x_{\pi_1} \cdots d_q x_{\pi_n}
=(1-q)^n\sum_{k_1,\dots,k_n} f(q^{k_1},\dots,q^{k_n})
q^{k_1+\cdots+k_n},
\]
where the sum is over all integers $k_1,\dots,k_n$ satisfying
\begin{itemize}
\item $s_i\le k_i< r_i$ for $1\le i\le n$,
\item $t_{i,j}+k_i\ge k_j$ if $(i,j)\in S$ and $\pi^{-1}(i)<\pi^{-1}(j)$,
\item $t_{i,j}+k_i> k_j$ if $(i,j)\in S$ and $\pi^{-1}(i)>\pi^{-1}(j)$.
\end{itemize}
\end{prop}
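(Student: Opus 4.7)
The plan is to proceed by induction on $n$, peeling off the innermost integration at each step and applying Lemma~\ref{lem:diff}. The base case $n=1$ is immediate: since $S$ contains no pairs of distinct indices, $Q=[q^{r_1},q^{s_1}]$ and Lemma~\ref{lem:diff} yields the claim directly.

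For the inductive step, I would analyze the innermost $q$-integral over $x_{\pi_1}$. Writing out the constraints of $Q$ involving $x_{\pi_1}$, one has
\[
\min(D_1)=\max\bigl(q^{r_{\pi_1}},\; q^{t_{i,\pi_1}}x_i:(i,\pi_1)\in S\bigr),\qquad
\max(D_1)=\min\bigl(q^{s_{\pi_1}},\; q^{-t_{\pi_1,j}}x_j:(\pi_1,j)\in S\bigr).
\]
Assuming inductively that the outer variables are ultimately sampled at values $x_i=q^{k_i}$ for integers $k_i$, these bounds simplify to $q^U$ and $q^L$ with
\[
U=\min\bigl(r_{\pi_1},\; t_{i,\pi_1}+k_i:(i,\pi_1)\in S\bigr),\qquad
L=\max\bigl(s_{\pi_1},\; k_j-t_{\pi_1,j}:(\pi_1,j)\in S\bigr).
\]
Lemma~\ref{lem:diff} then converts the innermost integral into $(1-q)\sum_{k_{\pi_1}=L}^{U-1}h(q^{k_{\pi_1}})q^{k_{\pi_1}}$, and unfolding $L\le k_{\pi_1}<U$ gives exactly the four conditions of the proposition at the index $\pi_1$: the box bounds $s_{\pi_1}\le k_{\pi_1}<r_{\pi_1}$, the non-strict inequality $t_{\pi_1,j}+k_{\pi_1}\ge k_j$ for each $(\pi_1,j)\in S$ (where $\pi^{-1}(\pi_1)=1<\pi^{-1}(j)$), and the strict inequality $t_{i,\pi_1}+k_i>k_{\pi_1}$ for each $(i,\pi_1)\in S$ (where $\pi^{-1}(i)>\pi^{-1}(\pi_1)$). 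The distinction between strict and non-strict reflects the asymmetric way Lemma~\ref{lem:diff} treats the two endpoints, with the upper endpoint included and the lower endpoint excluded. The remaining outer integral is a multiple $q$-integral of the same form over an $(n-1)$-dimensional polytope, with the discrete value $q^{k_{\pi_1}}$ substituted for $x_{\pi_1}$ in the inherited constraints; the inductive hypothesis then finishes the argument, and multiplying the $n$ factors of $(1-q)\,q^{k_i}$ produced at each stage produces the claimed $(1-q)^n q^{k_1+\cdots+k_n}$.

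The main obstacle is justifying the preemptive assumption that outer variables take integer-$q$-power values, since the multiple $q$-integral is defined innermost-first and the bounds $\min(D_1),\max(D_1)$ depend symbolically on the outer variables. A clean way to handle this is to observe that, as a function of the outer variables, the inner $q$-integral is piecewise defined with breakpoints only on the grid of integer powers of $q$, because every $S$-constraint relates $x_i$ to $x_j$ by the integer exponent $t_{i,j}$. The outer $q$-integrals, verified inductively from the outermost variable whose bounds $\min(D_n),\max(D_n)$ are themselves integer powers of $q$, sample their integrands only at integer powers of $q$, which is compatible with the above analysis. This confirms the formula.
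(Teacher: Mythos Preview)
Your proposal is correct and follows essentially the same approach as the paper: both arguments recognize that the iterated $q$-integral evaluates each variable only at integer powers of $q$ (justified from the outermost variable inward, since $\min(D_n)$ and $\max(D_n)$ are integer powers and each successive $D_i$ inherits this) and then invoke Lemma~\ref{lem:diff} at every level to extract the summation conditions. The paper's presentation is slightly more direct---it writes the iterated sum $(1-q)^n\sum_{k_{\pi_n}=a_n}^{b_n-1}\cdots\sum_{k_{\pi_1}=a_1}^{b_1-1}$ in one stroke and then checks that $a_i\le k_{\pi_i}<b_i$ unpacks into the three bullet points, rather than framing it as an induction with an ``obstacle'' to resolve---but the substance is identical.
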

\begin{proof}
  Let $D$ be the set of points in $\mathbb{R}^n$ satisfying the
  inequalities in $Q$.  By definition, the left hand side is
  \begin{equation}
    \label{eq:30}
\int_{\min(D_n)}^{\max(D_n)} \cdots \int_{\min(D_1)}^{\max(D_1)}
f(x_1,\dots,x_n) d_q x_{\pi_1} \cdots d_q x_{\pi_n},
  \end{equation}
where 
\[
D_i = D_i(x_{\pi_{i+1}},\dots,x_{\pi_{n}})=\{y_{\pi_i}: (y_1,\dots,y_n)\in D,
y_{\pi_j}=x_{\pi_j} \text{ for } j>i\}.
\]
Note that $D_i$ is the set of all real numbers $y_{\pi_i}$ such that
\begin{itemize}
\item $q^{r_{\pi_i}}\le y_{\pi_i}\le q^{s_{\pi_i}}$,
\item $q^{t_{\pi_i,\pi_j}} y_{\pi_i}\le x_{\pi_j}$ for all $j>i$ with
  $(\pi_i,\pi_j)\in S$,
\item $q^{t_{\pi_j,\pi_i}} x_{\pi_j}\le y_{\pi_i}$ for all $j>i$ with
  $(\pi_j,\pi_i)\in S$. 
\end{itemize}
Thus
\begin{align*}
\max(D_i) &= \min\{q^{s_{\pi_i}},q^{-t_{\pi_i,\pi_j}}x_{\pi_j}:
j>i, (\pi_i,\pi_j)\in S\},\\
\min(D_i) &= \max\{q^{r_{\pi_i}},q^{t_{\pi_j,\pi_i}}x_{\pi_j}:
j>i, (\pi_i,\pi_j)\in S\}.
\end{align*}
In other words, if $x_{\pi_j}=q^{k_{\pi_j}}$ for $j=i+1,\dots,n$, then
$\max(D_i)=q^{a_i}$ and $\min(D_i)=q^{b_i}$, where
\begin{align*}
a_i &= a_i(k_{\pi_{i+1}},\dots,k_{\pi_n})=\max(\{s_{\pi_i}\}\cup
\{ k_{\pi_j}-t_{\pi_i,\pi_j}: j>i, (\pi_i,\pi_j)\in S\}),\\
b_i &= b_i(k_{\pi_{i+1}},\dots,k_{\pi_n})=\min(\{r_{\pi_i}\}\cup
\{ k_{\pi_j}+t_{\pi_j,\pi_i}: j>i, (\pi_j,\pi_i)\in S\}).
\end{align*}
Thus, by Lemma~\ref{lem:diff}, \eqref{eq:30} is equal to
\[
(1-q)^n \sum_{k_{\pi_n}=a_n}^{b_n-1} \cdots \sum_{k_{\pi_1}=a_1}^{b_1-1}
f(q^{k_1}, \dots, q^{k_n})q^{k_1+\cdots+k_n}.
\]
Note that once $k_{\pi_{i+1}},\dots,k_{\pi_n}$ are determined, we have
$a_i\le k_{\pi_i}<b_i$ if and only if
\begin{itemize}
\item $s_{\pi_i}\le k_{\pi_i}<r_{\pi_i}$,
\item $k_{\pi_j}-t_{\pi_i,\pi_j}\le k_{\pi_i}$ for all $j>i$ with
    $(\pi_i,\pi_j)\in S$,
\item $k_{\pi_i} < k_{\pi_j}+t_{\pi_j,\pi_i}$ for all $j>i$ with
    $(\pi_j,\pi_i)\in S$.
\end{itemize}
One can easily check that the above sum is equivalent to the right
hand side of the equation in this proposition.
\end{proof}

An immediate corollary of Proposition~\ref{firsttechlemma} is that if
$f(x_1,\dots,x_n)$ vanishes on the boundary $q^{t_{i,j}}x_i=x_j$ for
all $(i,j)\in S$, then we can change the order of integration.

\begin{cor}
\label{cor:vanish_boundary}
We follow the same notation in Proposition~\ref{firsttechlemma}.
Suppose that the function $f$ satisfies that $f(x_1,\dots,x_n)=0$ if
$q^{t_{i,j}}x_i=x_j$ for any pair $(i,j)\in S$. Then
for any permutations $\pi,\sigma\in S_n$, we have
\[
\int_Q f(x_1,\dots,x_n) d_q x_{\pi_1} \cdots d_q x_{\pi_n}
=\int_Q f(x_1,\dots,x_n) d_q x_{\sigma_1} \cdots d_q x_{\sigma_n}.
\]
\end{cor}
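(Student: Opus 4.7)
The plan is to apply Proposition~\ref{firsttechlemma} directly to both sides of the desired equality and observe that the two lattice-point sums differ only at boundary points where $f$ vanishes by hypothesis.

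First I would invoke Proposition~\ref{firsttechlemma} to write, for any permutation $\tau\in S_n$,
\[
\int_Q f(x_1,\dots,x_n)\, d_q x_{\tau_1}\cdots d_q x_{\tau_n}
=(1-q)^n\sum_{(k_1,\dots,k_n)\in K_\tau} f(q^{k_1},\dots,q^{k_n})\,q^{k_1+\cdots+k_n},
\]
where $K_\tau$ is the set of integer tuples $(k_1,\dots,k_n)$ with $s_i\le k_i<r_i$, and for each $(i,j)\in S$ the constraint $t_{i,j}+k_i\ge k_j$ (if $\tau^{-1}(i)<\tau^{-1}(j)$) or $t_{i,j}+k_i>k_j$ (if $\tau^{-1}(i)>\tau^{-1}(j)$).

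Next I would compare $K_\pi$ and $K_\sigma$. Their symmetric difference is contained in the set $B$ of tuples satisfying all the common bounding inequalities $s_i\le k_i<r_i$, together with an equality $k_j=t_{i,j}+k_i$ for at least one $(i,j)\in S$. For any $(k_1,\dots,k_n)\in B$, at such an $(i,j)$ we have $q^{t_{i,j}}q^{k_i}=q^{k_j}$, i.e., the point $(x_1,\dots,x_n)=(q^{k_1},\dots,q^{k_n})$ lies on the boundary $q^{t_{i,j}}x_i=x_j$. By the vanishing hypothesis on $f$, the summand $f(q^{k_1},\dots,q^{k_n})\,q^{k_1+\cdots+k_n}$ is zero for every tuple in $B$.

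Therefore the two sums indexed by $K_\pi$ and $K_\sigma$ give the same value, which yields the claimed equality. No step looks delicate here; the main point is simply that Proposition~\ref{firsttechlemma} has already translated the order-dependence of the $q$-integral into the strict-vs-weak distinction in finitely many inequalities, and the hypothesis annihilates precisely the lattice points on which this distinction matters.
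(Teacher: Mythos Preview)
Your argument is correct and is exactly the approach intended by the paper: the corollary is stated there as an immediate consequence of Proposition~\ref{firsttechlemma}, and you have simply written out the one observation needed, namely that the index sets $K_\pi$ and $K_\sigma$ differ only at tuples with $t_{i,j}+k_i=k_j$ for some $(i,j)\in S$, where $f$ vanishes.
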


Corollary~\ref{cor:sym} gives a sufficient condition for changing the
domain of a $q$-integral from one standard simplex to another. We will
use this corollary later in this paper. 

\begin{cor}
\label{cor:sym}
  Suppose that $f(x_1,\dots,x_n)$ is symmetric in $x_1,\dots,x_n$ and
  $f(x_1,\dots,x_n) = 0$ if $x_i=x_j$ for any $i\ne j$. Then for
  $\pi\in S_n$ and $0< q< 1$, we have
\[
\int_{0\le x_{\pi_1} \le \dots\le x_{\pi_n}\le 1} f(x_1,\dots,x_n) \dqx = 
\int_{0\le x_1\le \cdots \le x_n \le 1} f(x_1,\dots,x_n) \dqx.
\]  
\end{cor}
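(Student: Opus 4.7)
The plan is to reduce the statement to Corollary~\ref{cor:vanish_boundary} by a simple change of variables. The integration domain $\{0\le x_{\pi_1}\le\cdots\le x_{\pi_n}\le1\}$ is cut out by the outer bounds $0\le x_{\pi_1}$ and $x_{\pi_n}\le1$ together with the chain inequalities $x_{\pi_i}\le x_{\pi_{i+1}}$ for $1\le i<n$, each of which can be written in the form $q^{0}x_{\pi_i}\le x_{\pi_{i+1}}$, i.e., with $t_{\pi_i,\pi_{i+1}}=0$. This is exactly the type of polytope to which Proposition~\ref{firsttechlemma} and its corollary apply.

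First I would invoke Corollary~\ref{cor:vanish_boundary}. Since $f(x_1,\dots,x_n)=0$ whenever $x_i=x_j$ for some $i\ne j$, in particular $f$ vanishes on every face $x_{\pi_i}=x_{\pi_{i+1}}$, so the order of integration may be switched from $d_qx_1\cdots d_qx_n$ to $d_qx_{\pi_1}\cdots d_qx_{\pi_n}$:
\[
\int_{0\le x_{\pi_1}\le\cdots\le x_{\pi_n}\le1} f(x_1,\dots,x_n)\,d_qx_1\cdots d_qx_n
=\int_{0\le x_{\pi_1}\le\cdots\le x_{\pi_n}\le1} f(x_1,\dots,x_n)\,d_qx_{\pi_1}\cdots d_qx_{\pi_n}.
\]
Next I would perform the relabeling $y_i=x_{\pi_i}$. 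This carries the domain to $\{0\le y_1\le\cdots\le y_n\le1\}$, sends each $d_qx_{\pi_i}$ to $d_qy_i$, and rewrites the integrand as $f(y_{\pi^{-1}(1)},\dots,y_{\pi^{-1}(n)})$. Symmetry of $f$ collapses this to $f(y_1,\dots,y_n)$, and one recovers exactly the right hand side of the claimed identity.

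The only mild obstacle is a bookkeeping one: verifying that the simplex really fits the framework of Proposition~\ref{firsttechlemma}, whose statement uses finite integer bounds $q^{r_i}\le x_i\le q^{s_i}$. Here the bound $x_{\pi_n}\le1$ corresponds to $s_{\pi_n}=0$, while $0\le x_{\pi_1}$ is the limiting case $r_{\pi_1}\to\infty$ (equivalently, the definition of the $q$-integral from $0$ to $1$ is already a convergent infinite sum, so no truncation is needed); the remaining variables are implicitly bounded by the chain inequalities. Once this is checked, nothing beyond the symmetry of $f$ is needed to finish.
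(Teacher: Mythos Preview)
Your proof is correct and uses exactly the same ingredients as the paper's proof---a relabeling of variables, the symmetry of $f$, and Corollary~\ref{cor:vanish_boundary}---only in the reverse order: the paper first relabels $x_i\mapsto x_{\sigma_i}$ (with $\sigma=\pi^{-1}$) to land on the standard simplex with integration order $d_qx_{\sigma_1}\cdots d_qx_{\sigma_n}$, then uses symmetry, and finally applies Corollary~\ref{cor:vanish_boundary}. Your remark about the limiting case $r_{\pi_1}\to\infty$ is a fair bookkeeping point, but the paper treats this as routine (the $q$-integral from $0$ is by definition the full convergent sum).
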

\begin{proof}
  Let $\sigma=\pi^{-1}$.  By renaming the variables
  $x_i \mapsto x_{\sigma_i}$, the left hand side becomes
\[
\int_{0\le x_{1} \le \dots\le x_{n}\le 1} f(x_{\sigma_1},\dots,x_{\sigma_n})
d_qx_{\sigma_1} \cdots d_qx_{\sigma_n} .
\]
Since $f$ is symmetric, this is equal to
\[
\int_{0\le x_{1} \le \dots\le x_{n}\le 1} f(x_1,\dots,x_n)
d_qx_{\sigma_1} \cdots d_qx_{\sigma_n} .
\]
We finish the proof by applying Corollary~\ref{cor:vanish_boundary}.
\end{proof}

Corollary~\ref{cor:change_order} allows us to change the order of
integration by modifying the polytope. 

\begin{cor}
\label{cor:change_order}
Let $Q$ be a family of inequalities of the form
$q^{t_{i,j}} x_i\le x_j$ for $1\le i\ne j\le n$ and the inequalities
$q^{r_i}\le x_i\le q^{s_i}$ for $1\le i\le n$. Then for
$\pi,\sigma\in S_n$ we have
\[
\int_Q f(x_1,\dots,x_n) d_q x_{\pi_1} \cdots d_q x_{\pi_n}
=\int_{Q'} f(x_1,\dots,x_n) d_q x_{\sigma_1} \cdots d_q x_{\sigma_n},
\]
where $Q'$ is the family of inequalities obtained from $Q$ as follows.
\begin{itemize}
\item If $Q$ contains the inequality $q^{t_{i,j}} x_i\le x_j$
such that $\pi^{-1}(i)>\pi^{-1}(j)$ and
$\sigma^{-1}(i)<\sigma^{-1}(j)$, then
replace this inequality by $q^{t_{i,j}-1} x_i\le x_j$.
\item If $Q$ contains the inequality $q^{t_{i,j}} x_i\le x_j$
such that $\pi^{-1}(i)<\pi^{-1}(j)$ and
$\sigma^{-1}(i)>\sigma^{-1}(j)$, then
replace this inequality by $q^{t_{i,j}+1} x_i\le x_j$.
\item The remaining inequalities of $Q$ are unchanged. 
\end{itemize}
\end{cor}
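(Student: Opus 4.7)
The plan is to apply Proposition~\ref{firsttechlemma} to both sides and match the resulting conditions on the summation indices. The key arithmetic observation is that for integers $a,b,t$, the strict inequality $t+a > b$ is equivalent to the weak inequality $(t-1)+a \ge b$, and dually $t+a \ge b$ is equivalent to $(t+1)+a > b$. This is exactly the source of the $\pm 1$ shifts in the statement.

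First I would expand the left hand side via Proposition~\ref{firsttechlemma}, obtaining
\[
(1-q)^n\sum_{k_1,\dots,k_n} f(q^{k_1},\dots,q^{k_n})q^{k_1+\cdots+k_n},
\]
where the sum runs over integers satisfying $s_i\le k_i < r_i$ and, for each $(i,j)\in S$, either $t_{i,j}+k_i\ge k_j$ (if $\pi^{-1}(i)<\pi^{-1}(j)$) or $t_{i,j}+k_i > k_j$ (if $\pi^{-1}(i)>\pi^{-1}(j)$). Then I would expand the right hand side similarly using $\sigma$ and the modified exponents $t'_{i,j}$.

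Next I would compare the two families of constraints pair-by-pair on $(i,j)\in S$, breaking into four cases according to the relative orders of $i$ and $j$ under $\pi^{-1}$ and $\sigma^{-1}$. In the two cases where the relative orders agree, no modification is required and $t'_{i,j}=t_{i,j}$. In the case $\pi^{-1}(i)>\pi^{-1}(j)$ and $\sigma^{-1}(i)<\sigma^{-1}(j)$, the left side contributes the strict condition $t_{i,j}+k_i>k_j$, which for integers reads $(t_{i,j}-1)+k_i\ge k_j$; this matches the weak condition on the right side precisely when $t'_{i,j}=t_{i,j}-1$. The symmetric case with the opposite orderings forces $t'_{i,j}=t_{i,j}+1$ by the dual equivalence. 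Since these are exactly the modifications defining $Q'$, the two sums index the same set of tuples $(k_1,\dots,k_n)$ and agree termwise.

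There is no real obstacle here; the result is essentially a bookkeeping consequence of Proposition~\ref{firsttechlemma} combined with the elementary integer identity $a>b \iff a\ge b+1$. The only minor care needed is to verify that the box constraints $s_i\le k_i < r_i$ are untouched (they are, since only the $t_{i,j}$ shift), and to ensure that when neither pair's relative order changes between $\pi$ and $\sigma$ the corresponding inequality is genuinely unchanged in $Q'$, matching the last bullet in the statement.
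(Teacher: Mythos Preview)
Your proposal is correct and follows exactly the approach the paper uses: expand both sides via Proposition~\ref{firsttechlemma} and observe that the resulting summation conditions coincide once the strict/weak inequality swap is absorbed into a $\pm1$ shift of $t_{i,j}$. The paper's proof is the one-line remark ``expand both sides using Proposition~\ref{firsttechlemma}''; you have simply spelled out the case analysis that this entails.
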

\begin{proof}
  This is proved by expanding both sides using
  Proposition~\ref{firsttechlemma}.
\end{proof}

\section{$q$-integrals over order polytopes}
\label{sec:q-integrals-over}

In this section we consider $q$-integrals over order polytopes of posets. 
The main result (Theorem~\ref{thm:pw}) is that the $q$-volume of an order polytope of a poset 
may be written, up to a factor, as the $maj$-generating function of the linear extensions of that poset.
Thus these $q$-integrals may be evaluated using permutation enumeration. 

Recall the truncated order polytope $\order_I(P)$ in
Definition~\ref{defn:orderpolytope}. We first use
Proposition~\ref{firsttechlemma} to evaluate an arbitrary integral
over $\order_I(P)$ as a sum over restricted $(P,\omega)$-partitions $\sigma$.

\begin{thm}
\label{thm:order}
Let $P$ be a poset on $\{x_1,\dots,x_n\}$ and $\omega_n:P\to[n]$ be the
labeling of $P$ given by $\omega_n(x_i)=i$ for $1\le i\le n$.  For
integers $r_1,r_2,\dots,r_n, s_1,s_2,\dots,s_n$ with $r_i\ge s_i\ge0$,
let
\[
I = \{(x_1,\dots,x_n): q^{r_i}\le x_i \le q^{s_i}\}.
\]
Then
\[
\int_{\order_I(P)} f(x_1,\dots,x_n) \dqx=
(1-q)^n \sum_{\sigma}
f(q^{\sigma(x_1)},\dots,q^{\sigma(x_n)}) q^{|\sigma|},
\]
where the sum is over all $(P,\omega_n)$-partitions $\sigma$ 
satisfying $s_i\le\sigma(x_i)<r_i$ for $1\le i\le n$. 
\end{thm}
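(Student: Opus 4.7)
The plan is to reduce the statement directly to Proposition~\ref{firsttechlemma}. The truncated order polytope $\order_I(P)$ is cut out by exactly the kind of inequalities that appear in that proposition: the box inequalities $q^{r_i}\le x_i\le q^{s_i}$, together with the cover-type inequalities $x_i\le x_j$ whenever $x_i<_P x_j$. So I would set
\[
S=\{(i,j):x_i<_P x_j\},\qquad t_{i,j}=0 \text{ for all }(i,j)\in S,
\]
and take $\pi$ to be the identity permutation $12\cdots n$, which matches $d_qx_1\cdots d_qx_n$. Then the hypothesis of Proposition~\ref{firsttechlemma} is satisfied, and its conclusion yields
\[
\int_{\order_I(P)} f(\vec x)\,\dqx
=(1-q)^n\sum_{k_1,\dots,k_n} f(q^{k_1},\dots,q^{k_n})\,q^{k_1+\cdots+k_n},
\]
summed over integer tuples $(k_1,\dots,k_n)$ satisfying the three bulleted conditions of that proposition.

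The second step is to check that these conditions translate exactly into $(P,\omega_n)$-partitions of the required size. Define $\sigma:P\to\{0,1,2,\dots\}$ by $\sigma(x_i)=k_i$. Since $\pi=\id$, we have $\pi^{-1}(i)<\pi^{-1}(j)$ iff $i<j$ iff $\omega_n(x_i)<\omega_n(x_j)$. For a covering pair $x_i<_P x_j$ in $S$, with $t_{i,j}=0$:
\begin{itemize}
\item If $\omega_n(x_i)<\omega_n(x_j)$, Proposition~\ref{firsttechlemma} gives $k_i\ge k_j$, i.e.\ $\sigma(x_i)\ge\sigma(x_j)$.
\item If $\omega_n(x_i)>\omega_n(x_j)$, it gives $k_i>k_j$, i.e.\ $\sigma(x_i)>\sigma(x_j)$.
\end{itemize}
These are exactly the defining conditions of a $(P,\omega_n)$-partition (extended from covers to all comparabilities by transitivity). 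The box condition $s_i\le k_i<r_i$ becomes $s_i\le\sigma(x_i)<r_i$, and $k_1+\cdots+k_n=|\sigma|$.

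The only small subtlety I anticipate is that Proposition~\ref{firsttechlemma} imposes its inequalities only for $(i,j)\in S$, whereas one might want $S$ to encode the full partial order rather than just a set of relations; but since the order polytope can be described equivalently by covers or by all comparabilities (the redundant inequalities are implied), the translation to $(P,\omega_n)$-partitions is immediate and the theorem follows with no further computation. Essentially Theorem~\ref{thm:order} is a repackaging of Proposition~\ref{firsttechlemma} in the poset language, and the proof is a one-line citation once the dictionary between $(k_i)$ and $\sigma$ is set up.
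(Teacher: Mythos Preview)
Your proof is correct and is essentially identical to the paper's own proof: the paper also derives Theorem~\ref{thm:order} immediately from Proposition~\ref{firsttechlemma} by taking $\pi=12\cdots n$, $S=\{(i,j):x_i\le_P x_j\}$, and $t_{i,j}=0$. Your additional remarks spelling out the dictionary between the integer tuples $(k_i)$ and $(P,\omega_n)$-partitions are accurate and simply make explicit what the paper leaves to the reader.
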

\begin{proof}
This is obtained immediately from Proposition~\ref{firsttechlemma} by taking
\[
\pi=12\cdots n,\qquad S=\{(i,j): x_i\le_P x_j\},\qquad
t_{i,j}=0.
\]
\end{proof}

Note that in Theorem~\ref{thm:order} the labeling of the poset is
closely related to the order of integration.

The next corollary expresses a $q$-integral over a truncated order
polytope as a sum of $q$-integrals of truncated simplices.

\begin{cor}\label{cor:LL(P)}
Let $P$ be a poset on $\{x_1,\dots,x_n\}$ and $\omega_n:P\to[n]$ the
labeling of $P$ given by $\omega_n(x_i)=i$ for $1\le i\le n$. For
integers $r_1,r_2,\dots,r_n, s_1,s_2,\dots,s_n$ with $r_i\ge s_i\ge0$,
let
\[
I = \{(x_1,\dots,x_n): q^{r_i}\le x_i \le q^{s_i}\}.
\] 
Then
\[
\int_{\order_I(P)} f(x_1,\dots,x_n) \dqx=
\sum_{\pi\in\LL(P,\omega_n)}
\int_{\order_I(P_\pi)} f(x_1,\dots,x_n) \dqx,
\]
$P_\pi$ is the chain $x_{\pi_1} \le \cdots \le x_{\pi_n}$.
\end{cor}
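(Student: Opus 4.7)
The plan is to reduce both sides of the claimed identity to sums of the same combinatorial objects by applying Theorem~\ref{thm:order} twice, and then invoke the fundamental lemma of $(P,\omega)$-partitions.

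First I would apply Theorem~\ref{thm:order} directly to the left hand side, rewriting it as
\[
(1-q)^n\sum_{\sigma} f(q^{\sigma(x_1)},\dots,q^{\sigma(x_n)})\,q^{|\sigma|},
\]
where $\sigma$ ranges over all $(P,\omega_n)$-partitions with $s_i\le\sigma(x_i)<r_i$. Next I would apply Theorem~\ref{thm:order} to each chain $P_\pi$ on the right hand side: for each $\pi\in\LL(P,\omega_n)$, the integral $\int_{\order_I(P_\pi)} f\,d_q\vec x$ becomes the analogous sum over $(P_\pi,\omega_n)$-partitions with $s_i\le\sigma(x_i)<r_i$.

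The main step is then to show that the set of $(P,\omega_n)$-partitions with the prescribed box constraints equals the disjoint union, over $\pi\in\LL(P,\omega_n)$, of the sets of $(P_\pi,\omega_n)$-partitions with the same box constraints. This is the classical fundamental lemma of $P$-partitions (see \cite[Lemma 3.15.3]{EC1}): every $(P,\omega)$-partition $\sigma$ determines a unique $\pi\in\LL(P,\omega)$ obtained by listing the elements of $P$ so that $\sigma$ is weakly decreasing along the list and ties are broken using $\omega$, and then $\sigma$ is a $(P_\pi,\omega)$-partition; conversely, every $(P_\pi,\omega)$-partition is a $(P,\omega)$-partition because $P_\pi$ is a linear extension of $P$. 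Since the box constraints $s_i\le\sigma(x_i)<r_i$ are conditions on individual values of $\sigma$ and are independent of the poset relations, they are preserved on both sides of this bijection.

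The only genuine thing to verify is that this fundamental lemma applies in our setting with the labeling $\omega_n$ that sends $x_i\mapsto i$; but $\omega_n$ is just an arbitrary bijection $P\to[n]$, which is exactly the generality in which Stanley's decomposition holds. Assembling the sums then yields the identity. I do not anticipate any real obstacle beyond being careful that the disjointness and exhaustiveness of the union over $\pi\in\LL(P,\omega_n)$ survive the insertion of the uniform inequalities $s_i\le\sigma(x_i)<r_i$, which it does because those inequalities are oblivious to the chain refinement $\pi$.
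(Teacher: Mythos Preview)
Your proposal is correct and follows essentially the same approach as the paper: the paper's proof simply cites ``the standard arguments in the $(P,\omega)$-partition theory, see \cite[Lemma~3.15.3]{EC1}'', which is exactly the decomposition you describe after expanding both sides via Theorem~\ref{thm:order}. Your write-up is in fact a more explicit version of what the authors leave implicit.
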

\begin{proof}
  This can be proved by the standard arguments in the
  $(P,\omega)$-partition theory, see \cite[Lemma~3.15.3]{EC1}.
\end{proof}

We shall need the following lemma later.

\begin{lem}\label{lem:par}
Let $f(x_1,\dots,x_n)$ be a function such that $f(x_1,\dots,x_n)=0$ if
$x_i=x_j$ for any $i\ne j$. Then
\[
\sum_{\mu\in\Par_n} q^{|\mu+\delta_n|} f(q^{\mu+\delta_n})
=\frac{1}{(1-q)^n} \int_{0\le x_1\le\cdots\le x_n\le1}
f(x_1,\dots,x_n)\dqx.
\]
\end{lem}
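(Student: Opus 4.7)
The plan is to apply Theorem~\ref{thm:order} to the chain poset $P_{\id}$ on $\{x_1,\ldots,x_n\}$ with $x_1<_{P_{\id}}x_2<_{P_{\id}}\cdots<_{P_{\id}}x_n$, equipped with its natural labeling $\omega_n(x_i)=i$. Its order polytope $\order(P_{\id})=\{(x_1,\ldots,x_n):0\le x_1\le\cdots\le x_n\le 1\}$ is precisely the integration domain of the lemma.

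First I would apply Theorem~\ref{thm:order} to the truncated box $I=\{(x_1,\ldots,x_n):q^r\le x_i\le 1\}$ (so $s_i=0$, $r_i=r$), and then let $r\to\infty$. This limit is painless because $\int_0^1 g(x)\,d_qx$ is by the definition \eqref{q-intdefn} the convergent sum $(1-q)\sum_{i\ge 0}g(q^i)q^i$, of which the finite-$r$ version given by Lemma~\ref{lem:diff} is a truncation. Theorem~\ref{thm:order} evaluates the resulting $q$-integral as $(1-q)^n\sum_\sigma f(q^{\sigma(x_1)},\ldots,q^{\sigma(x_n)})q^{|\sigma|}$, the sum ranging over $(P_{\id},\omega_n)$-partitions with $0\le\sigma(x_i)<r$. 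In the limit the sum is over all $(P_{\id},\omega_n)$-partitions, which, because $\omega_n$ is natural on the chain $P_{\id}$, are exactly the weakly decreasing sequences $\sigma(x_1)\ge\cdots\ge\sigma(x_n)\ge 0$.

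Next I would invoke the hypothesis that $f$ vanishes whenever two of its arguments coincide; this kills every term with $\sigma(x_i)=\sigma(x_{i+1})$ for some $i$, so only the strictly decreasing $\sigma(x_1)>\cdots>\sigma(x_n)\ge 0$ survive. Reparametrizing by $\mu_i:=\sigma(x_i)-(n-i)$ sets up a bijection between such $\sigma$ and partitions $\mu\in\Par_n$, under which $\sigma(x_i)=(\mu+\delta_n)_i$, and hence $(q^{\sigma(x_1)},\ldots,q^{\sigma(x_n)})=q^{\mu+\delta_n}$ and $|\sigma|=|\mu+\delta_n|$. The sum becomes $(1-q)^n\sum_{\mu\in\Par_n}f(q^{\mu+\delta_n})q^{|\mu+\delta_n|}$, and dividing by $(1-q)^n$ yields the claim.

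The only mild technicality is that Theorem~\ref{thm:order} is stated for boxes with finite integer lower exponents $r_i$, whereas here the lower bound $x_i\ge 0$ is not of the form $q^{r_i}$. Either the $r\to\infty$ limiting argument above, or a direct reprise of the proof of Proposition~\ref{firsttechlemma} in which the lower endpoint in \eqref{q-intdefn} is simply zero, disposes of this point; no new ideas are required.
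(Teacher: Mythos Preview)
Your proposal is correct and follows essentially the same route as the paper: apply Theorem~\ref{thm:order} to the naturally labeled chain $P_{\id}$, obtain a sum over weakly decreasing sequences, use the vanishing hypothesis to pass to strictly decreasing sequences, and reparametrize as $\mu+\delta_n$. If anything, you are more careful than the paper about the passage from the finite box $[q^r,1]^n$ to $[0,1]^n$; the paper simply invokes Theorem~\ref{thm:order} directly without comment on this point.
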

\begin{proof}
  By Theorem~\ref{thm:order} and the assumption on the function
  $f(x_1,\dots,x_n)$, the right side is equal to
\[
\sum_{i_1\ge i_2\ge \cdots \ge i_n\ge0}
f(q^{i_1},\dots,q^{i_n}) q^{i_1+\cdots+i_n}
=\sum_{i_1> i_2> \cdots> i_n\ge0}
f(q^{i_1},\dots,q^{i_n}) q^{i_1+\cdots+i_n}.
\]
This is equal to the left side. 
\end{proof}

By taking $I=[0,1]^n$ and $f(x_1,\dots,x_n)=1$ in
Theorem~\ref{thm:order}, we obtain that the $q$-volume of the order
polytope $\order(P)$ is the generating function for
$(P,\omega)$-partitions. The equivalence of the two equations in
Theorem~\ref{thm:pw} follows from the well known fact \eqref{eq:lin_ext}
on $P$-partition theory. 

\begin{thm}[$q$-volume of order polytope]
\label{thm:pw}
  Let $P$ be a poset on $\{x_1,\dots,x_n\}$ with labeling $\omega_n$
  given by $\omega_n(x_i)=i$.  Then
\[
V_q(\order(P)) = \int_{\order(P)} \dqx
= (1-q)^n \sum_{\sigma} q^{|\sigma|},
\]
where the sum is over all $(P,\omega_n)$-partitions $\sigma$.
Equivalently, 
\[
V_q(\order(P)) =\int_{\order(P)} d_qx_1\cdots d_qx_n
=\frac{1}{[n]_q!} \sum_{\pi\in\LL(P,\omega_n)}q^{\maj(\pi)}.
\]
\end{thm}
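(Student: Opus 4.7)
The plan is to deduce both identities from results already in hand. For the first equation, I would apply Theorem~\ref{thm:order} with the integrand $f(x_1,\dots,x_n) = 1$ and a box $I$ approximating $[0,1]^n$. Concretely, take $s_i = 0$ and let $r_i \to \infty$; since $0 < q < 1$ we have $q^{r_i} \to 0$, so $I$ exhausts $[0,1]^n$ and the constraint $s_i \le \sigma(x_i) < r_i$ in the theorem becomes simply $\sigma(x_i) \ge 0$. The conclusion of Theorem~\ref{thm:order} then reads
\[
V_q(\order(P)) = \int_{\order(P)} \dqx = (1-q)^n \sum_{\sigma} q^{|\sigma|},
\]
where the sum ranges over all $(P,\omega_n)$-partitions. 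This gives the first displayed equation.

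For the second equation, I would invoke the $P$-partition identity \eqref{eq:lin_ext} from Stanley, which rewrites
\[
\sum_{\sigma} q^{|\sigma|} = \frac{\sum_{\pi\in\LL(P,\omega_n)}q^{\maj(\pi)}}{(q;q)_n}.
\]
Substituting this into the first equation and using the standard identity $(q;q)_n = (1-q)^n [n]_q!$ yields
\[
V_q(\order(P)) = \frac{(1-q)^n}{(q;q)_n} \sum_{\pi\in\LL(P,\omega_n)}q^{\maj(\pi)} = \frac{1}{[n]_q!} \sum_{\pi\in\LL(P,\omega_n)}q^{\maj(\pi)},
\]
which is the desired second form.

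The only subtle step is the passage to $r_i \to \infty$ in Theorem~\ref{thm:order}, but this is harmless: for fixed $q \in (0,1)$ the order polytope $\order(P)$ is contained in $[0,1]^n$, both sides of Theorem~\ref{thm:order} are monotone in the $r_i$, and the right-hand sum has nonnegative terms converging to the sum over all $(P,\omega_n)$-partitions. Everything else is a direct substitution, so I do not anticipate any real obstacle; the content of the theorem is essentially the specialization $f \equiv 1$ of Theorem~\ref{thm:order} combined with the classical $P$-partition generating function.
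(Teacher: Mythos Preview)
Your proposal is correct and follows essentially the same approach as the paper: specialize Theorem~\ref{thm:order} with $f\equiv 1$ and $I=[0,1]^n$, then invoke \eqref{eq:lin_ext}. The paper simply writes ``taking $I=[0,1]^n$'' without comment, whereas you make the passage $r_i\to\infty$ explicit; this extra care is harmless and arguably cleaner, since Theorem~\ref{thm:order} as stated requires finite integer $r_i$.
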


As a special case of Theorem~\ref{thm:pw}, let us consider the
anti-chain $P$ on $\{1,2,\dots,n\}$. Then $\order(P)$ is the
$n$-dimensional unit cube whose $q$-volume is $1$ and
$\LL(P,\omega)=S_n$. If we apply Theorem~\ref{thm:pw} to $P$,
we obtain
\[
1=\frac{1}{[n]_q!} \sum_{\pi\in S_n}q^{\maj(\pi)}.
\]
This is a well known result for the maj-generating function for
permutations, see \cite{EC1}.

Let's consider another special case of order polytopes, which are
truncated simplices.  The following lemma will be used to evaluate the
$q$-volume of a truncated simplex.

\begin{lem}\label{lem:lambda}
Let $\pi\in S_n$ and $r>s\ge0$. Then 
\[
\sum_{\substack{r>i_1\ge \cdots\ge i_{n}\ge s\\
i_j>i_{j+1}\ \mathrm{if}\  j\in\Des(\pi)}} q^{i_1+\cdots+i_n}
= q^{sn+\maj(\pi)} \frac{(q^{r-s-\des(\pi)};q)_n}{(q;q)_n}.
\]
\end{lem}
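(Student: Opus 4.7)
The plan is to reduce the mixed strict/weak chain sum to a sum over weakly decreasing sequences via a standard substitution that ``absorbs'' the strict inequalities at descents, and then to evaluate the resulting sum by the classical $q$-binomial formula for partitions in a rectangle.

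Concretely, for each $j\in[n]$ set
\[
d_j = |\Des(\pi)\cap\{j,j+1,\dots,n-1\}|,
\]
and substitute $k_j = i_j - d_j$. If $j\notin \Des(\pi)$ then $d_j=d_{j+1}$, so $i_j\ge i_{j+1}$ becomes $k_j\ge k_{j+1}$; if $j\in \Des(\pi)$ then $d_j=d_{j+1}+1$, so the strict inequality $i_j>i_{j+1}$ also becomes $k_j\ge k_{j+1}$. Thus the combined constraints on the $i_j$'s correspond bijectively to the weak chain condition $k_1\ge k_2\ge\cdots\ge k_n$. The boundary conditions transform as $i_1<r \Leftrightarrow k_1 < r-d_1 = r-\des(\pi)$, and $i_n\ge s \Leftrightarrow k_n\ge s-d_n = s$. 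A standard double-counting gives
\[
\sum_{j=1}^n d_j = \sum_{\ell\in\Des(\pi)} |\{j:1\le j\le \ell\}| = \sum_{\ell\in\Des(\pi)}\ell = \maj(\pi),
\]
so $i_1+\cdots+i_n = (k_1+\cdots+k_n) + \maj(\pi)$, and the left-hand side equals
\[
q^{\maj(\pi)}\sum_{r-\des(\pi)>k_1\ge\cdots\ge k_n\ge s} q^{k_1+\cdots+k_n}.
\]

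Finally, setting $R=r-\des(\pi)$ and $\ell_j = k_j - s$ reduces the inner sum to $q^{sn}\sum_{R-s-1\ge \ell_1\ge\cdots\ge \ell_n\ge 0} q^{\ell_1+\cdots+\ell_n}$, which is $q^{sn}$ times the generating function for partitions with at most $n$ parts, each of size at most $R-s-1$. The latter equals
\[
\qbinom{n+R-s-1}{n} = \frac{(q^{R-s};q)_n}{(q;q)_n} = \frac{(q^{r-s-\des(\pi)};q)_n}{(q;q)_n},
\]
yielding the stated right-hand side $q^{sn+\maj(\pi)}(q^{r-s-\des(\pi)};q)_n/(q;q)_n$.

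There is no serious obstacle here: the argument is essentially bookkeeping. The only point requiring care is verifying that the substitution $k_j=i_j-d_j$ is a bijection respecting the constraints (which reduces to the two cases $j\in\Des(\pi)$ and $j\notin\Des(\pi)$), and correctly tracking the change in the exponent via the identity $\sum_j d_j=\maj(\pi)$.
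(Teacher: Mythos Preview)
Your proof is correct. The paper's argument reaches the same $q$-binomial via a slightly different bookkeeping: rather than your substitution $k_j=i_j-d_j$, it passes to the transpose $\lambda'$ of the partition $(i_1,\dots,i_n)$ and observes that the constraints force $(n^s)\subseteq\lambda'\subseteq(n^{r-1})$ together with each $j\in\Des(\pi)$ occurring as a part of $\lambda'$; stripping off these forced contributions (worth $q^{sn}$ and $q^{\maj(\pi)}$ respectively) leaves an arbitrary partition inside an $n\times(r-s-1-\des(\pi))$ box. The two arguments are standard and essentially dual to one another---yours shifts rows, the paper's picks out forced columns---and both land immediately on $\qbinom{n+r-s-1-\des(\pi)}{n}$.
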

\begin{proof}
  Let $A$ be the set of partitions
  $\lambda=(\lambda_1,\dots,\lambda_n)$ such that
  $r>\lambda_1\ge \cdots\ge \lambda_n\ge s$ and
  $\lambda_j>\lambda_{j+1}$ if $j\in\Des(\pi)$.  Observe that if
  $\lambda\in A$, then considering the Young diagram of the transpose
  $\lambda'$ of $\lambda$ we have
  $(n^s)\subseteq \lambda' \subseteq (n^{r-1})$ and $j\in \lambda'$
  for $j\in\Des(\pi)$. Thus the left hand side is equal to
\[
\sum_{\lambda\in A} q^{|\lambda|} = q^{sn}q^{\maj(\pi)}
\qbinom{r-s-1-\des(\pi)+n}{n} = q^{sn}q^{\maj(\pi)}
\frac{(q^{r-s-\des(\pi)};q)_n}{(q;q)_n},
\]
which finishes the proof.
\end{proof}

We now have a formula for the $q$-volume of a truncated simplex. 
This will be used later to evaluate the $q$-beta integral and give a
combinatorial interpretation for the $q$-Selberg integral. 

\begin{cor} [$q$-volume of a truncated simplex]
\label{cor:ab}
  For $\pi\in S_n$ and real numbers $a<b$, the $q$-volume of the
  truncated simplex $\order_{[a,b]^n}(P_\pi)$ is
\[
V_q(\order_{[a,b]^n}(P_\pi))=\int_{a\le x_{\pi_1} \le \dots\le x_{\pi_n}\le b} \dqx =
\frac{b^n q^{\maj(\pi)}}{[n]_q!} (aq^{-\des(\pi)}/b;q)_{n}.
\]  
\end{cor}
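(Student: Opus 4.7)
The plan is to reduce the corollary to a previously established special case via the scaling symmetry of the $q$-integral, evaluate that special case with the tools already assembled (Theorem~\ref{thm:order} and Lemma~\ref{lem:lambda}), and then extend the resulting identity to arbitrary real $a<b$ by a polynomial-identity argument.

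First I would scale out $b$. Writing $x_i = b y_i$, a straightforward application of the definition \eqref{q-intdefn} shows
\[
\int_{a\le x_{\pi_1}\le\dots\le x_{\pi_n}\le b}d_q\vec x
= b^n\int_{a/b\le y_{\pi_1}\le\dots\le y_{\pi_n}\le 1}d_q\vec y.
\]
Since the claimed right-hand side depends on $b$ only through the overall factor $b^n$ and the ratio $a/b$, the identity is homogeneous of degree $n$ in $(a,b)$, and it suffices to establish the formula in the case $b=1$, i.e.\ to prove
\[
\int_{a\le x_{\pi_1}\le\dots\le x_{\pi_n}\le 1}d_q\vec x
=\frac{q^{\maj(\pi)}}{[n]_q!}\,(aq^{-\des(\pi)};q)_{n}.
\]

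Next I would prove this reduced identity in the special case $a=q^m$ for an integer $m\ge 0$. Here Theorem~\ref{thm:order} applies with $P=P_\pi$, $f\equiv 1$, $r_i=m$, $s_i=0$, giving
\[
\int_{\order_{[q^m,1]^n}(P_\pi)}d_q\vec x
=(1-q)^n\sum_\sigma q^{|\sigma|},
\]
the sum running over $(P_\pi,\omega_n)$-partitions $\sigma$ with $0\le\sigma(x_i)<m$. Setting $i_k:=\sigma(x_{\pi_k})$, the partition conditions become exactly $m>i_1\ge i_2\ge\dots\ge i_n\ge 0$ with $i_k>i_{k+1}$ whenever $\omega_n(x_{\pi_k})>\omega_n(x_{\pi_{k+1}})$, i.e.\ when $\pi_k>\pi_{k+1}$, i.e.\ when $k\in\Des(\pi)$. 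This is precisely the index set of Lemma~\ref{lem:lambda} with $r=m$, $s=0$, so that lemma yields $\sum q^{i_1+\dots+i_n}=q^{\maj(\pi)}(q^{m-\des(\pi)};q)_n/(q;q)_n$. Using $(q;q)_n=(1-q)^n[n]_q!$ and $a=q^m$, the right side becomes $\frac{q^{\maj(\pi)}}{[n]_q!}(aq^{-\des(\pi)};q)_n$, as required.

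Finally I would extend from $a\in\{q^m:m\in\mathbb{Z}_{\ge 0}\}$ to all real $a$. The right-hand side $\frac{q^{\maj(\pi)}}{[n]_q!}(aq^{-\des(\pi)};q)_n = \frac{q^{\maj(\pi)}}{[n]_q!}\prod_{i=0}^{n-1}(1-aq^{i-\des(\pi)})$ is manifestly a polynomial in $a$ of degree $n$. For the left-hand side, each one-variable evaluation $\int_a^b x^k\,d_qx=(b^{k+1}-a^{k+1})/[k+1]_q$ is a polynomial in its bounds, and since the iterated $q$-integral over the simplex is a composition of such evaluations (with bounds $a$, $1$, or other integration variables), a simple induction on $n$ shows the $q$-volume is also a polynomial in $a$. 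Two polynomials in $a$ that agree on the infinite set $\{q^m:m\ge 0\}$ are equal, so the identity holds for all real $a$, completing the proof after reversing the scaling of Step~1.

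The main obstacle is the polynomial-extension step: strictly speaking Theorem~\ref{thm:order} (via Proposition~\ref{firsttechlemma}) is stated only when the box-bounds are integer powers of $q$, so the core combinatorial identity is naturally obtained only on this discrete grid, and the passage to general real $a<b$ must be justified. The key observation that makes this routine is the polynomial dependence of iterated $q$-integrals on their bounds.
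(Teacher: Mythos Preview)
Your proof is correct and follows essentially the same route as the paper: reduce to the case where the endpoints are integer powers of $q$, invoke Theorem~\ref{thm:order} together with Lemma~\ref{lem:lambda}, and then extend by the polynomial-identity argument. The only cosmetic difference is that you first scale out $b$ and run the polynomial argument in the single variable $a$, whereas the paper skips the scaling and appeals directly to polynomiality in both $a$ and $b$ (taking $a=q^r$, $b=q^s$); neither version gains anything substantive over the other.
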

\begin{proof}
Since both sides are polynomials in $a$ and $b$, it is sufficient to
show the following for  integers $r>s\ge0$:
\[
\int_{q^r\le x_{\pi_1} \le \dots\le x_{\pi_n}\le q^s} \dqx =
\frac{q^{sn+\maj(\pi)}}{[n]_q!} (q^{r-s-\des(\pi)};q)_{n}.
\]  
This follows from Theorem~\ref{thm:order} and Lemma~\ref{lem:lambda}. 
\end{proof}

By considering the $q$-volume of the box $[a,1]^n$, we obtain an
identity for a generating function for permutations with $\maj$ and
$\des$ statistics. 

\begin{cor}
For a non-negative integer $n$, we have
\[
\sum_{\pi\in S_n} q^{\maj(\pi)}
(aq^{-\des(\pi)};q)_n = (1-a)^n[n]_q!.
\]  
\end{cor}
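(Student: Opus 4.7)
The plan is to evaluate the $q$-volume $V_q([a,1]^n)$ of the cube in two different ways and equate them. The cube is the order polytope of an antichain truncated to $I=[a,1]^n$, and an antichain admits every permutation as a linear extension, which makes the Jordan--Hölder decomposition particularly clean.

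First I would evaluate $V_q([a,1]^n)$ directly via Fubini. Since the bounds on each variable are constants, iterated application of Proposition~\ref{prop:fubini1} together with $\int_a^1 d_q x = 1-a$ yields
\[
V_q([a,1]^n) = \int_{[a,1]^n} d_qx_1\cdots d_qx_n = (1-a)^n.
\]

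Second, let $A_n$ denote the antichain on $\{x_1,\dots,x_n\}$, so that $\order_I(A_n)=[a,1]^n$ and $\LL(A_n,\omega_n)=S_n$. Applying Corollary~\ref{cor:LL(P)} with $f\equiv 1$, the left-hand integral breaks up as a sum of integrals over the truncated simplices $\order_{[a,1]^n}(P_\pi)$ indexed by $\pi\in S_n$. Each of these is evaluated by Corollary~\ref{cor:ab} with $b=1$:
\[
\int_{a\le x_{\pi_1}\le\cdots\le x_{\pi_n}\le 1} d_qx_1\cdots d_qx_n
=\frac{q^{\maj(\pi)}}{[n]_q!}(aq^{-\des(\pi)};q)_n.
\]
Summing over $\pi\in S_n$ gives
\[
V_q([a,1]^n)=\frac{1}{[n]_q!}\sum_{\pi\in S_n} q^{\maj(\pi)}(aq^{-\des(\pi)};q)_n.
\]

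Equating the two expressions for $V_q([a,1]^n)$ and clearing $[n]_q!$ yields the claimed identity. There is no real obstacle: the identity is essentially a bookkeeping consequence of the Jordan--Hölder decomposition together with the two ingredients already established (the truncated simplex formula and Fubini for independent bounds). The only thing to keep in mind is that Corollary~\ref{cor:LL(P)} and Corollary~\ref{cor:ab} both use the identity order of integration $d_qx_1\cdots d_qx_n$, which matches the definition of $V_q$ and so no reordering issues arise.
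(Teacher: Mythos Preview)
Your proof is correct and follows essentially the same route as the paper: compute $V_q([a,1]^n)$ directly as $(1-a)^n$, then decompose it via Corollary~\ref{cor:LL(P)} into the truncated simplices and evaluate each with Corollary~\ref{cor:ab}. The only cosmetic difference is that you spell out the use of Proposition~\ref{prop:fubini1} for the direct evaluation, whereas the paper simply asserts $V_q(\order_I(P))=(1-a)^n$.
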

\begin{proof}
  Let $I=[a,1]^n$ and $P$ the anti-chain on $\{x_1,\dots,x_n\}$.  Then
  $\order_I(P)=I$ and $V_q(\order_I(P)) =(1-a)^n$. On the other hand,
  by Corollary~\ref{cor:LL(P)} and Corollary~\ref{cor:ab}, we have
\[
(1-a)^n = V_q(\order_I(P)) = \sum_{\pi\in S_n} V_q(\order_{[a,1]^n}(P_\pi))
=\sum_{\pi\in S_n} \frac{q^{\maj(\pi)}}{[n]_q!} (aq^{-\des(\pi)};q)_n.
\]  
By multiplying both sides by $[n]_q!$, we have the stated result.
\end{proof}

Theorem~\ref{thm:carlitz} is another generating function for
permutations with $\maj$ and $\des$ statistics due to MacMahon.  This
result is often called Carlitz's formula \cite{Carlitz1975}, see
\cite[p.~6]{Gessel2015}.

\begin{thm}
\label{thm:carlitz}
We have
\[
\sum_{\pi\in S_n} t^{\des(\pi)} q^{\maj(\pi)} 
= (t;q)_{n+1}\sum_{i\ge0} [i+1]_q^n t^i.
\]
\end{thm}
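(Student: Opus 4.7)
The plan is to compute the generating function $\sum_{r\geq 0} t^r V_q([q^{r+1},1]^n)$ in two ways. Directly, $V_q([q^{r+1},1]^n) = (1-q^{r+1})^n = (1-q)^n[r+1]_q^n$, so this sum equals $(1-q)^n \sum_{r\geq 0} [r+1]_q^n t^r$. My goal is to show that it also equals $(1-q)^n A_n(t,q)/(t;q)_{n+1}$, where $A_n(t,q):=\sum_{\pi\in S_n} t^{\des(\pi)} q^{\maj(\pi)}$; canceling $(1-q)^n$ and clearing denominators then yields Carlitz's formula.

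For the second evaluation, I apply Corollary~\ref{cor:LL(P)} to the antichain $P$ on $\{x_1,\ldots,x_n\}$, whose truncated order polytope is the whole box and whose Jordan-H\"older set $\LL(P,\omega_n)$ is all of $S_n$:
\[
V_q([q^{r+1},1]^n) = \sum_{\pi\in S_n} V_q\bigl(\order_{[q^{r+1},1]^n}(P_\pi)\bigr).
\]
This reduces the problem to establishing, for each $\pi$,
\[
\sum_{r\geq 0} t^r V_q\bigl(\order_{[q^{r+1},1]^n}(P_\pi)\bigr) = \frac{(1-q)^n t^{\des(\pi)} q^{\maj(\pi)}}{(t;q)_{n+1}}.
\]

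To prove this per-$\pi$ identity, I use Theorem~\ref{thm:order} applied to the chain $P_\pi$. The $(P_\pi,\omega_n)$-partitions $\sigma$ satisfying $0\leq \sigma(x_i)\leq r$ correspond bijectively, via $\lambda_a:=\sigma(x_{\pi_a})$, to sequences $r\geq \lambda_1\geq \cdots\geq \lambda_n\geq 0$ with $\lambda_a>\lambda_{a+1}$ precisely when $a\in\Des(\pi)$, giving
\[
V_q\bigl(\order_{[q^{r+1},1]^n}(P_\pi)\bigr) = (1-q)^n \sum_{\lambda} q^{|\lambda|}.
\]
Multiplying by $t^r$, summing over $r\geq 0$, and interchanging the order of summation replaces the upper bound $\lambda_1\leq r$ by $\sum_{r\geq\lambda_1}t^r = t^{\lambda_1}/(1-t)$. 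The remaining sum over unbounded $\lambda$ factors under the substitution $d_i=\lambda_i-\lambda_{i+1}$ for $i<n$ and $d_n=\lambda_n$: the constraints become $d_i\geq 0$ for all $i$ with $d_i\geq 1$ iff $i\in\Des(\pi)$, and $|\lambda|=\sum_i i\,d_i$, $\lambda_1=\sum_i d_i$, so
\[
\sum_\lambda q^{|\lambda|} t^{\lambda_1} = \prod_{i=1}^n \sum_{d_i} (tq^i)^{d_i} = \frac{t^{\des(\pi)} q^{\maj(\pi)}}{(tq;q)_n}.
\]
Combining with the factor $1/(1-t)$ and using $(t;q)_{n+1}=(1-t)(tq;q)_n$ gives the per-$\pi$ identity.

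The main obstacle is the bookkeeping in the middle step: confirming that the strict-inequality condition defining $(P_\pi,\omega_n)$-partitions translates into strict decreases $\lambda_a>\lambda_{a+1}$ at positions $a\in\Des(\pi)$ (rather than $\Des(\pi^{-1})$), and that interchanging the $r$- and $\lambda$-summations converts the truncation $\lambda_1\leq r$ into the geometric factor $t^{\lambda_1}/(1-t)$. Both checks are routine once the conventions $\omega_n(x_i)=i$ and $x_{\pi_1}<_{P_\pi}\cdots<_{P_\pi}x_{\pi_n}$ are pinned down.
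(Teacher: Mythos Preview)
Your proof is correct. However, the paper's approach at this point is quite different: rather than giving a self-contained proof of Theorem~\ref{thm:carlitz}, the paper cites it as a known result of MacMahon and then argues that it is \emph{equivalent} to Corollary~\ref{cor:ab}. That equivalence is established algebraically, by expanding $(aq^{-\des(\pi)};q)_n$ and $[i+1]_q^n$ via the ($q$-)binomial theorem to rewrite the two statements as \eqref{eq:majdes1} and \eqref{eq:majdes2}, and then noting that substituting $t=q^{-k}$ for $0\le k\le n$ in \eqref{eq:majdes2} yields \eqref{eq:majdes1}, with the converse following by polynomial interpolation in $t$.

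Your route is instead a direct generating-function computation: you sum the truncated $q$-volumes $V_q([q^{r+1},1]^n)$ against $t^r$, decompose the cube into simplices via Corollary~\ref{cor:LL(P)}, and evaluate each simplex contribution by summing over $(P_\pi,\omega_n)$-partitions. This is exactly the machinery the paper develops later, in Section~\ref{sec:q-ehrh-polyn}: your identity is the special case of Corollary~\ref{cor:ehr} for the antichain $P$, and the paper explicitly notes after that corollary that this special case recovers MacMahon's identity. So your argument is not the paper's Section~4 argument, but it does anticipate the $q$-Ehrhart computation of Section~9. One small simplification: once you have invoked Corollary~\ref{cor:LL(P)}, you could quote Corollary~\ref{cor:ab} directly to get $V_q(\order_{[q^{r+1},1]^n}(P_\pi)) = (1-q)^n q^{\maj(\pi)}\qbinom{r+n-\des(\pi)}{n}$ and then sum via the $q$-binomial theorem, avoiding the $d_i$-substitution entirely; this is what the proof of Corollary~\ref{cor:ehr} does.
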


We show that Corollary~\ref{cor:ab} and Theorem~\ref{thm:carlitz} are
equivalent.  By expanding $(aq^{-\des(\pi)};q)_n$ using the
$q$-binomial theorem and comparing the coefficients of $a^k$ in both
sides, one can restate Corollary~\ref{cor:ab} as follows: for
$0\le k\le n$,
  \begin{equation}
    \label{eq:majdes1}
\sum_{\pi\in S_n} (q^{-k})^{\des(\pi)} q^{\maj(\pi)}
=q^{-\binom k2} \binom nk (q;q)_k (q;q)_{n-k}.
  \end{equation}
  On the other hand, by expanding the numerator of
  $[i+1]_q^n = (1-q^{i+1})^n/(1-q)^n$, using the binomial theorem and
the geometric series, one can check that
  Theorem~\ref{thm:carlitz} can be restated as
  \begin{equation}
    \label{eq:majdes2}
\sum_{\pi\in S_n} t^{\des(\pi)} q^{\maj(\pi)}
=\frac{1}{(1-q)^n} \sum_{i=0}^n \binom ni (-q)^i
(t;q)_i (q^{i+1}t;q)_{n-i}.
  \end{equation}
  If we substitute $t=q^{-k}$ in \eqref{eq:majdes2} for $0\le k\le n$ 
  then we obtain \eqref{eq:majdes1}. In order to obtain
  \eqref{eq:majdes2} from \eqref{eq:majdes1} one can argue as
  follows. By \eqref{eq:majdes1}, we know that \eqref{eq:majdes2} is
  true when $t=q^{-k}$ for $0\le k\le n$.  Since both sides of
  \eqref{eq:majdes2} are polynomials in $t$ of degree at most $n$ and
  the equation has $n+1$ different solutions, both sides are the same
  as polynomials in $t$.

When $a=0$ and $b=1$ in Corollary~\ref{cor:ab} we obtain the following
corollary. It explicitly demonstrates the failure of Fubini's theorem by
finding the differences in the $q$-volumes of the standard $n!$ simplices which 
lie inside an $n$-dimensional cube.

\begin{cor}
[$q$-volume of a simplex]
\label{cor:maj}
For $\pi\in S_n$, the $q$-volume of the simplex $\order(P_\pi)$ is 
\[
V_q(\order(P_\pi))=\int_{0\le x_{\pi_1} \le \dots\le x_{\pi_n}\le 1} \dqx = \frac{q^{\maj(\pi)}}{[n]_q!}.
\]  
\end{cor}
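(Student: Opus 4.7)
The plan is to obtain Corollary~\ref{cor:maj} as the immediate specialization $a=0$, $b=1$ of Corollary~\ref{cor:ab}, which has already been established just above via Theorem~\ref{thm:order} and Lemma~\ref{lem:lambda}. Since $\order(P_\pi) = \order_{[0,1]^n}(P_\pi)$ by definition, there is nothing to reinterpret geometrically; it is purely a matter of evaluating the right-hand side of the truncated formula at those endpoints.

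Concretely, I would start from
\[
V_q(\order_{[a,b]^n}(P_\pi)) = \frac{b^n q^{\maj(\pi)}}{[n]_q!} (aq^{-\des(\pi)}/b;q)_{n},
\]
and observe that setting $b=1$ makes $b^n = 1$, while setting $a=0$ makes the argument of the $q$-Pochhammer symbol vanish, so that
\[
(a q^{-\des(\pi)}/b;q)_n \Big|_{a=0,b=1} = (0;q)_n = \prod_{i=0}^{n-1}(1-0\cdot q^i) = 1.
\]
Combining these two simplifications yields $V_q(\order(P_\pi)) = q^{\maj(\pi)}/[n]_q!$, which is the desired identity.

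There is no real obstacle here: the corollary is a direct substitution, and Corollary~\ref{cor:ab} has already been proved using Theorem~\ref{thm:order} (which converts the $q$-integral over the truncated simplex into a sum over $(P_\pi,\omega_n)$-partitions) together with Lemma~\ref{lem:lambda} (which evaluates the resulting sum over partitions with prescribed strict descents in closed form using a transpose-of-Young-diagram argument and the $q$-binomial coefficient). If one wished to give a self-contained derivation without invoking Corollary~\ref{cor:ab}, the cleanest route would be to apply Theorem~\ref{thm:order} directly to the chain $P_\pi$ with $r_i=1$, $s_i=0$ and $I=[0,1]^n$, obtaining
\[
V_q(\order(P_\pi)) = (1-q)^n \sum_{i_{\pi_1}\ge \cdots \ge i_{\pi_n}\ge 0,\; i_{\pi_j}>i_{\pi_{j+1}} \text{ if } j\in\Des(\pi)} q^{i_1+\cdots+i_n},
\]
and then apply Lemma~\ref{lem:lambda} with $s=0$ and $r\to\infty$ (equivalently, with the upper bound removed), whose limit gives $q^{\maj(\pi)}/((q;q)_n/(1-q)^n) = q^{\maj(\pi)}/[n]_q!$ after cancelling the $(1-q)^n$. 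Either way, the proof is a one-line substitution.
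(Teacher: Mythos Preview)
Your proposal is correct and matches the paper's approach exactly: the paper simply states that the corollary is the case $a=0$, $b=1$ of Corollary~\ref{cor:ab}, which is precisely your substitution argument. Your observation that $(0;q)_n=1$ is the only computation needed, and your optional self-contained route via Theorem~\ref{thm:order} and Lemma~\ref{lem:lambda} is a fine elaboration but not required.
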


\section{Operations on posets}
\label{sec:operations-posets}

 Suppose that a poset $P$ is modified to obtain another poset $P'.$
Is $V_q(\order(P'))$ related to the $q$-integral defining $V_q(\order(P))$?
In this section we answer this question for the following types of modifications:
\begin{enumerate}
\item attaching a chain below an element (Lemma~\ref{lem:meancat}),
\item attaching a chain above an element (Lemma~\ref{lem:happycat}),
\item attaching a chain between two elements (Lemma~\ref{lem:xyx}),
\item inserting a chain which interlaces another chain (Lemmas~\ref{lem:interlacing1} 
and \ref{lem:interlacing2}).
\end{enumerate}

\begin{figure}
  \centering
\includegraphics{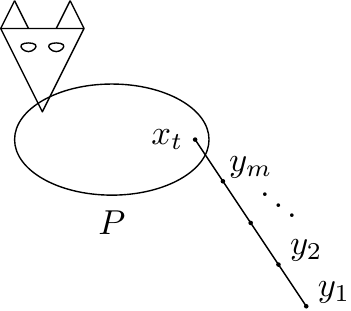} \qquad
\includegraphics{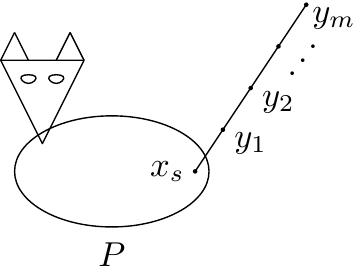}\qquad
\includegraphics{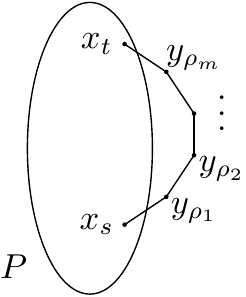}
\caption{Illustrations of the scaredy cat lemma (left), the happy cat
  lemma (middle), and the attaching chain lemma (right).}
\label{fig:catlemma}
\end{figure}

First we insert a chain below a fixed element. See the left figure in
Figure~\ref{fig:catlemma}.
\begin{lem}[Scaredy Cat Lemma]
\label{lem:meancat}
  Let $P$ be a poset on $\{x_1,\dots,x_n\}$ and $t\in[n]$.  Let $Q$ be
  the poset on $\{x_1,\dots,x_n,y_1,\dots,y_m\}$ with relations
  $x_i\le_{Q} x_j$ if and only if $x_i\le_{P} x_j$, and
  $y_{1} \le_{Q} \cdots \le_{Q} y_{m}\le_{Q} x_t$. In other words, $Q$
  is obtained from $P$ by attaching a chain $y_1\le \cdots\le y_m$
  below $x_t$.  Then we have
\[
\int_{\order(P)} x_t^m  f(x_1,\dots,x_n) \dqx 
=[m]_q!\int_{\order(Q)} f(x_1,\dots,x_n)\DQ{y}{m} \dqx.
\]
\end{lem}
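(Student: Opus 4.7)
The plan is to evaluate the right hand side by performing the inner $q$-integrations over $y_1,\dots,y_m$ first, since in $\DQ{y}{m}\dqx$ those variables appear innermost. According to Definition~\ref{defn:polytope}, once the outer variables $(x_1,\dots,x_n)$ are held fixed in $\order(Q)$, the $y$-coordinates range over the slice
\[
\{(y_1,\dots,y_m)\in[0,1]^m : y_1\le\cdots\le y_m\le x_t\},
\]
because the only $Q$-relations involving the $y$'s are $y_1\le_Q\cdots\le_Q y_m\le_Q x_t$, and the upper bound $y_m\le 1$ is automatic from $x_t\le 1$. In particular, no constraint on the $y$'s involves any $x_i$ other than $x_t$.

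Next, I would invoke Corollary~\ref{cor:ab} with $\pi$ the identity permutation of $S_m$, $a=0$, and $b=x_t$. Since $\maj(\id)=\des(\id)=0$ and $(0;q)_m=1$, it gives
\[
\int_{0\le y_1\le\cdots\le y_m\le x_t} \DQ{y}{m}=\frac{x_t^m}{[m]_q!}.
\]
(Alternatively this is a trivial iterated computation, $\int_0^{x_t}\cdots\int_0^{y_2}\DQ{y}{m}=x_t^m/[m]_q!$.)

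Substituting this into the right hand side and noting that once the $y$'s are removed the remaining constraints on $(x_1,\dots,x_n)$ in $\order(Q)$ are exactly those defining $\order(P)$, we get
\[
[m]_q!\int_{\order(Q)} f(x_1,\dots,x_n)\DQ{y}{m}\dqx
=[m]_q!\int_{\order(P)} f(x_1,\dots,x_n)\cdot\frac{x_t^m}{[m]_q!}\dqx,
\]
which collapses to the left hand side after the $[m]_q!$ cancellation. There is no serious obstacle; the only point requiring a moment of care is the matching of the order-polytope slice with the stated simplex on the $y$'s, which is immediate from the fact that the attached chain is related to $P$ only through the single inequality $y_m\le x_t$.
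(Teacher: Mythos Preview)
Your proof is correct and essentially identical to the paper's: both arguments evaluate the inner simplex integral $\int_{0\le y_1\le\cdots\le y_m\le x_t}\DQ{y}{m}=x_t^m/[m]_q!$ via Corollary~\ref{cor:ab} and then identify the remaining integral over $(x_1,\dots,x_n)$ with the integral over $\order(P)$. The only cosmetic difference is that the paper goes from the left hand side to the right, while you go from right to left.
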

\begin{proof}
By Corollary~\ref{cor:ab}, we have
\[
\int_{0\le y_1\le \cdots \le y_m\le x_t} d_q y_1\cdots d_q y_m=
\frac{x_t^m}{[m]_q!}.
\]  
Thus, the left hand side of the equation can be written as
\[
[m]_q!\int_{\order(P)} \left(
\int_{0\le y_1\le \cdots \le y_m\le x_t} d_q y_1\cdots d_q y_m\right)
f(x_1,\dots,x_n) \dqx.
\]
This is equal to the right hand side of the equation. 
\end{proof}

Next we insert a chain above a fixed element. See the middle figure in Figure~\ref{fig:catlemma}.
\begin{lem}[Happy Cat Lemma]
\label{lem:happycat}
  Let $P$ be a poset on $\{x_1,\dots,x_n\}$ and $s\in[n]$.  Let $Q$ be
  the poset on $\{x_1,\dots,x_n,y_1,\dots,y_m\}$ with relations
  $x_i\le_{Q} x_j$ if and only if $x_i\le_{P} x_j$, and
  $\le_Q x_s\le y_{1} \le_{Q} \cdots \le_{Q} y_{m}$. In other words, $Q$
  is obtained from $P$ by attaching a chain $y_1\le \cdots\le y_m$
  above $x_s$.  Then we have
\[
\int_{\order(P)} (qx_s;q)_m f(x_1,\dots,x_n) \dqx 
=[m]_q!\int_{\order(Q)} f(x_1,\dots,x_n)\dqx \DQ{y}{m} .
\]
\end{lem}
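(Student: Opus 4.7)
The plan is to mimic the Scaredy Cat proof by factoring out an inner $y$-integration, but with the crucial extra step of first swapping the order of integration so that the $y$ differentials become innermost. In the right-hand side as written, $\DQ{y}{m}$ sits outside $\dqx$, so the $y$'s are integrated last. A direct (outside) $y$-integration over $x_s\le y_1\le\cdots\le y_m\le 1$ would, by Corollary~\ref{cor:ab}, produce only $(x_s;q)_m/[m]_q!$; the additional factor of $q$ needed to reach $(qx_s;q)_m$ must come from the controlled failure of Fubini quantified in Corollary~\ref{cor:change_order}.

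First I would apply Corollary~\ref{cor:change_order} to swap from the integration order $\pi$ (with $y_1,\dots,y_m$ outermost) to a new order $\sigma$ (with $y_1,\dots,y_m$ innermost), while leaving the internal order of the $x$'s and of the $y$'s themselves unchanged. Among all the defining inequalities of $\order(Q)$, the only one whose endpoints flip relative position is $x_s\le y_1$: one checks that $\pi^{-1}(x_s)<\pi^{-1}(y_1)$ but $\sigma^{-1}(x_s)>\sigma^{-1}(y_1)$. By the corollary, this single relation is replaced by $qx_s\le y_1$, with all other poset relations and the $[0,1]$ box constraints unchanged.

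After the swap, the inner $y$-integral is independent of $f$ and can be pulled out. Applying Corollary~\ref{cor:ab} with $a=qx_s$, $b=1$, and the identity permutation (so $\maj=\des=0$) gives
$$\int_{qx_s\le y_1\le\cdots\le y_m\le 1} d_q y_1\cdots d_q y_m=\frac{(qx_s;q)_m}{[m]_q!}.$$
Multiplying by $[m]_q!$ and integrating the remaining factor against $f$ over $\order(P)$ yields the left-hand side. The main technical step, and essentially the only place any care is needed, is verifying the $+1$ shift of the exponent in Corollary~\ref{cor:change_order}; once the reversal of the relative order of $x_s$ and $y_1$ is confirmed, the $q$ inside $(qx_s;q)_m$ drops out automatically.
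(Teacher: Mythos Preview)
Your proof is correct and follows essentially the same route as the paper's: both hinge on Corollary~\ref{cor:ab} for the simplex $\int_{qx_s\le y_1\le\cdots\le y_m\le1}d_qy = (qx_s;q)_m/[m]_q!$ and on Corollary~\ref{cor:change_order} to account for the extra $q$ when the $y$'s are moved across $x_s$. The only difference is direction: the paper starts from the left-hand side, expands $(qx_s;q)_m$ as the inner $y$-integral over the region $D$ with constraints $\{qx_s\le y_i\}$, and then swaps to put the $y$'s outermost, turning $D$ into $D'=\order(Q)$; you run the same swap backwards. A cosmetic point: the paper writes all the transitive constraints $qx_s\le y_i$ for $i\in[m]$ rather than just the covering relation $qx_s\le y_1$, but after the shift these define the same region, so your minimal-family bookkeeping is equally valid.
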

\begin{proof}
By Corollary~\ref{cor:ab}, we have
\[
\int_{qx_s\le y_1\le \cdots \le y_m\le 1} d_q y_1\cdots d_q y_m=
\frac{(qx_s;q)_m}{[m]_q!}.
\]  
Thus, the left hand side of the equation can be written as
\begin{align*}
&[m]_q!\int_{\order(P)} \left(
\int_{qx_s\le y_1\le \cdots \le y_m\le 1} d_q y_1\cdots d_q y_m\right)
f(x_1,\dots,x_n) \dqx\\
=& [m]_q!\int_{D} f(x_1,\dots,x_n)d_q y_1\cdots d_q y_m\dqx,
\end{align*}
where $D$ is the set of inequalities given by
\[
D = \{0\le x_i\le x_j\le 1: x_i\le_P x_j\} 
\cup\{0\le y_1\le \cdots\le y_m\le 1\}
\cup \{qx_s\le y_i : i\in [m]\}.
\]
By Corollary~\ref{cor:change_order}, we have
\[
\int_{D} f(x_1,\dots,x_n)d_q y_1\cdots d_q y_m\dqx
=
\int_{D'} f(x_1,\dots,x_n)\dqx d_q y_1\cdots d_q y_m,
\]
where
\[
D' = \{0\le x_i\le x_j\le 1: x_i\le_P x_j\} 
\cup\{0\le y_1\le \cdots\le y_m\le 1\}
\cup \{x_s\le y_i : i\in [m]\}.
\]
Since $D'$ and $\order(Q)$ represent the same domain, we get the
lemma. 
\end{proof}

Now we state but do not prove the attaching chain lemma, since the
proof is similar to the previous proofs. 
See the right figure in Figure~\ref{fig:catlemma}.

\begin{lem} [Attaching chain lemma]
\label{lem:xyx}
  Let $\rho\in S_m$.  Let $P$ be a poset on $\{x_1,\dots,x_n\}$ with
  $x_s \le_P x_t$.  Define $Q$ to be the poset on
  $\{x_1,\dots,x_n,y_1,\dots,y_m\}$ with relations
  $x_i\le_{Q} x_j$ if and only if $x_i\le_{P} x_j$, and
  $x_{s} \le_{Q} y_{\rho_1} \le_{Q} \cdots \le_{Q}
  y_{\rho_m}\le_{Q} x_{t}$.  Then, we have
  \begin{multline}
    \label{eq:xyx}
\int_{\order(P)} 
q^{\maj(\rho)} x_t^m (q^{-\des(\rho)} x_{s}/x_{t};q)_m
f(x_1,\dots,x_n)\dqx\\
=[m]_q!\int_{\order(Q)} 
f(x_1,\dots,x_n) \DQ{y}{m} \dqx,
  \end{multline}
  \begin{multline}
    \label{eq:xyx''}
\int_{\order(P)} 
q^{\maj(\rho)} x_t^m (q^{1-\des(\rho)} x_{s}/x_{t};q)_m
f(x_1,\dots,x_n)\dqx\\
=[m]_q!\int_{\order(Q)} 
f(x_1,\dots,x_n) 
d_qx_1\cdots d_q x_s d_qy_1\cdots d_q y_m
d_qx_{s+1}\cdots d_q x_n.
  \end{multline}
\end{lem}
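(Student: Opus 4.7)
The plan is to follow the proofs of Lemmas~\ref{lem:meancat} and \ref{lem:happycat} essentially verbatim, using Corollary~\ref{cor:ab} to supply the closed form of the inner $q$-integral over the attached chain, and using Corollary~\ref{cor:change_order} to handle the rearranged order of integration that appears in \eqref{eq:xyx''}.

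For \eqref{eq:xyx}, the order of integration $\DQ{y}{m}\dqx$ on the right side places all $y$-variables innermost, so once $x_1,\dots,x_n$ are fixed the inner integral becomes
\[
\int_{x_s\le y_{\rho_1}\le\cdots\le y_{\rho_m}\le x_t}d_qy_1\cdots d_qy_m.
\]
Corollary~\ref{cor:ab}, applied with $a=x_s$, $b=x_t$, and $\pi=\rho$, evaluates this to $\frac{x_t^m q^{\maj(\rho)}}{[m]_q!}(q^{-\des(\rho)}x_s/x_t;q)_m$. Multiplying by $[m]_q!$ and absorbing the remaining outer $\dqx$ integral into $\int_{\order(P)}$ yields the LHS of \eqref{eq:xyx}.

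For \eqref{eq:xyx''}, the $y$-variables are integrated between $d_qx_s$ and $d_qx_{s+1}$, so I would first apply Corollary~\ref{cor:change_order} to permute the order of integration back to the form of \eqref{eq:xyx}. In the natural setting $s<t$, a position-by-position comparison shows that the only defining inequality of $\order(Q)$ whose relative order of integration flips is $x_s\le y_{\rho_1}$: in the order of \eqref{eq:xyx''} the variable $x_s$ is inner to $y_{\rho_1}$, while in the order of \eqref{eq:xyx} it is outer. Corollary~\ref{cor:change_order} therefore replaces this inequality by $qx_s\le y_{\rho_1}$. The argument for \eqref{eq:xyx} then applies verbatim with $a=qx_s$ in Corollary~\ref{cor:ab}, producing the shifted factor $(q^{1-\des(\rho)}x_s/x_t;q)_m$.

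The main obstacle is the bookkeeping at the second step: one must check that no other defining inequality of $\order(Q)$ --- in particular $y_{\rho_m}\le x_t$ and the inherited poset relations among the $x_i$'s --- changes its relative order of integration between the two orderings, so that no further modifications beyond the one to $x_s\le y_{\rho_1}$ are introduced by Corollary~\ref{cor:change_order}.
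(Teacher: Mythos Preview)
Your proposal is correct and matches the approach the paper intends: the paper omits the proof, stating only that ``the proof is similar to the previous proofs,'' i.e., to those of Lemmas~\ref{lem:meancat} and~\ref{lem:happycat}, which is exactly what you do. One small clarification on your final paragraph: if you describe $\order(Q)$ using \emph{all} poset relations rather than only the covering relations, then every inequality $x_s\le y_{\rho_i}$ (not just $i=1$) flips between the two orders; however, all of them are shifted to $qx_s\le y_{\rho_i}$ by Corollary~\ref{cor:change_order}, and these redundant constraints still cut out the same simplex $\{qx_s\le y_{\rho_1}\le\cdots\le y_{\rho_m}\le x_t\}$, so your conclusion is unaffected.
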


We note that \eqref{eq:xyx''} holds if the order of integration on the
right hand side is obtained from $d_qx_1\cdots d_q x_n$ by inserting
$d_qy_1\cdots d_q y_m$ anywhere between $x_s$ and $x_t$.

There is another way of attaching a chain. We need a
definition which is often called the interlacing condition. 

\begin{defn}
For two sets of variables $x=(x_1,x_2,\dots,x_{n-1})$ and
$y=(y_1,y_2,\dots,y_{n})$ with $y$ having one more variable than $x$,
we denote by $x\prec y$ the set of inequalities
\[
\{ y_1 \le x_1 \le y_2 \le x_2\le \cdots \le y_{n-1}
\le x_{n-1} \le y_{n}\}.
\]
Moreover, if $x=(x_1,x_2,\dots,x_{n})$ and $y=(y_1,y_2,\dots,y_{n})$
have the same number of variables, we also denote $x\prec y$ the
set of inequalities
\[
\{  x_1 \le y_1 \le x_2 \le y_2\le \cdots 
\le x_{n} \le y_{n}\}.
\]
See Figure~\ref{fig:x<y}. 
\end{defn}

\begin{figure}
  \centering
\includegraphics{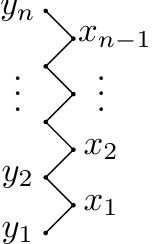} \qquad \qquad \qquad
\includegraphics{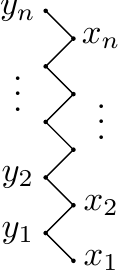} 
\caption{Illustrations of $x\prec y$ for $x=(x_1,\dots,x_{n-1})$,
  $y=(y_1,\dots,y_n)$ on the left and $x\prec y$ for
  $x=(x_1,\dots,x_{n})$, $y=(y_1,\dots,y_n)$ on the right.}
\label{fig:x<y}
\end{figure}

\begin{lem}[Interlacing chain lemma 1]
\label{lem:interlacing1}
Let $P$ be a poset on $\{x_1,\dots,x_N\}$. Suppose that $y=(y_1<_P
\cdots <_P y_n)$ is a chain in $P$. Let $Q$ be the poset obtained
from $P$ by adding a new chain $z=(z_1< \cdots < z_{n-1})$ which
interlaces with $y$ as follows:
\[
y_1<_Q z_1<_Q y_2<_Q z_2 <_Q\cdots <_Q y_{n-1} <_Q z_{n-1} <_Q y_n.
\]
Then, for a partition $\lambda$ with $\ell(\lambda)< n$, we have
\begin{multline*}
\int_{\order(P)}
s_{\lambda}(y)\DeltaBar(y) f(x) d_qx_1\cdots d_q x_{N}
\\=\prod_{i=1}^{n-1} [\lambda_i+n-i]_q
\int_{\order(Q)} s_{\lambda}(z)\DeltaBar(z)  f(x)
d_qz_1\cdots d_q z_{n-1} d_q x_1 \cdots d_q x_{N}. 
\end{multline*}
\end{lem}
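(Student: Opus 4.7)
The plan is to carry out the $z$-integration on the right-hand side first, since the $z$-variables appear innermost in the order of integration, and then to match what remains with the left-hand side. Each bound $y_i \le z_i \le y_{i+1}$ depends only on $y$ (not on other $z$'s) and consecutive intervals meet only at the endpoints $y_2,\dots,y_{n-1}$, so iterating Proposition~\ref{prop:fubini1} shows that the order among the $z$-integrals is immaterial. Since the projection of $\order(Q)$ onto the $x$-coordinates is exactly $\order(P)$, the lemma reduces to the pointwise identity
\[
\int_{y_1 \le z_1 \le y_2 \le \cdots \le y_{n-1} \le z_{n-1} \le y_n} s_\lambda(z)\DeltaBar(z)\, d_q z_1 \cdots d_q z_{n-1} = \frac{s_\lambda(y)\DeltaBar(y)}{\prod_{i=1}^{n-1}[\lambda_i+n-i]_q}.
\]

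To verify this, I would expand the Schur function using its alternant definition, writing
\[
s_\lambda(z)\DeltaBar(z) = (-1)^{\binom{n-1}{2}} a_{\lambda+\delta_{n-1}}(z) = (-1)^{\binom{n-1}{2}} \sum_{\sigma \in S_{n-1}} \sgn(\sigma) \prod_{j=1}^{n-1} z_j^{\lambda_{\sigma(j)}+n-1-\sigma(j)}.
\]
Evaluating each one-dimensional $q$-integral via $\int_{y_j}^{y_{j+1}} z_j^c\, d_q z_j = (y_{j+1}^{c+1}-y_j^{c+1})/[c+1]_q$, and setting $\alpha_i := \lambda_i + n - i$, every denominator is $[\alpha_{\sigma(j)}]_q$; the product over $j$ is $\prod_{i=1}^{n-1}[\alpha_i]_q$, independent of $\sigma$, while the remaining numerator sum is exactly $\det(y_{j+1}^{\alpha_i} - y_j^{\alpha_i})_{i,j=1}^{n-1}$.

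The decisive step is the determinant identity
\[
\det(y_{j+1}^{\alpha_i} - y_j^{\alpha_i})_{i,j=1}^{n-1} = (-1)^{n-1} a_{\lambda+\delta_n}(y).
\]
The hypothesis $\ell(\lambda) < n$ forces $\lambda_n = 0$, so $\alpha_n = 0$ and the last row of the $n \times n$ alternant matrix $(y_j^{\alpha_i})$ is all ones. Performing the column operations $C_j \leftarrow C_j - C_{j-1}$ for $j = n, n-1, \ldots, 2$ converts that last row into $(1, 0, \ldots, 0)$ without changing the determinant, and Laplace expansion along it produces the claimed formula. Combining with $s_\lambda(y)\DeltaBar(y) = (-1)^{\binom{n}{2}} a_{\lambda+\delta_n}(y)$, the cumulative sign exponent is $\binom{n-1}{2}+(n-1)+\binom{n}{2} = n(n-1)$, which is even, so the displayed reduction above holds. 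Multiplying by $\prod_{i=1}^{n-1}[\alpha_i]_q$ and by $f(x)$, then integrating over $\order(P)$ in the order $d_q x_1 \cdots d_q x_N$, reproduces the left-hand side of the lemma.

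The main obstacle I anticipate is the sign bookkeeping: three $(-1)^{\binom{*}{2}}$ contributions (from converting $s_\lambda\DeltaBar$ to the alternant on both $z$ and $y$) together with the $(-1)^{n-1}$ from Laplace expansion must conspire to give $+1$. Nothing is conceptually deep there, but a miscount would flip the answer. The conceptual crux is that the forced zero $\lambda_n = 0$ lets the alternant in the shorter variable set $z$ be promoted to an alternant in $y$ via column differences, which is precisely the effect of $q$-integrating out the interlacing $z$-chain.
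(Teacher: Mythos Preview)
Your proposal is correct and follows essentially the same route as the paper: the paper isolates your displayed pointwise identity as Lemma~\ref{lem:schur1} and proves it by exactly the determinant-and-row-reduction argument you describe (the paper subtracts adjacent rows where you subtract adjacent columns, which is the transposed version of the same computation). Your reduction of the lemma to that pointwise identity via innermost $z$-integration is precisely how the paper invokes Lemma~\ref{lem:schur1}.
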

\begin{proof}
This follows from Lemma~\ref{lem:schur1} below.  
\end{proof}

\begin{lem}[Interlacing chain lemma 2]
\label{lem:interlacing2}
Let $P$ be a poset on $\{x_1,\dots,x_N\}$. Suppose that $y=(y_1<_P
\cdots <_P y_n)$ is a chain in $P$. Let $Q$ be the poset obtained
from $P$ by adding a new chain $z=(z_1< \cdots < z_{n})$ which
interlaces with $y$  as follows:
\[
z_1<_Q y_1 <_Q z_2<_Q y_2 <_Q\cdots <_Q z_{n} <_Q y_n.
\]
Then, for a partition $\lambda$ with $\ell(\lambda) = n$, we have
\begin{multline*}
\int_{\order(P)}
s_{\lambda}(y)\DeltaBar(y)  f(x)
d_qx_1\cdots d_q x_{N}
\\=\prod_{i=1}^{n} [\lambda_i+n-i]_q
\int_{\order(Q)} s_{\lambda-(1^n)}(z)\DeltaBar(z)
 f(x) d_qz_1\cdots d_q z_{n} d_q x_1 \cdots d_q x_{N}. 
\end{multline*}
\end{lem}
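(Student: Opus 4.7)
The plan is to mirror the strategy of Lemma~\ref{lem:interlacing1}: carry out the $z_1,\dots,z_n$ integrations first (they are the innermost on the right-hand side), treating the $y$'s as fixed parameters since they sit among the $x_i$'s whose integrations are still pending.  This reduces the whole lemma to a single pointwise Schur integration identity.  Specifically, the claim to prove is that for fixed $0\le y_1\le \cdots\le y_n\le 1$,
\[
\int_{0\le z_1\le y_1\le z_2\le y_2\le \cdots \le z_n\le y_n} s_{\lambda-(1^n)}(z)\DeltaBar(z)\, d_q z_1\cdots d_q z_n = \frac{s_\lambda(y)\DeltaBar(y)}{\prod_{i=1}^{n}[\lambda_i+n-i]_q}.
\]
Once this is in hand, substituting into the right-hand side and recognizing the decomposition of $\order(Q)$ as $\order(P)$ (in the $x$-variables) times the interlacing slice (in the $z$-variables) produces the left-hand side of the lemma divided by $\prod_i[\lambda_i+n-i]_q$; the prefactor cancels this.

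To prove the pointwise identity, I would first rewrite both sides via the Jacobi bialternant formula
\[
s_\mu(v)\DeltaBar(v) = (-1)^{\binom{n}{2}}\det\bigl(v_j^{\mu_i+n-i}\bigr)_{i,j=1}^{n},
\]
applied with $(\mu,v)=(\lambda-(1^n),z)$ on the left and $(\mu,v)=(\lambda,y)$ on the right; the hypothesis $\ell(\lambda)=n$ guarantees that $\lambda-(1^n)$ is again a partition and that each $[\lambda_i+n-i]_q$ is nonzero.  Next, because the interlacing constraints place each $z_j$ in an interval $[y_{j-1},y_j]$ (with $y_0:=0$) whose endpoints are independent of the remaining $z_k$'s, Proposition~\ref{prop:fubini1} (iterated) applies: expand the determinant by its definition, factor the $z$-integration as a product of one-variable integrals via $\int_a^b x^k\,d_qx=(b^{k+1}-a^{k+1})/[k+1]_q$, and recollect to obtain
\[
(-1)^{\binom{n}{2}}\prod_{i=1}^{n}\frac{1}{[\lambda_i+n-i]_q}\,\det\bigl(y_j^{\lambda_i+n-i}-y_{j-1}^{\lambda_i+n-i}\bigr)_{i,j=1}^{n}.
\]
A telescoping column operation (successively replace column $j$ by the sum of columns $1$ through $j$) then collapses this determinant to $\det(y_j^{\lambda_i+n-i})_{i,j}$, using $y_0=0$; applying the bialternant formula once more identifies it with $(-1)^{\binom{n}{2}}s_\lambda(y)\DeltaBar(y)$, and the two sign factors cancel.

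The only delicate step is the exchange of the $z$-integrations with the determinantal expansion, i.e.\ the application of Fubini for the multiple $q$-integral.  This is exactly where the shape of the interlacing matters: because the bounds $y_{j-1}\le z_j\le y_j$ involve only the outer variables $y$, each $z_j$ sits in an interval independent of the other $z_k$'s, placing us squarely in the setting of Proposition~\ref{prop:fubini1} rather than the failure regime of Proposition~\ref{prop:switch}.  Everything else --- the alternant--determinant identifications, the one-variable $q$-integral evaluation, the column telescoping, and the outer integration over $\order(P)$ against $f(x)$ --- is routine.
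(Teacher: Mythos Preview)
Your argument is correct and lands on the same pointwise identity that the paper isolates as Lemma~\ref{lem:schur2}: for fixed $y$,
\[
s_\lambda(y)\DeltaBar(y)=\prod_{i=1}^{n}[\lambda_i+n-i]_q\int_{z\prec y} s_{\lambda-(1^n)}(z)\DeltaBar(z)\,d_qz_1\cdots d_qz_n,
\]
after which the outer $\order(P)$-integration is immediate.  The difference is in how that identity is established.  The paper does not redo the determinant calculation; instead it prepends a zero, setting $u=(0,y_1,\dots,y_n)$, observes that $s_{\lambda-(1^n)}(u)\DeltaBar(u)=s_\lambda(y)\DeltaBar(y)$, and invokes Lemma~\ref{lem:schur1} with $n+1$ variables and the partition $\lambda-(1^n)$ of length $\le n$.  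Your route instead reproves the identity from scratch, running the same column-by-column $q$-integration and telescoping column reduction that the paper used for Lemma~\ref{lem:schur1}, but now exploiting $y_0=0$ to collapse $\det(y_j^{\lambda_i+n-i}-y_{j-1}^{\lambda_i+n-i})$ directly to $\det(y_j^{\lambda_i+n-i})$.  Both are short and equivalent in spirit; the paper's reduction is marginally more economical since it reuses Lemma~\ref{lem:schur1} verbatim, while your direct argument is self-contained and makes the role of the lower bound $z_1\ge 0$ (i.e.\ $y_0=0$) explicit.
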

\begin{proof}
This follows from Lemma~\ref{lem:schur2} below.  
\end{proof}

The following lemma is a special case of 
\cite[Theorem 1]{Okounkov1998}, which has Macdonald polynomials
instead of Schur functions. 

\begin{lem}
\label{lem:schur1}
  Let $z=(z_1,z_2,\dots,z_{n-1})$ and $y=(y_1,y_2,\dots,y_{n})$.
If $\ell(\lambda)< n$,  we have
  \[
s_\lambda(y) \DeltaBar(y)
=\prod_{i=1}^{n-1} [\lambda_i+n-i]_q
\int_{z\prec y} s_\lambda(z) \DeltaBar(z) 
d_q z_1 \cdots d_q z_{n-1}. 
\]
\end{lem}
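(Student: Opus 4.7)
The plan is to reduce both sides to alternants, integrate the alternant on the right componentwise using one-variable $q$-integrals, and finish with a determinant manipulation.

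First I would use the definition of the Schur function together with $\DeltaBar(x) = (-1)^{\binom{m}{2}} a_{\delta_m}(x)$ to write
\[
s_\lambda(z)\DeltaBar(z) = (-1)^{\binom{n-1}{2}} a_{\lambda+\delta_{n-1}}(z), \qquad s_\lambda(y)\DeltaBar(y) = (-1)^{\binom{n}{2}} a_{\lambda+\delta_n}(y).
\]
Here the hypothesis $\ell(\lambda) < n$ is essential: it allows me to extend $\lambda$ to an $n$-tuple by setting $\lambda_n = 0$, so that the last row of $a_{\lambda+\delta_n}(y)$ becomes $(1,1,\dots,1)$.

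Next, observe that the region $z \prec y$ decomposes as the product of intervals $z_j \in [y_j, y_{j+1}]$ for $j = 1, \dots, n-1$, with no cross-constraints between the $z_j$'s. Hence Proposition~\ref{prop:fubini1} (iterated) lets me integrate in any order. Expanding $a_{\lambda+\delta_{n-1}}(z) = \det(z_j^{\lambda_i+n-1-i})_{i,j=1}^{n-1}$ as a signed sum of monomials and integrating each monomial over its product region, the one-variable formula $\int_a^b x^k\, d_q x = (b^{k+1}-a^{k+1})/[k+1]_q$ reassembles the result into a determinant
\[
\int_{z \prec y} a_{\lambda+\delta_{n-1}}(z)\, d_q z_1 \cdots d_q z_{n-1}
= \det\!\left( \frac{y_{j+1}^{\lambda_i+n-i} - y_j^{\lambda_i+n-i}}{[\lambda_i+n-i]_q}\right)_{\!i,j=1}^{n-1}.
\]
Pulling the factor $\prod_{i=1}^{n-1} [\lambda_i+n-i]_q^{-1}$ out of the rows then reduces the proof to the determinantal identity
\[
\det\bigl(y_{j+1}^{\lambda_i+n-i} - y_j^{\lambda_i+n-i}\bigr)_{i,j=1}^{n-1}
= (-1)^{n-1}\, a_{\lambda+\delta_n}(y).
\]

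To establish this identity I start from the $n \times n$ alternant $a_{\lambda+\delta_n}(y) = \det(y_j^{\lambda_i+n-i})_{i,j=1}^n$, whose last row is $(1,\dots,1)$. Performing the column operations $C_j \leftarrow C_j - C_{j-1}$ for $j = n, n-1, \dots, 2$ preserves the determinant, leaves the first column as $(y_1^{\lambda_i+n-i})$ with last entry $1$, and turns every later column into $(y_j^{\lambda_i+n-i}-y_{j-1}^{\lambda_i+n-i})$ with last entry $0$. Expanding along the last row yields $(-1)^{n+1}$ times precisely the $(n-1)\times(n-1)$ determinant above, which gives the identity. Finally, assembling the signs, $\binom{n-1}{2} + (n-1) = \binom{n}{2}$, the $(-1)$'s from the two alternant rewrites and from the column operation cancel exactly, yielding the stated formula. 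The main obstacle, though modest, is this sign bookkeeping together with the column-operation step; everything else follows directly from Fubini and the one-variable $q$-integral formula.
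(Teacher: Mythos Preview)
Your proposal is correct and follows essentially the same approach as the paper. The only cosmetic difference is that the paper writes the alternant as $\det(y_i^{\lambda_j+n-j})$ (the transpose of your matrix), so it has a last \emph{column} of $1$'s and performs \emph{row} operations $R_{i+1}\leftarrow R_{i+1}-R_i$ before expanding along that column; you work with the definition's orientation $\det(y_j^{\lambda_i+n-i})$, obtain a last \emph{row} of $1$'s, and do the corresponding column operations---an immaterial transposition.
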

\begin{proof} We note that the integrand on the right side is a 
determinant. Applying the $q$-integrals gives another determinant, which is the left side. Specifically,
$$
s_\lambda(z_1,\dots,z_{n-1}) \DeltaBar(z)=(-1)^{\binom{n-1}{2}}
\det\left( z_i^{\lambda_j+n-1-j}\right)_{1\le i,j \le n-1}.
$$
Evaluating the $q$-integral on $z_i$ from $y_{i}$ to $y_{i+1}$ evaluates the 
right side of Lemma~\ref{lem:schur1} as 
$$
(-1)^{\binom{n-1}{2}} 
\det\left( y_{i+1}^{\lambda_j+n-j}-y_{i}^{\lambda_j+n-j}\right)_{1\le i,j \le n-1}.
$$

For the left side, 
$$
s_\lambda(y_1,\dots,y_{n}) \DeltaBar(y)
= (-1)^{\binom{n}{2}} \det\left( y_{i}^{\lambda_j+n-j}\right)_{1\le i,j \le n}
$$
has an $n^{th}$ column of all $1$'s because $\lambda_n=0$. 
By successively subtracting 
the $i^{th}$ row from the $i+1^{th}$ row for $i=n-1$ to $i=1$ 
we obtain a final column of $(1,0,\dots,0)^T$. Expanding the 
determinant along this column gives 
$$
\begin{aligned}
s_\lambda(y_1,\dots,y_{n}) \DeltaBar(y)=&
(-1)^{\binom{n}{2}}(-1)^{n-1}
\det\left( y_{i+1}^{\lambda_j+n-j}-y_{i}^{\lambda_j+n-j}\right)_{1\le i,j \le n-1}\\
=& (-1)^{\binom{n-1}{2}}
\det\left( y_{i+1}^{\lambda_j+n-j}-y_{i}^{\lambda_j+n-j}\right)_{1\le i,j \le n-1}
\end{aligned}
$$
which is the right side.
\end{proof}

The version of Lemma~\ref{lem:schur1} when $\lambda_n>0$ is next.

\begin{lem}
\label{lem:schur2}
  Let $z=(z_1,z_2,\dots,z_{n})$ and $y=(y_1,y_2,\dots,y_{n})$.
If $\ell(\lambda) = n$,  we have
  \[
s_\lambda(y) \DeltaBar(y)
=\prod_{i=1}^{n} [\lambda_i+n-i]_q
\int_{z\prec y} s_{\lambda-(1^n)}(z) \DeltaBar(z) 
d_q z_1 \cdots d_q z_{n}. 
\]
\end{lem}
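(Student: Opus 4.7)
The plan is to follow the same determinant-manipulation template used for Lemma~\ref{lem:schur1}, adapted to the fact that $\ell(\lambda)=n$ and that the chain $z$ now has the same number of variables as $y$. First I would rewrite the integrand as a single determinant,
\[
s_{\lambda-(1^n)}(z)\DeltaBar(z) = (-1)^{\binom{n}{2}} \det\left( z_j^{\lambda_i+n-i-1}\right)_{1 \le i,j \le n},
\]
noting that $\ell(\lambda)=n$ forces $\lambda_i\ge 1$, so every exponent $\lambda_i+n-i-1$ is $\ge 0$ and the $q$-integrals in the next step are honest integrals of monomials.

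Next, I would decouple the interlacing region: the constraints $z_1\le y_1\le z_2\le y_2\le\cdots\le z_n\le y_n$ pin each $z_j$ independently to the interval $[y_{j-1},y_j]$, using the convention $y_0=0$. Since column $j$ of the determinant depends only on $z_j$, multilinearity in columns lets me move the $n$-fold $q$-integral inside, producing a determinant whose $(i,j)$ entry is
\[
\int_{y_{j-1}}^{y_j} z_j^{\lambda_i+n-i-1}\, d_q z_j
= \frac{y_j^{\lambda_i+n-i}-y_{j-1}^{\lambda_i+n-i}}{[\lambda_i+n-i]_q}.
\]
Pulling $1/[\lambda_i+n-i]_q$ out of row $i$ then produces the product $\prod_{i=1}^n [\lambda_i+n-i]_q^{-1}$ that appears in the statement of the lemma.

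To finish I would show that the residual determinant $\det\bigl(y_j^{\lambda_i+n-i}-y_{j-1}^{\lambda_i+n-i}\bigr)_{i,j}$ equals the alternant $\det\bigl(y_j^{\lambda_i+n-i}\bigr)_{i,j}=(-1)^{\binom{n}{2}}s_\lambda(y)\DeltaBar(y)$. This is a short column-operation argument: performing $c_j\to c_j+c_{j-1}$ successively from $j=2$ to $j=n$ telescopes the $(i,j)$ entry into $y_j^{\lambda_i+n-i}-y_0^{\lambda_i+n-i}=y_j^{\lambda_i+n-i}$, while leaving column~$1$ equal to $y_1^{\lambda_i+n-i}$, and these operations preserve the determinant. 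The only subtle point, and not really an obstacle, is the vanishing $y_0^{\lambda_i+n-i}=0$, which requires $\lambda_i+n-i\ge 1$ for every $i$; this is guaranteed precisely by the hypothesis $\lambda_n\ge 1$ encoded in $\ell(\lambda)=n$, and it is the one place where the present lemma really differs from Lemma~\ref{lem:schur1}.
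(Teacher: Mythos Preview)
Your argument is correct. It differs from the paper's proof in that you rerun the determinant computation from scratch, while the paper reduces Lemma~\ref{lem:schur2} to Lemma~\ref{lem:schur1} by a single trick: set $u=(u_1,\dots,u_{n+1})=(0,y_1,\dots,y_n)$ and apply Lemma~\ref{lem:schur1} to the partition $\lambda-(1^n)$ of length at most $n$ in $n+1$ variables. The interlacing region $z\prec u$ coincides with $z\prec y$ (with the implicit lower bound $z_1\ge 0$), the constants $[\,(\lambda_i-1)+(n+1)-i\,]_q$ match $[\lambda_i+n-i]_q$, and the identities $s_{\lambda-(1^n)}(0,y_1,\dots,y_n)=s_{\lambda-(1^n)}(y)=s_\lambda(y)/(y_1\cdots y_n)$ together with $\DeltaBar(u)=y_1\cdots y_n\,\DeltaBar(y)$ finish the job. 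Your direct approach has the virtue of making the role of the hypothesis $\ell(\lambda)=n$ transparent---it is exactly what forces $y_0^{\lambda_i+n-i}=0$ in your telescoping column operations---while the paper's reduction is shorter and avoids repeating the row/column manipulations already done in Lemma~\ref{lem:schur1}.
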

\begin{proof}
  Let $u = (u_1,u_2,\dots,u_{n+1})=(0,y_1,y_2,\dots,y_n)$.  By
  Lemma~\ref{lem:schur1}, we have
  \[
s_{\lambda-(1^n)}(u_1,\dots,u_{n+1}) \DeltaBar(u)
=\prod_{i=1}^{n} [(\lambda_i-1)+(n+1)-i]_q
\int_{z\prec u} s_{\lambda-(1^n)}(z_1,\dots,z_{n}) \DeltaBar(z) 
d_q z_1 \cdots d_q z_{n}. 
\]
Since 
\[
s_{\lambda-(1^n)}(u_1,\dots,u_{n+1}) 
=s_{\lambda-(1^n)}(y_1,\dots,y_{n})
=s_{\lambda}(y_1,\dots,y_{n})/y_1\cdots y_n
\]
and
\[
\DeltaBar(u) = y_1\cdots y_n \DeltaBar(y)
\]
we are done.
\end{proof}

\section{Examples of $q$-integrals}
\label{sec:examples}

In this section we use the constructions in
Section~\ref{sec:q-integrals-over} to evaluate the $q$-integrals.
This includes the $q$-beta integral, a $q$-analogue of Dirichlet's
integral, and a general $q$-beta integral due to Andrews and Askey
\cite{AndrewsAskey1981}.  We will then find a connection with linear
extensions of forest posets.
 
\subsection{The $q$-beta integral}

The following is the well known integral called the $q$-beta integral.
We now prove this using our methods. The idea is to add two chains to
a point: one chain below it and the other chain above it, naturally labeled. 
The resulting poset is again a chain whose $q$-volume is easily computed. 

\begin{cor}
\label{cor:firstqbeta}
We have
\[
\int_{0}^1 x^n (xq;q)_m d_qx  =\frac{[n]_q![m]_q!}{[n+m+1]_q!}.
\]  
\end{cor}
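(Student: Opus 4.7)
The plan is to realize the integrand $x^n(qx;q)_m$ as the effect of two successive poset modifications applied to the one-element poset $P_0=\{x\}$, whose order polytope is just $[0,1]$. Using the Happy Cat Lemma to absorb the factor $(qx;q)_m$ and the Scaredy Cat Lemma to absorb the factor $x^n$, the problem reduces to computing the $q$-volume of a chain, which is given by Corollary~\ref{cor:maj}.

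First I would rewrite the left-hand side as $\int_{\order(P_0)} x^n(qx;q)_m\, d_q x$ and apply Lemma~\ref{lem:happycat} with $f(x)=x^n$ and chain length $m$. Attaching a chain $y_1\le\cdots\le y_m$ above $x$ yields the poset $Q_0$ on $\{x,y_1,\dots,y_m\}$ with $x\le y_1\le\cdots\le y_m$, and
\[
\int_0^1 x^n(qx;q)_m\, d_q x = [m]_q!\int_{\order(Q_0)} x^n\, d_q x\, d_q y_1\cdots d_q y_m.
\]

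Next I would apply Lemma~\ref{lem:meancat} to $Q_0$, taking the distinguished element to be $x$, $f\equiv 1$, and chain length $n$. Attaching a chain $z_1\le\cdots\le z_n$ below $x$ produces the poset $Q_1$ on $\{z_1,\dots,z_n,x,y_1,\dots,y_m\}$ whose Hasse diagram is the single chain $z_1\le\cdots\le z_n\le x\le y_1\le\cdots\le y_m$, so
\[
\int_{\order(Q_0)} x^n\, d_q x\, d_q y_1\cdots d_q y_m = [n]_q!\int_{\order(Q_1)} d_q z_1\cdots d_q z_n\, d_q x\, d_q y_1\cdots d_q y_m.
\]
Combining the two identities gives $\int_0^1 x^n(qx;q)_m\, d_q x = [n]_q!\,[m]_q!\,V_q(\order(Q_1))$ for a suitable order of integration.

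Finally, note that the order of integration inherited from the two lemmas reads the variables $z_1,\dots,z_n,x,y_1,\dots,y_m$ exactly along the chain $Q_1$, so the induced labeling is natural and the unique linear extension has $\maj=0$. Corollary~\ref{cor:maj} then evaluates $V_q(\order(Q_1))=1/[n+m+1]_q!$, yielding the asserted value. The only point needing care, and the main thing to verify in the write-up, is the compatibility of the integration order produced by the two lemmas with the natural labeling of the resulting chain; without this compatibility an extraneous $q^{\maj}$ factor would appear. Since the two lemmas insert the new differentials respectively before the old ones (Scaredy Cat) and after them (Happy Cat), the chain ends up naturally labeled and no such factor appears.
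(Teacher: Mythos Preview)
Your proof is correct and follows essentially the same approach as the paper: apply the Happy Cat and Scaredy Cat lemmas to turn $x^n(qx;q)_m$ into the $q$-volume of an $(n+m+1)$-element chain with its natural labeling, and then invoke Corollary~\ref{cor:maj}. Your explicit discussion of why the induced integration order matches the natural labeling of the chain (so that $\maj=0$) is a nice touch that the paper leaves implicit.
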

\begin{proof}
By \eqref{lem:happycat} and \eqref{lem:meancat}  we
have
\[
\int_{0}^1 x^n (xq;q)_m d_qx  =
[n]_q![m]_q! 
\int_{0\le y_1\le\cdots\le y_n\le x\le z_1\le\cdots\le z_m\le1}
d_q y_1 \cdots d_q y_n d_qx d_q z_1 \cdots d_q z_m.
\]
By Corollary~\ref{cor:maj}, we get the $q$-beta integral formula.
\end{proof}

\subsection{A $q$-analog of Dirichlet integral}

We now consider the simplex
\[
\Omega_n=\{(x_1,\dots,x_n)\in[0,1]^n: x_1+\cdots+x_n\le 1\}.
\]

Dirichlet's integral is the following, see
\cite[Theorem~1.8.6]{AAR_SP}:
\begin{equation}
  \label{eq:dirichlet}
\int_{\Omega_n} x_1^{\alpha_1-1}\cdots x_n^{\alpha_n-1} (1-x_1-\cdots -x_n)^{a_{n+1}-1}
dx_1\cdots dx_n
=\frac{\Gamma(\alpha_1)\cdots\Gamma(\alpha_{n+1})}
{\Gamma(\alpha_1+\dots+\alpha_{n+1})}.
\end{equation}

By introducing new variables $y_i=x_1+\cdots+x_i$ and integers
$k_i=\alpha_i-1$, we get an equivalent version of \eqref{eq:dirichlet}
\begin{multline}
  \label{eq:dirichlet'}
\int_{0\le y_1\le y_2\le \cdots\le y_n\le 1} y_1^{k_1}(y_2-y_1)^{k_2}\cdots 
(y_n-y_{n-1})^{k_n} (1-y_n)^{k_{n+1}}
dy_1\cdots dy_n\\
=\frac{k_1!\cdots k_{n+1}!}{(n+k_1+\cdots+k_{n+1})!}.
\end{multline}

A $q$-analogue of \eqref{eq:dirichlet'} is given by the next corollary.
It generalizes Corollary~\ref{cor:firstqbeta} which is the case $n=1$. 
  
\begin{cor}\label{cor:dirichlet2}
For nonnegative integers $k_1,\dots,k_{n+1}$, we have
\[
\int_{0\le y_1\le y_2\le \cdots\le y_{n}\le 1} 
y_1^{k_1}(qy_n;q)_{k_{n+1}}
\prod_{i=2}^n y_i^{k_i}(qy_{i-1}/y_i;q)_{k_i}
d_qy_1\cdots d_qy_{n}=
\frac{[k_1]_q!\cdots [k_{n+1}]_q!}{[n+k_1+\cdots+k_{n+1}]_q!}.
\]  
\end{cor}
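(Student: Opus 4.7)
The plan is to iteratively peel off the factors in the integrand by applying the chain-insertion lemmas of Section~\ref{sec:operations-posets} to the naturally labeled chain $P: y_1 \le y_2 \le \cdots \le y_n$, and then evaluate the resulting $q$-volume of a much longer chain using Corollary~\ref{cor:maj}. Concretely, starting from the left hand side, I would first apply the Scaredy Cat Lemma (Lemma~\ref{lem:meancat}) with $t=1$ to absorb the factor $y_1^{k_1}$, attaching a new naturally labeled chain of $k_1$ elements below $y_1$ and picking up a factor of $[k_1]_q!$.

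Next, for each $i=2,\dots,n$, I would apply the Attaching Chain Lemma (Lemma~\ref{lem:xyx}, formula \eqref{eq:xyx''}) with $\rho=\mathrm{id}\in S_{k_i}$, so that $\maj(\rho)=\des(\rho)=0$ and the integrand factor $y_i^{k_i}(qy_{i-1}/y_i;q)_{k_i}$ is exactly the weight it prescribes. This inserts a naturally labeled chain of $k_i$ new elements between $y_{i-1}$ and $y_i$, and contributes $[k_i]_q!$. Finally I would apply the Happy Cat Lemma (Lemma~\ref{lem:happycat}) with $s=n$ to absorb $(qy_n;q)_{k_{n+1}}$, attaching a chain of $k_{n+1}$ new elements above $y_n$ and producing a factor $[k_{n+1}]_q!$.

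After these $n+1$ reductions, the poset obtained is a single chain on $N := n + k_1 + \cdots + k_{n+1}$ elements, and the order of integration—thanks to the variant of \eqref{eq:xyx''} which allows $d_qy_1\cdots d_qy_m$ to be inserted anywhere between $x_s$ and $x_t$—can be arranged so that integration proceeds from the bottom of the chain to the top. In other words, the remaining $q$-integral is the $q$-volume $V_q(\order(P_{\mathrm{id}}))$ of the standard simplex with the identity ordering. By Corollary~\ref{cor:maj}, this equals $q^{\maj(\mathrm{id})}/[N]_q! = 1/[N]_q!$, and combining with the accumulated factor $[k_1]_q!\cdots[k_{n+1}]_q!$ gives precisely the stated product formula.

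The only delicate point, and likely the main bookkeeping obstacle, is verifying that the order of integration really does end up as that of a naturally labeled chain after all insertions: the Scaredy Cat Lemma prepends new variables, the Happy Cat Lemma appends them, and the Attaching Chain Lemma inserts them in the middle, so one must check (using the remark after Lemma~\ref{lem:xyx}) that these insertions can be performed consistently to yield a left-to-right natural labeling. Once this is done, no further identities are needed; the computation is driven entirely by Corollary~\ref{cor:maj} applied to $\pi = \mathrm{id}$.
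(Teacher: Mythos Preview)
Your proposal is correct and follows essentially the same approach as the paper: apply the Scaredy Cat Lemma once for $y_1^{k_1}$, the Attaching Chain Lemma \eqref{eq:xyx''} with $\rho=\mathrm{id}$ for each factor $y_i^{k_i}(qy_{i-1}/y_i;q)_{k_i}$, and the Happy Cat Lemma once for $(qy_n;q)_{k_{n+1}}$, then evaluate the resulting naturally labeled chain of length $N$ via Corollary~\ref{cor:maj}. The bookkeeping concern you flag about the order of integration is exactly the point addressed by the remark following Lemma~\ref{lem:xyx}, so your argument goes through as written.
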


\begin{proof}
We generalize the proof of Corollary~\ref{cor:firstqbeta}. 
We start with a chain $y_1\le y_2\le \cdots\le y_n.$
Attach a chain with $k_i$ elements between
$y_{i-1}$ and $y_i$, for $2\le i\le n$, and also a chain with 
$k_1$ elements below $y_1$, and a chain with 
$k_{n+1}$ elements above $y_n$. We obtain a chain with 
$n+k_1+\cdots+k_{n+1}$
elements which is naturally labeled.

By applying the scaredy cat lemma once, the second part \eqref{eq:xyx''}
in the attaching chain lemma with $\rho$ the identity permutation
$n-1$ times, and the happy cat lemma once, we obtain the left side
multiplied by some $q$-factorials. The right side results from
Corollary~\ref{cor:maj} with $\pi$ being the identity.
\end{proof}

\subsection{The general $q$-beta integral of Andrews and
Askey}

Andrews and Askey \cite{AndrewsAskey1981} generalized the $q$-beta
integral as follows: for $|q|<1$,
\begin{equation}
  \label{eq:andrews-askey}
\int_{a}^b
\frac{(qx/a;q)_\infty (qx/b;q)_\infty}
{(Ax/a;q)_\infty (Bx/b;q)_\infty} d_q x
=\frac{(1-q)(q;q)_\infty(AB;q)_\infty ab(a/b;q)_\infty(b/a;q)_\infty}
{(A;q)_\infty (B;q)_\infty (a-b) (Ba/b;q)_\infty (Ab/a;q)_\infty}.
\end{equation}

In this subsection, by computing the $q$-volume of a truncated simplex
in two different ways, we will show the following proposition which is
equivalent to the special case of \eqref{eq:andrews-askey} with
substitution
\[
(a,b,A,B)\mapsto(aq^{r-k_1},bq^{k_2+1}, q^{r+1}, q^{s+1}).
\]
\begin{prop}\label{prop:AA}
  Let $n,r,s,k_1,k_2$ be nonnegative integers such that $n=r+s+1$,
  $k_1\le r$, $k_2\le s$, and $k=k_1+k_2+1$ if $s\ge 1$ and $k=k_1$
  if $s=0$. Then 
\[
\int_a^b x^r (aq^{-k_1}/x;q)_{r} (xq^{-k_2}/b;q)_{s} d_qx\\
=\frac{[r]_q![s]_q!}{[n]_q!} b^{r+1} q^{(k-k_1)(r+1)} (aq^{-k}/b;q)_{n}.
\]
\end{prop}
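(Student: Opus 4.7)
The plan is to compute the $q$-volume of a particular truncated simplex in two different ways and equate them. Assume first that $k_1 \le r-1$ and (when $s \ge 1$) $k_2 \le s-1$, so that there exist $\sigma \in S_r$ with $\des(\sigma)=k_1$ and $\tau \in S_s$ with $\des(\tau)=k_2$. Set
\[
D = \{a \le y_{\sigma_1} \le \cdots \le y_{\sigma_r} \le x \le z_{\tau_1} \le \cdots \le z_{\tau_s} \le b\}
\]
and $V = \int_D d_qy_1\cdots d_qy_r\,d_qz_1\cdots d_qz_s\,d_qx$.

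For the iterated evaluation of $V$, with $x$ and the $z$'s held fixed the innermost $r$ integrals over the $y$-chain evaluate by Corollary~\ref{cor:ab} to $x^r q^{\maj(\sigma)}(aq^{-k_1}/x;q)_r/[r]_q!$; with $x$ held fixed the next $s$ integrals over the $z$-chain give $b^s q^{\maj(\tau)}(xq^{-k_2}/b;q)_s/[s]_q!$; and the outer $x$-integration over $[a,b]$ then produces
\[
V = \frac{q^{\maj(\sigma)+\maj(\tau)}\,b^s}{[r]_q!\,[s]_q!}\cdot \text{LHS}.
\]

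For the direct evaluation, label $y_i\mapsto i$, $z_j\mapsto r+j$, $x\mapsto n$; the integration order is then the identity and the chain corresponds to
\[
\pi = (\sigma_1,\ldots,\sigma_r,\,n,\,r+\tau_1,\ldots,r+\tau_s)\in S_n.
\]
Counting descents, using that position $r+1$ is a forced descent (since $\pi_{r+1}=n>r+\tau_1=\pi_{r+2}$ when $s\ge 1$) while position $r$ is not (since $\sigma_r\le r<n$), gives $\des(\pi)=\des(\sigma)+1+\des(\tau)=k$ for $s\ge 1$ and $\des(\pi)=k_1=k$ for $s=0$. Collecting the descent positions yields $\maj(\pi)=\maj(\sigma)+\maj(\tau)+(r+1)(k-k_1)$. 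Corollary~\ref{cor:ab} then gives $V=b^n q^{\maj(\pi)}(aq^{-k}/b;q)_n/[n]_q!$, and equating the two expressions for $V$ (using $b^{n-s}=b^{r+1}$ and cancellation of the $q^{\maj(\sigma)+\maj(\tau)}$ factors) produces the RHS.

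The main obstacle is the boundary regime $k_1=r$ or $k_2=s$, where no $\sigma\in S_r$ or $\tau\in S_s$ has the required descent count, and in the extreme case $k_1=r$, $k_2=s$ the target $k=n$ even exceeds the maximum $\des$ in $S_n$. Such cases can be handled either by working with a larger chain that admits an auxiliary variable to absorb the missing descent, or by relabeling so that a forced additional descent appears at one of the transitions $y_{\sigma_r}\to x$ or $x\to z_{\tau_1}$; the descent and major-index tallies must then be recomputed, but the same algebraic manipulation delivers the RHS.
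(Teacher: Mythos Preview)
Your argument is essentially the paper's own proof: the paper also picks $\pi\in S_n$, decomposes it as $\pi=\sigma\,n\,\tau$ with $\sigma\in S_r$ and $\tau\in S_s$, and computes $V_q(\order_{[a,b]^n}(P_\pi))$ in the two ways you describe. In particular the paper, like you, only directly obtains the identity for those $(r,s,k_1,k_2)$ that arise from some permutation, namely $k_1\le r-1$ when $r\ge1$ and $k_2\le s-1$ when $s\ge1$; the stated hypotheses $k_1\le r$, $k_2\le s$ are slightly looser than what either argument literally delivers.

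Your closing paragraph is where the proposal weakens. The fixes you sketch for the boundary regime are too vague to be checked, and in fact they cannot work in general: the identity is \emph{false} at $k_2=s$ when $s\ge1$. For instance, take $r=0$, $s=1$, $k_1=0$, $k_2=1$ (so $n=2$, $k=2$); at $a=0$, $b=1$ the left side equals $\int_0^1(1-x/q)\,d_qx=1-\tfrac{1}{q(1+q)}$, while the right side equals $\tfrac{q^2}{1+q}$, and these differ for generic $q$. (The case $k_1=r$ happens to be harmless for a structural reason: both sides depend on $(a,k_1)$ only through the combination $aq^{-k_1}$, so it reduces to $k_1=0$.) So rather than attempting to patch the boundary, you should simply record that the permutation construction covers exactly the range $k_1\le\max(r-1,0)$, $k_2\le\max(s-1,0)$---which is precisely what the paper's proof establishes as well.
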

\begin{proof}
Let $\pi\in S_n$.  By Corollary~\ref{cor:ab} the $q$-volume of the
truncated simplex $\order_{[a,b]^n}(P_\pi)$ is
\[
V_q(\order_{[a,b]^n}(P_\pi))=\int_{a\le x_{\pi_1} \le \dots\le x_{\pi_n}\le b} \dqx =
\frac{b^n q^{\maj(\pi)}}{[n]_q!} (aq^{-\des(\pi)}/b;q)_{n}.
\]  
Now we compute this $q$-volume in a different way by decomposing the
chain $x_{\pi_1} \le \dots\le x_{\pi_n}$ into two chains.

First we decompose $\pi$ into $\pi=\sigma n \tau$ using the largest
integer $n$. Suppose that $\sigma$ and $\tau$ have $r$ and $s$ letters
respectively and $\des(\sigma)=k_1$, $\des(\tau)=k_2$. Then $n=r+s+1$
and $k=k_1+k_2+1$ if $k_2\ge1$ and $k=k_1$ if $k_2=0$. The $q$-volume
of $\order_{[a,b]^n}(P_\pi)$ can be written as
\[
\int_a^b\left(
\int_{a\le x_{\sigma_1} \le \dots\le x_{\sigma_{r}}\le
  x_{n}} d_q y_1 \cdots d_q y_{r}
\int_{x_{n}\le x_{\tau_{1}} \le \dots\le
  x_{\tau_{s}} \le b} d_q z_1 \cdots d_q z_{s} 
\right) d_q x_n,
\]
where $y_1,\dots,y_{r}$ and $z_1,\dots,z_{s}$ are obtained by
rearranging $x_{\sigma_1},\dots,x_{\sigma_{r}}$ and
$x_{\tau_{1}},\dots,x_{\tau_{s}}$ respectively so that subscripts are
increasing. By applying Corollary~\ref{cor:ab} to the two inside
integrals, the above is equal to
\begin{equation}
  \label{eq:6}
\int_a^b
\frac{x^{r}q^{\maj(\sigma)}}{[r]_q!} (aq^{-k_1}/x;q)_{r}
\frac{b^{s}q^{\maj(\tau)}}{[s]_q!} (xq^{-k_2}/b;q)_{s}d_q x.
\end{equation}
Note that $\maj(\pi)=\maj(\sigma)+\maj(\tau)+(r+1)(k_2+1)$ if $s\ge1$
and $\maj(\pi)=\maj(\sigma)$ if $s=0$. In either case we can write
$\maj(\pi)=\maj(\sigma)+\maj(\tau)+(r+1)(k-k_1)$. This completes the
proof. 
\end{proof}

We now consider the case $s\ge1$ in Proposition~\ref{prop:AA} so that
$k=k_1+k_2+1$. One can rewrite the integral in
Proposition~\ref{prop:AA} as
\begin{multline*}
(-1)^r a^r q^{\binom r2 -k_1 r} \int_a^b 
\frac{(xq^{1-r+k_1}/a;q)_\infty (xq^{-k_2}/b;q)_\infty}
{(xq^{k_1+1}/a;q)_\infty (xq^{s-k_2}/b;q)_\infty}
d_qx\\
=(-1)^r a^r q^{\binom r2 -k_1 r}\int_{aq^{r-k_1}}^{bq^{k_2+1}}
\frac{(xq^{1-r+k_1}/a;q)_\infty (xq^{-k_2}/b;q)_\infty}
{(xq^{k_1+1}/a;q)_\infty (xq^{s-k_2}/b;q)_\infty}
d_qx,
\end{multline*}
where the equality follows from the fact that the integrand is 0 if
$x=bq^{j}$ for $0\le j\le k_2$ and $x=aq^j$ for $0\le j\le r-k_1-1$.
Thus Proposition~\ref{prop:AA} is equivalent to
\[
\int_{aq^{r-k_1}}^{bq^{k_2+1}}
\frac{(xq^{1-r+k_1}/a;q)_\infty (xq^{-k_2}/b;q)_\infty}
{(xq^{k_1+1}/a;q)_\infty (xq^{s-k_2}/b;q)_\infty}
d_qx=\frac{(-1)^r b^{r+1} [r]_q![s]_q! 
q^{kr+k-k_1-\binom r2} (aq^{-k}/b;q)_{n}}{a^r[n]_q!}.
\]

This is the $(a,b,A,B)\mapsto(aq^{r-k_1},bq^{k_2+1}, q^{r+1}, q^{s+1})$ case of 
\eqref{eq:andrews-askey}. 

\subsection{$q$-integrals of monomials over the order polytope of a
  forest poset}

In this subsection we consider special posets called forests and evaluate
the $q$-integral of a monomial over the order polytope coming from
these posets. These forests have an order polytope whose $q$-volume 
has a hook formula, see Corollary~\ref{cor:hookforest}.

\begin{defn}
  A poset is called a \emph{forest} if every element is covered by at
  most one element. For a forest poset $F$ and its element $x$, the
  \emph{hook length} $h_F(x)$ of $x$ is defined to be the number of
  elements $y\in F$ such that $y\le_F x$. An element of a forest is
  called \emph{isolated} if it has no relation with other elements. A
  \emph{leaf} is a non-isolated element whose hook length is $1$.
\end{defn}

\begin{exam}
  Let $F$ be the forest in Figure~\ref{fig:forest1}. Then $x_6$
  is an isolated element and $x_1,x_2,x_4,x_7,x_8$ are leaves.  These
  elements have hook length $1$. The hook lengths of other elements
  are $h_F(x_3)=3$, $h_F(x_5)=5$, $h_F(x_9)=3$.
\end{exam}

\begin{figure}
  \centering
\includegraphics{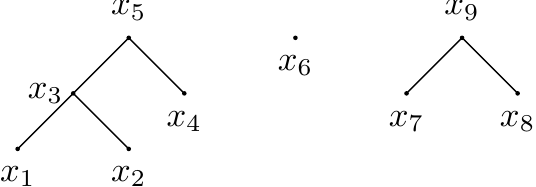}  
  \caption{A forest poset.}
  \label{fig:forest1}
\end{figure}

\begin{defn}
For a forest $F$ and a sequence $a=(a_1,\dots,a_n)$ of
nonnegative integers, let $F_a$ be the poset obtained from $F$ by
attaching $a_i$ leaves to $x_i$ for each $i\in[n]$, see
Figure~\ref{fig:forest2}.
\end{defn}

\begin{exam}
  Let $F$ be the poset in Figure~\ref{fig:forest1} and
\[
a=(a_1,\dots,a_9)=(0,3,2,1,2,3,0,2,1).
\]
The forest $F_a$ is shown in Figure~\ref{fig:forest2}, where the
short edges are newly added from $F$ in Figure~\ref{fig:forest1}. 
We have
\[
h_{F_a}(x_1)=1, \quad
h_{F_a}(x_2)=4, \quad
h_{F_a}(x_3)=8, \quad
h_{F_a}(x_4)=2, \quad
h_{F_a}(x_5)=13, 
\]
\[
h_{F_a}(x_6)=4, \quad
h_{F_a}(x_7)=1, \quad
h_{F_a}(x_8)=3, \quad
h_{F_a}(x_9)=6.
\]
\end{exam}
 
\begin{figure}
  \centering
\includegraphics{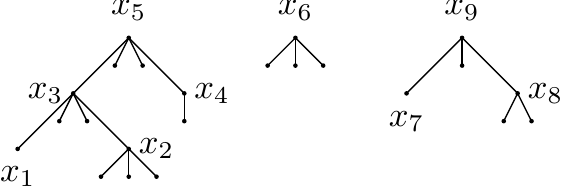}  
  \caption{The forest poset $F_a$, where $F$ is the forest in
    Figure~\ref{fig:forest1} and $a=(a_1,\dots,a_9)$.}
  \label{fig:forest2}
\end{figure}

The next corollary allows us to evaluate any multiple $q$-integral of
a monomial over the order polytope of a naturally labeled forest. 

\begin{thm}\label{thm:forest}
  Let $F$ be a forest on $\{x_1,\dots,x_n\}$ with labeling $\omega_n$
  given by $\omega_n(x_i)=i$. Suppose that $\omega_n$ is a natural
  labeling.  Let $a=(a_1,\dots,a_n)$ be a sequence of nonnegative
  integers.  Then, we have
\[
\int_{\order(F)} x_1^{a_1}\cdots x_n^{a_n} \dqx
=\prod_{v\in F_a} \frac{1}{[h_{F_a}(v)]_q}.
\]
\end{thm}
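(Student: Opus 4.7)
The plan is to prove the identity by induction on $n=|F|$, integrating out the minimum element $x_1$ of $F$ in the innermost $q$-integral; since $\omega_n$ is natural, $x_1$ is indeed a minimal element of the poset.

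The base case $n=0$ is trivial. In the inductive step, $x_1$ is either isolated in $F$ or has a unique parent $x_j$ (with $j\ge 2$), so the range of $x_1$ inside $\order(F)$ is $[0,1]$ or $[0,x_j]$, respectively. In either case the innermost $q$-integral is
\[
\int_0^u x_1^{a_1}\,d_q x_1=\frac{u^{a_1+1}}{[a_1+1]_q},\qquad u\in\{1,x_j\},
\]
and the remaining integral is over $\order(F\setminus\{x_1\})$ with integrand $\prod_{i\ge 2} x_i^{a_i}$ (isolated case) or $x_j^{a_j+a_1+1}\prod_{i\ge 2,\,i\ne j} x_i^{a_i}$ (parent case). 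Applying the inductive hypothesis to the naturally labeled forest $F\setminus\{x_1\}$ with the appropriate modified exponent vector $a'$ then gives
\[
\frac{1}{[a_1+1]_q}\prod_{v\in(F\setminus\{x_1\})_{a'}}\frac{1}{[h_{(F\setminus\{x_1\})_{a'}}(v)]_q}.
\]

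It remains to check the hook-length identity
\[
\prod_{v\in F_a}[h_{F_a}(v)]_q=[a_1+1]_q\prod_{v\in(F\setminus\{x_1\})_{a'}}[h_{(F\setminus\{x_1\})_{a'}}(v)]_q.
\]
The $a_1$ leaves attached below $x_1$ in $F_a$ have hook length $1$ and contribute nothing, while $x_1$ itself has hook length $a_1+1$, producing the factor $[a_1+1]_q$. For every surviving $v\in F\setminus\{x_1\}$ the equality $h_{F_a}(v)=h_{(F\setminus\{x_1\})_{a'}}(v)$ holds by a direct descendant count: in the parent case, the $a_1+1$ descendants of $x_j$ contributed in $F_a$ by the subtree $\{x_1\}$ together with its $a_1$ leaves are exactly replaced in $(F\setminus\{x_1\})_{a'}$ by the $a_1+1$ additional leaves attached directly to $x_j$, so every ancestor of $x_j$ (including $x_j$ itself) sees the same number of descendants; in the isolated case the subtree at $x_1$ is disjoint from the rest and there is nothing to verify.

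The main obstacle is precisely the hook-length bookkeeping above; the integral side reduces to an inductive computation once the single-variable identity $\int_0^u x^a\,d_q x=u^{a+1}/[a+1]_q$ is applied. Alternatively, one could first apply the Scaredy Cat Lemma with $m=1$ at $t=i$ a total of $a_i$ times for each $i$ to reduce the left-hand side to $V_q(\order(F_a))$, and then invoke Theorem~\ref{thm:pw} together with the classical $q$-hook length formula for the $\maj$-generating function of linear extensions of a forest; the direct induction sketched above fuses these two steps into a single recursion.
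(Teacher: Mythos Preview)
Your proof is correct and follows essentially the same induction-on-$n$ argument as the paper: integrate out the minimal element $x_1$ (isolated or leaf case), apply the induction hypothesis to $F'=F\setminus\{x_1\}$ with the modified exponent vector, and verify the hook-length bookkeeping. The only cosmetic differences are your choice of base case $n=0$ versus the paper's $n=1$ and your slightly more explicit descendant-count justification of the hook identity; the alternative route via the Scaredy Cat Lemma and the Bj\"orner--Wachs formula that you mention is exactly what the paper records afterwards as Corollary~\ref{cor:hookforest}.
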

\begin{proof}
  We prove this using induction on $n$. If $n=1$, then both sides are
  equal to $1/[a+1]_q$. Suppose that $n>1$ and the theorem is true for
  $n-1$. Since $\omega$ is a natural labeling, $x_1$ is an isolated
  element or a leaf in $F$.

  Case 1: $x_1$ is an isolated element in $F$. Then the range for
  $x_1$ in the $q$-integral is $0\le x_1\le 1$. Let $F'=F-\{x_1\}$ and
  $a'=(a_2,\dots,a_n)$.  By the induction hypothesis, we have
\begin{multline*}
\int_{\order(F)} x_1^{a_1}\cdots x_n^{a_n} \dqx
=\int_{0}^1 x_1^{a_1}d_qx_1 \int_{\order(F')} x_2^{a_2}\cdots x_n^{a_n} 
d_qx_2\cdots d_qx_n\\
=\frac{1}{[a_1+1]_q}\prod_{v\in F'_{a'}} \frac{1}{[h_{F'_{a'}}(v)]_q}
=\prod_{v\in F_a} \frac{1}{[h_{F_a}(v)]_q}.
\end{multline*}
  
Case 2: $x_1$ is a leaf in $F$. Let $x_k$ be the unique element
covering $x_1$. Then the range for $x_1$ in the $q$-integral is
$0\le x_1\le x_k$. Let $F'=F-\{x_1\}$ and $a'=(a'_2,\dots,a'_n)$,
where $a'_i=a_i$ if $i\ne k$ and $a'_k=a_k+a_1+1$.
By the induction hypothesis, we have
\begin{multline*}
\int_{\order(F)} x_1^{a_1}\cdots x_n^{a_n} \dqx
= \int_{\order(F')} \left(\int_{0}^{x_k} x_1^{a_1}d_qx_1\right)
x_2^{a_2}\cdots x_n^{a_n} 
d_qx_2\cdots d_qx_n\\
= \int_{\order(F')} \left(\frac{x_k^{a_1+1}}{[a_1+1]_q}\right)
x_2^{a_2}\cdots x_n^{a_n} 
d_qx_2\cdots d_qx_n\\
= \frac{1}{[a_1+1]_q}\int_{\order(F')} 
x_2^{a'_2}\cdots x_n^{a'_n} 
d_qx_2\cdots d_qx_n
=\frac{1}{[a_1+1]_q}\prod_{v\in F'_{a'}} \frac{1}{[h_{F'_{a'}}(v)]_q}.
\end{multline*}
It is easy to check that
\[
\frac{1}{[a_1+1]_q}\prod_{v\in F'_{a'}} \frac{1}{[h_{F'_{a'}}(v)]_q}
=\prod_{v\in F_a} \frac{1}{[h_{F_a}(v)]_q},
\]
which completes the proof.   
\end{proof}

If $a=(0,\dots,0)$ in Theorem~\ref{thm:forest} we obtain the
$q$-volume of the order polytope of a forest.  Using
Theorem~\ref{thm:pw} we also obtain the maj-generating function for
the linear extensions of a forest, which was first proved by
Bj\"orner and Wachs \cite{Bjorner1989}.

\begin{cor}
\label{cor:hookforest}
  Let $F$ be a forest on $\{x_1,\dots,x_n\}$ with labeling
  $\omega_n$ given by $\omega_n(x_i)=i$. Suppose that $\omega_n$ is
  naturally labeled. Then, we have
\[
V_q(\order(F)) = \frac{1}{\prod_{v\in F} [h_F(v)]_q},
\]
\[
\sum_{\pi\in\LL(F,\omega_n)} q^{\maj(\pi)}
= \frac{[n]_q!}{\prod_{v\in F} [h_F(v)]_q}.
\]
\end{cor}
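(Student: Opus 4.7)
The plan is to observe that Corollary~\ref{cor:hookforest} is essentially the $a=(0,\dots,0)$ specialization of Theorem~\ref{thm:forest}, combined with the bridge provided by Theorem~\ref{thm:pw}.

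First I would set $a_1=\cdots=a_n=0$ in Theorem~\ref{thm:forest}. The integrand $x_1^{a_1}\cdots x_n^{a_n}$ becomes identically $1$, so the left-hand side is exactly $V_q(\order(F))$ by the definition of $q$-volume. On the right-hand side, attaching $a_i = 0$ leaves to $x_i$ for each $i$ leaves the forest unchanged, so $F_a = F$ and consequently $h_{F_a}(v) = h_F(v)$ for every $v \in F$. This yields the first stated formula
\[
V_q(\order(F)) = \prod_{v \in F}\frac{1}{[h_F(v)]_q}.
\]

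For the second formula I would invoke Theorem~\ref{thm:pw}, which expresses the $q$-volume of any order polytope as
\[
V_q(\order(F)) = \frac{1}{[n]_q!}\sum_{\pi\in\LL(F,\omega_n)} q^{\maj(\pi)}.
\]
Equating the two expressions for $V_q(\order(F))$ and clearing the factor $[n]_q!$ immediately gives
\[
\sum_{\pi\in\LL(F,\omega_n)} q^{\maj(\pi)} = \frac{[n]_q!}{\prod_{v \in F}[h_F(v)]_q},
\]
which is the Bj\"orner--Wachs hook-length formula for forests.

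There is no real obstacle here since all the work is already done in Theorem~\ref{thm:forest} (by induction on $n$, splitting on whether $x_1$ is isolated or a leaf) and in Theorem~\ref{thm:pw} (via the $(P,\omega)$-partition identity \eqref{eq:lin_ext}). The only small check worth flagging is that with $a=(0,\dots,0)$ the operation $F \mapsto F_a$ is genuinely the identity on posets, so that no spurious hook factors appear from phantom leaves; this is immediate from the definition of $F_a$.
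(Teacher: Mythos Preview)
Your proposal is correct and matches the paper's own argument exactly: the paper states that setting $a=(0,\dots,0)$ in Theorem~\ref{thm:forest} gives the first formula, and then invokes Theorem~\ref{thm:pw} for the second. Your observation that $F_a=F$ when $a=(0,\dots,0)$ is the only small check needed, and you handled it.
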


\section{$q$-Selberg integrals}
\label{sec:q-selberg-integrals}

In this section we will find a combinatorial interpretation for a
$q$-Selberg integral. We will use the first three lemmas of
Section~\ref{sec:q-integrals-over} to ``insert" linear factors in
$q$-integrals by building a ``Selberg poset'' whose $q$-volume is the
$q$-Selberg integral, up to a constant. As a corollary of the
$q$-Selberg integral evaluation, we obtain the factorization of the
$maj$-generating function for the linear extensions, see
Corollary~\ref{cor:majSelberg}.

There are many generalizations of the Selberg integral, see
\cite{Forrester2008}. We consider the following $q$-Selberg integral.
Recall \cite[p. 493]{AAR_SP} that the $q$-gamma function is defined by
\[
\Gamma_q(x) = \frac{(q;q)_{x-1}}{(1-q)^{x-1}}.
\]

\begin{defn}[$q$-Selberg integral]
  For complex numbers $\alpha$ and $\beta$ with
  $\mathrm{Re} (\alpha)>0$, $\mathrm{Re} (\beta)>0$, and a
  non-negative integer $m$, the \emph{$q$-Selberg integral}
  $S_q(n,\alpha,\beta,m)$ is defined by
\[
\int_{0\le x_1\le\cdots\le x_n\le1}
 \prod_{i=1}^n x_i^{\alpha-1} (qx_i;q)_{\beta-1} 
\prod_{1\le i< j\le n} 
x_j^{2m-1} \left( q^{1-m} x_i/x_j ; q \right)_{2m-1}
\DeltaBar(x) \dqx.
\]
\end{defn}

The following theorem was conjectured by Askey \cite{Askey1980} and
proved independently by Habsieger \cite{Habsieger1988} and Kadell
\cite{Kadell1988b}.

\begin{thm}
\label{thm:AHKS}
Let $\alpha$ $\beta$ be complex numbers with $\mathrm{Re} (\alpha)>0$,
$\mathrm{Re} (\beta)>0$, and $m$ a non-negative integer. Then 
\begin{equation}\label{eq:q-Selberg}
S_q(n,\alpha,\beta,m) =  q^{\alpha m \binom{n}{2} + 2 m^2 \binom{n}{3}}
\prod_{j=1}^n \frac{\Gamma_q(\alpha+(j-1)m)
\Gamma_q(\beta+(j-1)m)\Gamma_q(jm)}
{\Gamma_q(\alpha+\beta+(n+j-2)m)\Gamma_q(m)}.
\end{equation}
\end{thm}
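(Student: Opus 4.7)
The plan is to convert the iterated $q$-integral into a multiple $q$-series and then reduce to Morris's constant-term identity of type $A_{n-1}$. First, I would observe that the integrand vanishes on every diagonal $x_i = x_j$ (for $i\ne j$): the factor $\DeltaBar(x)$ supplies the zero $x_j - x_i$, and the pair factor $(q^{1-m}x_i/x_j;q)_{2m-1}$ supplies the further zeros $x_i = q^k x_j$ for $1-m \le k \le m-1$. Hence Lemma~\ref{lem:par} applies directly: setting $x_i = q^{\mu_i + n - i}$ with $\mu \in \Par_n$, one obtains
\begin{equation*}
S_q(n,\alpha,\beta,m) = (1-q)^n \sum_{\mu \in \Par_n} q^{|\mu + \delta_n|}\, F(q^{\mu + \delta_n}),
\end{equation*}
where $F$ denotes the integrand. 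This step is purely formal and converts the evaluation into the problem of summing an explicit $n$-fold $q$-series.

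Second, I would expand $F(q^{\mu + \delta_n})$ in $q$-Pochhammer form using the exponents $t_i = \mu_i + n - i$. The single-variable factors become $q^{(\alpha-1)t_i}(q^{t_i+1};q)_{\beta-1}$, while the pair factor $x_j^{2m-1}(q^{1-m}x_i/x_j;q)_{2m-1}$, absorbed together with the $\DeltaBar$ factor $\prod_{i<j}(x_j - x_i)$, yields a Morris-symmetric product of $q$-shifted factorials in the differences $t_i - t_j$, multiplied by an explicit $q$-monomial prefactor. Careful bookkeeping of this prefactor is what should produce the characteristic factor $q^{\alpha m \binom{n}{2} + 2m^2 \binom{n}{3}}$ in \eqref{eq:q-Selberg}.

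Third, I would invoke Morris's constant-term identity (Macdonald's $A_{n-1}$ conjecture, first proved in the necessary generality by Habsieger and independently by Zeilberger) with parameter specialization $(a,b,c) = (\alpha - 1, \beta - 1, m)$. The left side of that identity, upon expansion as a sum over $t \in \mathbb{Z}^n$, matches the symmetric $q$-sum produced in step two after restricting to the strictly decreasing chamber $t_1 > t_2 > \cdots > t_n \ge 0$ (the other cells of $\mathbb{Z}^n$ give vanishing contributions by symmetry combined with the zeros of the pair factors). Its right side is an explicit product that, after converting $q$-Pochhammers to $q$-gamma functions via $\Gamma_q(x) = (q;q)_{x-1}/(1-q)^{x-1}$ and combining with the prefactor, reproduces exactly the right-hand side of \eqref{eq:q-Selberg}.

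The main obstacle is twofold. The tractable part is the $q$-monomial bookkeeping in step two, which must align with the characteristic $q^{\alpha m \binom{n}{2} + 2m^2\binom n 3}$ prefactor; this is delicate but mechanical. The genuinely hard part is step three: Morris's identity is itself a substantial theorem, and this plan uses it as a black box. A fully self-contained proof would have to replace step three with either Kadell's vanishing argument using Macdonald polynomials, or Habsieger's induction on $n$ bootstrapped from the Andrews--Askey integral of Proposition~\ref{prop:AA} together with a $q$-analogue of Anderson's contour-free Selberg argument; either route is a significant analytic detour from the combinatorial framework of this paper, which is presumably the reason the authors outsource the evaluation via citation.
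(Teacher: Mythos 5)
There is no proof in the paper to compare against: Theorem~\ref{thm:AHKS} is Askey's conjecture \cite{Askey1980}, and the paper imports its evaluation wholesale from Habsieger \cite{Habsieger1988} and Kadell \cite{Kadell1988b}; Section~\ref{sec:q-selberg-integrals} then runs in the opposite direction, using \eqref{eq:q-Selberg} as a black box to derive combinatorial consequences such as Corollary~\ref{cor:majSelberg}. So your proposal has to be measured against the original proofs, and there it has a genuine gap. Your first step is sound: $\DeltaBar(x)$ (and, for $m\ge1$, the factors $(q^{1-m}x_i/x_j;q)_{2m-1}$) kill the diagonals $x_i=x_j$, so Lemma~\ref{lem:par} converts $S_q(n,\alpha,\beta,m)$ into $(1-q)^n\sum_{\mu\in\Par_n}q^{|\mu+\delta_n|}F(q^{\mu+\delta_n})$; this is exactly the manipulation the paper itself performs, in reverse, in Theorem~\ref{thm:AKS_RPP}.

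The gap is your step three. A constant term is a single coefficient of a Laurent polynomial; it is not ``a sum over $t\in\mathbb{Z}^n$'' of evaluations of the integrand at powers of $q$, so there are no cells of $\mathbb{Z}^n$ to discard, and the claimed match with your chamber-restricted sum does not parse as stated. What genuinely does decompose into chambers is the Jackson sum for the cube integral \eqref{eq:Askey} --- that reduction to the dominant chamber is precisely the antisymmetrization lemma of Kadell quoted right after Theorem~\ref{thm:AHKS} --- but bridging that sum to Morris's constant term is the actual content of the Askey--Morris equivalence, a substantive result established in the cited papers by iterated $q$-binomial expansions and careful reindexing, not ``delicate but mechanical'' bookkeeping. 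Moreover, even granting that bridge, the $q$-Morris identity is a theorem of exactly the same depth as \eqref{eq:q-Selberg}: Habsieger's proof of Askey's conjecture consists of proving the constant term identity together with this equivalence, so invoking $q$-Morris is invoking the target theorem in different clothing. (Your attribution is also off: $q$-Morris in full generality is due to Habsieger and Kadell; Zeilberger--Bressoud proved $q$-Dyson, and Zeilberger's elementary proof of $q$-Morris came later.) Once the two unproved ingredients are removed, your proposal establishes nothing beyond what the paper's citation already records; you candidly concede this, but the concession does not close the gap.
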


We note that \eqref{eq:q-Selberg} is a slight
modification of Askey's original conjecture:
\begin{multline}
\label{eq:Askey}
\int_0^1 \cdots \int_0^1
 \prod_{i=1}^n  x_i^{\alpha-1}  (q x_i ; q)_{\beta-1}
 \prod_{1 \le i < j \le n}
  x_j^{2m} \left( q^{1-m} x_i/x_j ; q \right)_{2m}
 d_q x_1 \cdots d_q x_n\\
= q^{\alpha m \binom{n}{2} + 2 m^2 \binom{n}{3}}
\prod_{j=1}^n \frac{\Gamma_q(\alpha+(j-1)m)
\Gamma_q(\beta+(j-1)m)\Gamma_q(1+jm)}
{\Gamma_q(\alpha+\beta+(n+j-2)m)\Gamma_q(1+m)}.
\end{multline}

The equivalence of \eqref{eq:q-Selberg} and \eqref{eq:Askey} can be
obtained from the fact that the integrand in $S_q(n,\alpha,\beta,m)$
is symmetric and Kadell's result \cite{Kadell1988a}: for an
anti-symmetric function $f(x)$, we have
\[
\int_{[a,b]^n} f(x)\prod_{1\le i<j\le n} (x_i- p x_j) d_qx
=\frac{[n]_p!}{n!}\int_{[a,b]^n} f(x) \DeltaBar(x) d_qx.
\]

We will give a combinatorial interpretation of
$S_q(n,\alpha,\beta,m)$ when $\alpha-1=r$, $\beta-1=s$ and $m$ are
non-negative integers. This requires defining a poset, the Selberg poset, whose 
order polytope has $q$-volume given by the $q$-Selberg integral. 
Basically we start with a chain $C$ of $n$ elements. Insert two independent chains 
with $m$ elements between any two elements of $C$. For each element of $c\in C$, 
insert a chain with $s$ elements above $c$ 
and a chain with $r$ elements below $c$.  

\begin{defn}
\label{defn:selbergposet}
We define the \emph{Selberg poset} $P(n,r,s,m)$ to be the poset in
which the elements are $x_i, y_i^{(a)}, z_i^{(b)},w_{i,j}^{(k)}$ for
$i,j\in[n], a\in[r], b\in[s], k\in[m]$ with $i\ne j$, and the covering
relations are as follows:
\begin{itemize}
\item $x_i< w_{i,j}^{(1)}< \cdots < w_{i,j}^{(m)} < x_j$ for
  $1\le i<j\le n$,
\item $x_i< w_{j,i}^{(m)}< \cdots < w_{j,i}^{(1)} < x_j$ for
$1\le i<j\le n$,
\item $y_{i}^{(1)} < \cdots< y_{i}^{(r)} < x_i 
< z_i^{(1)}<\cdots< z_i^{(s)}$ for $1\le i\le n$.
\end{itemize}
We define $W$ to be the following permutation of the elements of
$P(n,r,s,m)$:
\begin{multline}\label{eq:W}
W= \left(\prod_{1\le i< j\le n} w_{i,j}^{(1)} \cdots w_{i,j}^{(m)}\right)
\left(\prod_{1\le i<j\le n} w_{j,i}^{(1)} \cdots w_{j,i}^{(m)}\right)\\
\left(\prod_{i=1}^n  y_i^{(1)}\cdots  y_i^{(r)} \right)
\left(\prod_{i=1}^n x_i \right)
\left(\prod_{i=1}^n  z_i^{(1)}\cdots  z_i^{(s)}\right),
\end{multline}
where $\prod_{1\le i< j\le n} w_{i,j}^{(1)} \cdots w_{i,j}^{(m)}$
means the concatenation of the word
$w_{i,j}=w_{i,j}^{(1)} \cdots w_{i,j}^{(m)}$ for $1\le i<j\le n$ so
that $w_{i,j}$ appears before $w_{i',j'}$ if $i<i'$ or ($i=i'$ and
$j<j'$). The other products are defined in the same way, for instance,
$\prod_{i=1}^n x_i$ means the word $x_1\cdots x_n$.  Finally, we
define $\omega$ to be the labeling of $P(n,r,s,m)$ such that for
$u\in P(n,r,s,m)$, $\omega(u)$ is the position of $u$ in $W$.
\end{defn}

\begin{exam}
Let $P$ be the Selberg poset $P(n,r,s,m)$ in
Figure~\ref{fig:selberg_poset}. 
Then 
\begin{multline*}
W=  
w_{1,2}^{(1)} w_{1,2}^{(2)} w_{1,2}^{(3)}
w_{1,3}^{(1)} w_{1,3}^{(2)} w_{1,3}^{(3)}
w_{1,4}^{(1)} w_{1,4}^{(2)} w_{1,4}^{(3)}
w_{2,3}^{(1)} w_{2,3}^{(2)} w_{2,3}^{(3)}
w_{2,4}^{(1)} w_{2,4}^{(2)} w_{2,4}^{(3)}
w_{3,4}^{(1)} w_{3,4}^{(2)} w_{3,4}^{(3)}\\
w_{2,1}^{(1)} w_{2,1}^{(2)} w_{2,1}^{(3)}
w_{3,1}^{(1)} w_{3,1}^{(2)} w_{3,1}^{(3)}
w_{4,1}^{(1)} w_{4,1}^{(2)} w_{4,1}^{(3)}
w_{3,2}^{(1)} w_{3,2}^{(2)} w_{3,2}^{(3)}
w_{4,2}^{(1)} w_{4,2}^{(2)} w_{4,2}^{(3)}
w_{4,3}^{(1)} w_{4,3}^{(2)} w_{4,3}^{(3)}\\
y_1^{(1)} y_1^{(2)}
y_2^{(1)} y_2^{(2)}
y_3^{(1)} y_3^{(2)}
y_4^{(1)} y_4^{(2)}
x_1 x_2 x_3 x_4
z_1^{(1)}
z_2^{(1)}
z_3^{(1)}
z_4^{(1)}.
\end{multline*}
The labeling $\omega$ of $P(n,r,s,m)$ is shown in
Figure~\ref{fig:selberg_poset}. 
\end{exam}

\begin{figure}
  \centering
\includegraphics[width=\textwidth]{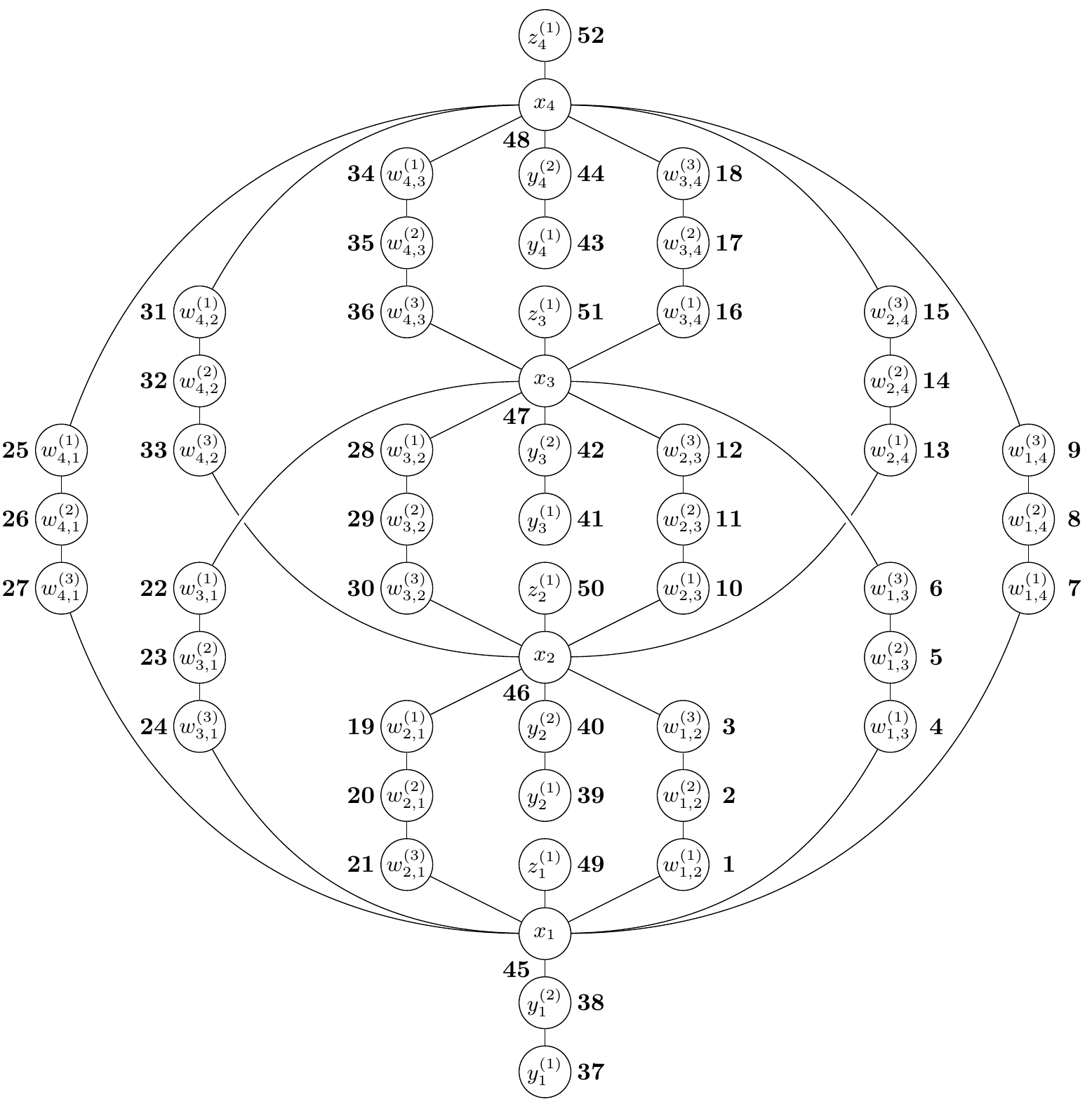}    
\caption{The Selberg poset $P(n,r,s,m)$ for $n=4, r=2, s=1, m=3$ with
  labeling.}
  \label{fig:selberg_poset}
\end{figure}

The following theorem implies that the $q$-Selberg integral
is a $q$-volume of the order polytope $\order(P(n,r,s,m))$ up to a certain
factor.

\begin{thm}
\label{thm:selbergvol}
We have
  \begin{multline*}
\int_{0\le x_1\le\cdots\le x_n\le1}
 \prod_{i=1}^n x_i^{r} (qx_i;q)_{s}
\prod_{1\le i< j\le n} 
x_j^{2m-1} \left( q^{1-m} x_i/x_j ; q \right)_{2m-1}
\DeltaBar(x) \dqx\\
=q^{-\binom m2\binom n2} 
([r]_q!)^n ([s]_q!)^n ([m]_q!)^{2\binom n2}
\cdot \int_{\order(P(n,r,s,m))} d_qW,
  \end{multline*}
  where $d_qW$ is the order of integration obtained by putting $d_q$
  in front of each letter in \eqref{eq:W}.
\end{thm}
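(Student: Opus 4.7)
The plan is to build the Selberg poset $P(n,r,s,m)$ starting from the chain $x_1<x_2<\cdots<x_n$ and attaching the various subchains one at a time, using each of the lemmas from Section~\ref{sec:operations-posets} to absorb one of the factors of the $q$-Selberg integrand. Each application of a lemma introduces a prefactor (a $q$-factorial or a power of $q$), and the accumulation of these prefactors should reproduce $q^{-\binom m2\binom n2}([r]_q!)^n([s]_q!)^n([m]_q!)^{2\binom n2}$.

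The key algebraic preparation is to fuse the Vandermonde factor $\DeltaBar(x)$ with the $q$-Pochhammer symbols in the integrand. A direct expansion shows that the two length-$m$ Pochhammer symbols $(x_i/x_j;q)_m$ and $(q^{1-m}x_i/x_j;q)_m$ overlap exactly on the factor $1-x_i/x_j$, so
\[
(x_i/x_j;q)_m\,(q^{1-m}x_i/x_j;q)_m = (1-x_i/x_j)\,(q^{1-m}x_i/x_j;q)_{2m-1}.
\]
Multiplying by $x_j^{2m}$ and rearranging gives
\[
x_j^{2m-1}(q^{1-m}x_i/x_j;q)_{2m-1}\,(x_j-x_i) = q^{-\binom m2}\bigl[x_j^m(x_i/x_j;q)_m\bigr]\bigl[q^{\binom m2}x_j^m(q^{-(m-1)}x_i/x_j;q)_m\bigr].
\]
The first bracket is the factor $q^{\maj(\rho)}x_t^m(q^{-\des(\rho)}x_s/x_t;q)_m$ appearing in the attaching chain lemma (Lemma~\ref{lem:xyx}) for $\rho=\id\in S_m$, and the second is the same factor for $\rho=(m,m-1,\dots,1)$ with $\maj(\rho)=\binom m2$ and $\des(\rho)=m-1$. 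Taking the product over all $\binom n2$ pairs $i<j$ accounts for the overall $q^{-\binom m2\binom n2}$ on the right-hand side.

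With this rewriting in hand, I would apply the attaching chain lemma twice for each pair $i<j$: first with $\rho=\id$ to insert the chain $w_{i,j}^{(1)}<\cdots<w_{i,j}^{(m)}$ between $x_i$ and $x_j$, and then with $\rho$ the reverse permutation to insert the chain $w_{j,i}^{(m)}<\cdots<w_{j,i}^{(1)}$ between them (the reverse indexing being forced by the choice of $\rho$). Each of these $2\binom n2$ applications contributes $[m]_q!$. Then, for each $i$, the Scaredy Cat Lemma (Lemma~\ref{lem:meancat}) inserts $y_i^{(1)}<\cdots<y_i^{(r)}$ below $x_i$, absorbing $x_i^r$ and contributing $[r]_q!$, and the Happy Cat Lemma (Lemma~\ref{lem:happycat}) inserts $z_i^{(1)}<\cdots<z_i^{(s)}$ above $x_i$, absorbing $(qx_i;q)_s$ and contributing $[s]_q!$. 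After all these operations the integrand has been reduced to $1$, the underlying poset is precisely $P(n,r,s,m)$, and the accumulated prefactor matches the right-hand side.

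The main subtlety is arranging the order of integration so that it coincides with $d_qW$ from \eqref{eq:W}. I would use form \eqref{eq:xyx} of the attaching chain lemma (which prepends the inserted block to the entire integration string) for the $w$-insertions, iterating in the reverse of the desired $W$-order so that the blocks land in their prescribed lexicographic positions at the front; the Scaredy Cat Lemma then naturally places the $y$-blocks just before the $x_i$'s, and the Happy Cat Lemma places the $z$-blocks just after, matching the tail of $W$. Whenever a reordering within a block is required, Corollary~\ref{cor:change_order} provides the tool, and one must check that each inserted chain lies strictly between previously introduced endpoints so that no boundary inequality is actually crossed. Assembling all prefactors yields the stated identity.
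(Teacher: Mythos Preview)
Your proposal is correct and follows essentially the same route as the paper's proof: rewrite the integrand so that each pair $i<j$ contributes the two factors $x_j^m(x_i/x_j;q)_m$ and $x_j^m(q^{1-m}x_i/x_j;q)_m$, then absorb these via the attaching chain lemma with $\rho=\id$ and $\rho=(m,m-1,\dots,1)$ respectively, and absorb $x_i^r$ and $(qx_i;q)_s$ via the Scaredy Cat and Happy Cat lemmas. Your tracking of the $q^{-\binom m2}$ per pair (coming from the mismatch between $x_j^m(q^{1-m}x_i/x_j;q)_m$ and the lemma's factor $q^{\binom m2}x_j^m(q^{-(m-1)}x_i/x_j;q)_m$) is exactly right.

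One small point on the integration order: in your described sequence (attaching chains first, then Scaredy Cat, then Happy Cat) the Scaredy Cat lemma does \emph{not} place the $y$-blocks ``just before the $x_i$'s''; it prepends them to the entire string, so you would land on $y\cdot w\cdot x\cdot z$ rather than the desired $W=w\cdot y\cdot x\cdot z$. Since the $y$'s and $w$'s are mutually incomparable in $P(n,r,s,m)$, Corollary~\ref{cor:change_order} does indeed let you swap them at no cost, as you note. The paper avoids this extra step by applying the lemmas in the order Happy Cat, Scaredy Cat, attaching chain (reverse $\rho$), attaching chain ($\rho=\id$), each time iterating over the relevant indices in the order forced by the prepend/append behavior; this directly yields $d_qW$.
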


\begin{proof}
  Observe that the left side is equal to
\[
\int_{0\le x_1\le\cdots\le x_n\le1}
 \prod_{i=1}^n x_i^{r} (qx_i;q)_{s}
\prod_{1\le i< j\le n} 
x_j^{m} \left( q^{1-m} x_i/x_j ; q \right)_{m}
x_j^{m} \left(x_i/x_j ; q \right)_{m} \dqx.
\]
Then we get the right side by applying
the happy cat lemma to each factor $(qx_i;q)_s$,
the scaredy cat lemma to each factor $x_i^r$,
the first part \eqref{eq:xyx} of attaching chain lemma
with $\rho=m(m-1)\cdots 1$ to each factor
$x_j^{m} \left( q^{1-m} x_i/x_j ; q \right)_{m}$
 and the first part \eqref{eq:xyx} of attaching chain lemma
with $\rho=12\cdots m$ to each factor
$x_j^{m} \left(x_i/x_j ; q \right)_{m}$.
\end{proof}

The following corollary gives a combinatorial interpretation for the
$q$-Selberg integral in terms of linear extensions.

\begin{cor}
\label{cor:comb_inter}
We have
  \begin{multline*}
\int_{0\le x_1\le\cdots\le x_n\le1}
 \prod_{i=1}^n x_i^{r} (qx_i;q)_{s} 
\prod_{1\le i< j\le n} 
x_j^{2m-1} \left( q^{1-m} x_i/x_j ; q \right)_{2m-1}
\DeltaBar(x) \dqx\\
=\frac{q^{-\binom m2\binom n2} 
([r]_q!)^n ([s]_q!)^n ([m]_q!)^{2\binom n2}}
{[N]_q!} \sum_{\pi\in\LL(P(n,r,s,m),\omega)}q^{\maj(\pi)},
  \end{multline*}
where
\[
N = n(r+s+1) + 2m \binom n2,
\]
and, $\omega$ is the labeling of the Selberg poset $P(n,r,s,m)$
defined in Definition~\ref{defn:selbergposet}.
\end{cor}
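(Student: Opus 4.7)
The plan is to derive the corollary directly from Theorem~\ref{thm:selbergvol} by rewriting the $q$-volume $\int_{\order(P(n,r,s,m))} d_qW$ as a $\maj$-generating function of linear extensions via Theorem~\ref{thm:pw}. The key observation is that the labeling $\omega$ of the Selberg poset was defined precisely so that it is compatible with the integration order $d_qW$: if we rename the element occupying position $i$ in the word $W$ as $x_i$, then $\omega$ becomes the natural position labeling $\omega_N(x_i)=i$, and the iterated $q$-integral $d_qW$ becomes $d_qx_1\cdots d_qx_N$. Hence Theorem~\ref{thm:pw} applies directly and yields
$$\int_{\order(P(n,r,s,m))} d_qW \;=\; \frac{1}{[N]_q!}\sum_{\pi\in\LL(P(n,r,s,m),\omega)}q^{\maj(\pi)},$$
where $N$ is the total number of elements of $P(n,r,s,m)$.

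Next I would verify that $N = n(r+s+1) + 2m\binom{n}{2}$ by counting the elements of the Selberg poset directly from Definition~\ref{defn:selbergposet}. The elements $x_i$ for $1\le i\le n$ contribute $n$; the elements $y_i^{(a)}$ contribute $nr$; the elements $z_i^{(b)}$ contribute $ns$; and the elements $w_{i,j}^{(k)}$ with $i\ne j$ contribute $n(n-1)m = 2m\binom{n}{2}$, since for each unordered pair $\{i,j\}$ there are two interlocking $m$-element chains between $x_i$ and $x_j$. Summing gives $n(1+r+s) + 2m\binom{n}{2} = N$.

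Substituting the displayed identity into the conclusion of Theorem~\ref{thm:selbergvol} then produces the stated formula. There is no real obstacle here: the content of this corollary is essentially the observation that the Selberg poset $P(n,r,s,m)$ together with its labeling $\omega$ were constructed precisely so that Theorem~\ref{thm:pw} becomes immediately applicable to $\int_{\order(P(n,r,s,m))} d_qW$, while all the analytic work converting Schur-type and Pochhammer factors into poset operations has already been carried out in Theorem~\ref{thm:selbergvol}.
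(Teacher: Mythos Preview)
Your proposal is correct and matches the paper's approach exactly: the paper states that this corollary ``is an immediate consequence of Theorem~\ref{thm:selbergvol} and Theorem~\ref{thm:pw},'' which is precisely your argument. Your additional verification of the element count $N$ and the explicit remark that $\omega$ is defined so that integration in order $d_qW$ corresponds to the standard order $d_qx_1\cdots d_qx_N$ after relabeling are helpful elaborations, but the underlying reasoning is the same.
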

\begin{proof}
This is an immediate consequence of Theorem~\ref{thm:selbergvol}
and Theorem~\ref{thm:pw}.
\end{proof}

Using the $q$-Selberg integral formula, we obtain an explicit product
formula for the maj-generating function for linear extensions of the
Selberg poset. 

\begin{cor}
\label{cor:majSelberg}
We have
\begin{multline*}
\sum_{\pi\in\LL(P(n,r,s,m),\omega)}q^{\maj(\pi)} \\
=\frac{q^{\binom m2\binom n2+(r+1)m \binom{n}{2} + 2 m^2 \binom{n}{3}} [N]_q!}
{([r]_q!)^n ([s]_q!)^n ([m]_q!)^{2\binom n2}}
\prod_{j=1}^n \frac{[r+(j-1)m]_q![s+(j-1)m]_q! [jm-1]_q!}
{[r+s+1+(n+j-2)m]_q![m-1]_q!},
\end{multline*}
where $N$ and $\omega$ are the same as in
Corollary~\ref{cor:comb_inter}.
\end{cor}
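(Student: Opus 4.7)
The plan is to combine Corollary~\ref{cor:comb_inter} with the Askey--Habsieger--Kadell evaluation in Theorem~\ref{thm:AHKS}. First, I observe that the left-hand side of Corollary~\ref{cor:comb_inter} matches the $q$-Selberg integral $S_q(n,\alpha,\beta,m)$ under the substitution $\alpha=r+1$, $\beta=s+1$. Solving Corollary~\ref{cor:comb_inter} for the maj-generating function yields
\begin{equation*}
\sum_{\pi\in\LL(P(n,r,s,m),\omega)} q^{\maj(\pi)}
= \frac{q^{\binom{m}{2}\binom{n}{2}}\,[N]_q!}{([r]_q!)^n([s]_q!)^n([m]_q!)^{2\binom{n}{2}}}\, S_q(n,r+1,s+1,m).
\end{equation*}

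Next, I substitute $\alpha=r+1$, $\beta=s+1$ into Theorem~\ref{thm:AHKS} and convert every $q$-gamma value to a $q$-factorial via the identity $\Gamma_q(k+1)=(q;q)_k/(1-q)^k=[k]_q!$, valid for nonnegative integers $k$. This produces the identifications $\Gamma_q(r+1+(j-1)m)=[r+(j-1)m]_q!$, $\Gamma_q(s+1+(j-1)m)=[s+(j-1)m]_q!$, $\Gamma_q(jm)=[jm-1]_q!$, $\Gamma_q(r+s+2+(n+j-2)m)=[r+s+1+(n+j-2)m]_q!$, and $\Gamma_q(m)=[m-1]_q!$, matching each factor appearing in the product of Corollary~\ref{cor:majSelberg}.

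Finally, I combine the two $q$-power prefactors: the $q^{\binom{m}{2}\binom{n}{2}}$ coming from rearranging Corollary~\ref{cor:comb_inter} multiplies the $q^{(r+1)m\binom{n}{2}+2m^2\binom{n}{3}}$ from Theorem~\ref{thm:AHKS}, producing the claimed total exponent $\binom{m}{2}\binom{n}{2}+(r+1)m\binom{n}{2}+2m^2\binom{n}{3}$. The main (and only) obstacle is a careful bookkeeping check of these exponents together with the $q$-factorial index shifts; since both prerequisite results are already established, no new mathematical input is needed.
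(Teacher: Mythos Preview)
Your proposal is correct and follows exactly the paper's approach: the paper's proof is the single sentence ``This is an immediate consequence of Corollary~\ref{cor:comb_inter} and Theorem~\ref{thm:AHKS},'' and you have simply made the bookkeeping explicit. The identification of the left-hand side of Corollary~\ref{cor:comb_inter} with $S_q(n,r+1,s+1,m)$, the $\Gamma_q$-to-$[k]_q!$ conversions, and the combination of the $q$-exponents are all correct.
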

\begin{proof}
This is an immediate consequence of Corollary~\ref{cor:comb_inter}
and Theorem~\ref{thm:AHKS}.
\end{proof}

We note that Kim and Oh \cite{KimOh} used Stanley's combinatorial
interpretation to study the Selberg integral. They found a connection
with certain combinatorial objects called Young books, which
generalizes standard Young tableaux of certain shapes. It would be
interesting to see if their results can be generalized using ours. We
also note that Kim and Okada \cite{KimOkada} considered a different
$q$-Selberg integral. By evaluating the $q$-integral, they showed that
it is a generating function for Young books with certain weight. 

\section{Reverse plane partitions}
\label{sec:reverse-plane-part}

In this section we consider $q$-Selberg type integrals with Schur
functions in the integrand. We then relate the resulting integral evaluations to 
generating functions for reverse plane partitions and generalized Gelfand-Tsetlin
patterns. In particular we
\begin{enumerate}
\item define a Schur poset whose truncated order polytope has $q$-volume 
which is basically a Schur function $s_\lambda(x_1,\dots,x_n)$ 
(Lemma~\ref{lem:schurDelta}), 
\item use $q$-integral evaluations to give generating functions for shifted and 
non-shifted reverse plane partitions with a given shape and diagonal 
(Theorem~\ref{thm:rpp} and Corollary~\ref{cor:rpp2}), 
\item show that a $q$-integral of Warnaar may be interpreted as the trace 
generating function for reverse plane partitions of given shape 
(Theorems~\ref{thm:warnaar} and \ref{thm:warnaar_rpp}), and 
give an analogous Warnaar-type $q$-integral for the shifted version due to Gansner 
(Theorem~\ref{thm:gansner_int}),
\item define a generalized Gelfand-Tsetlin pattern, and give a 
new generating function for these objects (Theorem~\ref{thm:GT}),
\item relate the Askey-Kadell-Selberg integral to a new weighted generating function
for reverse plane partitions of square shape 
(Theorems~\ref{thm:AKS_RPP} and \ref{thm:selbergRPP}).
\end{enumerate}  

Before we get to the specific subsections,  we need some notation and 
definitions.

For a partition $\lambda$ with $\ell(\lambda)\le n$, let
$(\delta_{n+1}+\lambda)^*$ be the shifted Young diagram obtained from
$\lambda$ by attaching the shifted staircase $\delta_{n+1}=(n,n-1,\dots,1)$.

\begin{defn}
  Let $\lambda$ be a Young diagram or shifted Young diagram.  A
  \emph{reverse plane partition} of shape $\lambda$ is a filling of
  $\lambda$ with non-negative integers which are weakly increasing along rows
  and columns. We denote by $\RPP(\lambda)$ the set of reverse plane
  partitions of shape $\lambda$, see Figure~\ref{fig:2RPPs}. 
\end{defn}

\begin{defn}
  Let $\lambda$ be a partition or a shifted partition. The
  \emph{$k$-diagonal} of $\lambda$ is the set of cells in $\lambda$
  which are in row $i$ and column $j$ with $j-i=k$.  For
  $T\in\RPP(\lambda)$, the \emph{reverse diagonal} of $T$ is the
  sequence $\rdiag(T)=(\mu_1,\dots,\mu_n)$ obtained by reading the
  entries of $T$ in the $0$-diagonal of $\lambda$ from southeast to
  northwest. Also the \emph{trace} is defined by $\tr(T)=\mu_1+\dots+\mu_n$,
  and the sum of entries of $T$ is denoted $|T|.$
\end{defn}

\begin{figure}
  \centering
\includegraphics{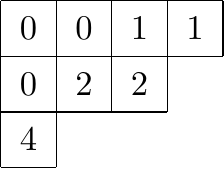} \qquad
\includegraphics{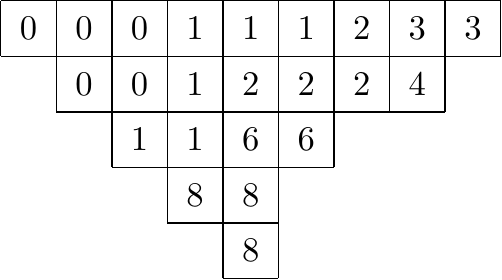} 
  \caption{The left diagrams is an element in $\RPP((4,3,1))$ and the
    right diagram is an element in $\RPP((\delta_{5+1}+(4,3,1))^*)$.}
\label{fig:2RPPs}
\end{figure}

\begin{exam}
  Let $T_1$ be the left diagram and $T_2$ the right diagram in
  Figure~\ref{fig:2RPPs}. Then $\tr(T_1)=2$, $\tr(T_2)=17,$ 
  $|T_1|=10,$ $|T_2|=60,$ 
  $\rdiag(T_1)=(2,0)$ and
  $\rdiag(T_2)=(8,8,1,0,0)$. Note that the reverse diagonal of a
  reverse plane partition is always a partition.
\end{exam}

\begin{defn}\label{defn:schur_poset}
  Let $\lambda$ be a partition with $\ell(\lambda)\le n$.  For
  $0\le k\le \lambda_1+n-1$, let $a_k$ be the size of the $k$-diagonal
  of $(\delta_{n+1}+\lambda)^*$. We label the cells in the
  $k$-diagonal $(\delta_{n+1}+\lambda)^*$ by
  $x^{(k)}_1,\dots,x^{(k)}_{a_k}$ from southeast to northwest.  The
  \emph{0-Schur poset} $P^0_{\schur}(n,\lambda)$ is the poset on
\[
\{x^{(k)}_i: 0\le k\le \lambda_1+n-1,\quad 1\le i\le a_k\}
\]
with relations $x^{(k_1)}_{i_1}\le x^{(k_2)}_{i_2}$ if the cell
$x^{(k_2)}_{i_2}$ is weakly northwest of the cell $x^{(k_1)}_{i_1}$ in
$(\delta_{n+1}+\lambda)^*$.  See Figure~\ref{fig:schur}.  Then we
define the \emph{Schur poset} $P_{\schur}(n,\lambda)$ to be the poset
obtained from the \emph{0-Schur poset} $P^0_{\schur}(n,\lambda)$
by removing the elements $x_1^{(0)}, \dots, x_n^{(0)}$.
\end{defn}

\begin{figure}
  \centering
\includegraphics{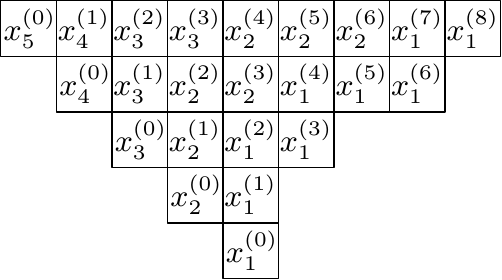}\qquad \qquad
\includegraphics{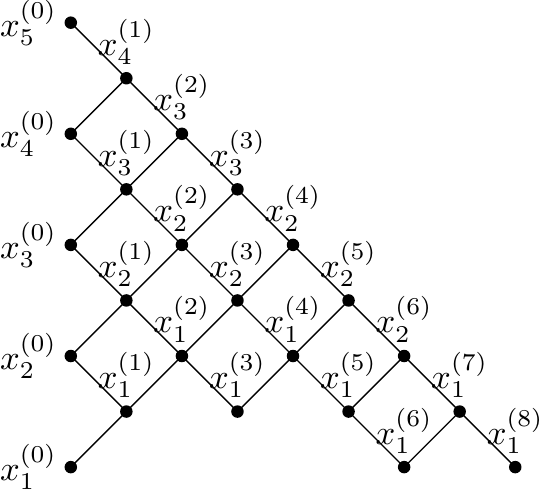}
\caption{The left diagram shows the labels of the cells of
  $(\delta_{n+1}+\lambda)^*$, and the right diagram shows the Hasse
  diagram of the 0-Schur poset $P^0_{\schur}(n,\lambda)$ for $n=5$ and
  $\lambda=(4,3,1)$.}
\label{fig:schur}
\end{figure}
 
Note that we have
\begin{align*}
P_{\schur}(n,\lambda) &\cong P^0_{\schur}(n-1,\lambda)
\qquad \mbox{if $\ell(\lambda)\le n-1$,}\\
P_{\schur}(n,\lambda) &\cong P^0_{\schur}(n,\lambda-(1^n))
\qquad \mbox{if $\ell(\lambda)= n$.}
\end{align*}

The next proposition shows that the Schur function $s_\lambda(x)$ can
be written as a $q$-volume of a truncated order polytope of a Schur
poset up to a constant factor.

\begin{prop}\label{prop:schur_poset}
  Let $\lambda$ be a partition with $\ell(n)\le n$ and $a_k$ the size
  of the $k$-diagonal of $(\delta_{n+1}+\lambda)^*$.  Let
  $x=(x_1,\dots,x_n)$ be a sequence of fixed real numbers
  $x_1<x_2<\dots<x_n$ and 
\[
I =\left\{
  \begin{array}{ll}
 \{(x^{(k)}_i)\in \mathbb R^{\binom n2 +|\lambda|}: 
x_j \le x_j^{(1)} \le x_{j+1}, 1\le j\le n-1\},
    & \mbox{if $\ell(\lambda)\le n-1$,}\\
 \{(x^{(k)}_i)\in \mathbb R^{\binom n2 +|\lambda|}: 
x_{j-1} \le x_j^{(1)} \le x_{j}, 1\le j\le n, x_0=0\},
    & \mbox{if $\ell(\lambda)=n$,}
  \end{array} \right.
\]
where $(x^{(k)}_i)$ means a point in
$\mathbb R^{\binom n2 +|\lambda|}$ with indices
$1\le k\le \lambda_1+n-1$ and $1\le i\le a_k$.  Then 
\[
s_\lambda(\vec x)\DeltaBar(\vec x) = \prod_{j=1}^{n}[\lambda_j+n-j]_q!
\int_{\order_I(P_{\schur}(n,\lambda))} 
d_q\vec x^{(\lambda_1+n-1)} d_q\vec x^{(\lambda_1+n-2)} \cdots
d_q\vec x^{(1)},
\]
where $d_qx^{(k)} = d_qx_1^{(k)}\cdots d_qx_{a_k}^{(k)}$. 
\end{prop}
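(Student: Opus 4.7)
The plan is to apply Lemmas~\ref{lem:schur1} and \ref{lem:schur2} iteratively, moving outward from the $0$-diagonal of $(\delta_{n+1}+\lambda)^*$ to the $(\lambda_1+n-1)$-diagonal. Introduce variables $\vec x^{(k)}=(x^{(k)}_1,\dots,x^{(k)}_{a_k})$ for the $k$-th diagonal, with $\vec x^{(0)}=\vec x$ held fixed, and maintain a residual partition $\mu^{(k)}$ (with $\mu^{(0)}=\lambda$) such that after $k$ applications the integrand takes the form $s_{\mu^{(k)}}(\vec x^{(k)})\DeltaBar(\vec x^{(k)})$, multiplied into the previously introduced $q$-integrations.

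At each stage exactly one of two cases arises, forced by the shape $(\delta_{n+1}+\lambda)^*$. If $a_{k+1}=a_k-1$ then $\mu^{(k)}_{a_k}=0$, so I apply Lemma~\ref{lem:schur1} to the current Schur factor, producing a constant $\prod_{i=1}^{a_{k+1}}[\mu^{(k)}_i+a_k-i]_q$ and leaving $\mu^{(k+1)}=\mu^{(k)}$; if $a_{k+1}=a_k$ then $\mu^{(k)}_{a_k}\ge1$, and Lemma~\ref{lem:schur2} produces the same shape of constant but with $\mu^{(k+1)}=\mu^{(k)}-(1^{a_k})$. The interlacing $\vec x^{(k+1)}\prec \vec x^{(k)}$ produced at each step matches precisely the covering relations of $P_{\schur}(n,\lambda)$ between diagonals $k$ and $k+1$; combined with the box $I$, which records $\vec x^{(1)}\prec \vec x$ with the appropriate convention according to whether $\ell(\lambda)<n$ or $\ell(\lambda)=n$ (the latter using $x_0=0$ as the natural lower bound arising from Lemma~\ref{lem:schur2}), the accumulated inequalities cut out exactly $\order_I(P_{\schur}(n,\lambda))$. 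After $\lambda_1+n-1$ iterations $\mu^{(\lambda_1+n-1)}=\emptyset$, the outermost diagonal has a single cell (so $\DeltaBar=1$ and $s_\emptyset=1$), and what remains is the $q$-volume on the right side of the proposition. The stated order of integration $d_q\vec x^{(\lambda_1+n-1)}\cdots d_q\vec x^{(1)}$ emerges because each substitution nests its new differentials innermost.

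The main obstacle will be checking that the accumulated constants telescope correctly, namely
\[
\prod_{k=0}^{\lambda_1+n-2}\prod_{i=1}^{a_{k+1}}[\mu^{(k)}_i+a_k-i]_q=\prod_{j=1}^n[\lambda_j+n-j]_q!.
\]
I would handle this by tracking each row index $j$ separately: the index $i=j$ contributes a factor at step $k$ precisely when $a_{k+1}\ge j$, and in both cases the value $\mu^{(k)}_j+a_k-j$ decreases by exactly $1$ from one such step to the next (in Case~(A) via the drop in $a_k$, in Case~(B) via the drop in $\mu^{(k)}_j$). This value starts at $\lambda_j+n-j$ when $k=0$, and the number of steps with $a_{k+1}\ge j$ is easily checked to equal $\lambda_j+n-j$, so the factors for row $j$ multiply to $[1]_q[2]_q\cdots[\lambda_j+n-j]_q=[\lambda_j+n-j]_q!$; taking the product over $j$ then completes the proof.
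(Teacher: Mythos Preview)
Your proposal is correct and is essentially the paper's own argument, just organized as a direct iteration on the diagonal index $k$ rather than as an induction on $n$ (the paper proves the equivalent Lemma~\ref{lem:schurDelta} by induction: apply Lemma~\ref{lem:schur2} $\lambda_n$ times, then Lemma~\ref{lem:schur1} once, then recurse). Unrolling that induction yields exactly your sequence of applications of Lemmas~\ref{lem:schur1} and \ref{lem:schur2}, and your explicit telescoping bookkeeping $\mu^{(k)}_j+a_k-j=\lambda_j+n-j-k$ is just a more hands-on version of how the paper collects the constants through the inductive hypothesis.
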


We will prove Lemma~\ref{lem:schurDelta} which is equivalent to
Proposition~\ref{prop:schur_poset}.  The idea is to introduce new sets
of variables which are interlacing with the previous sets of
variables.

\begin{lem}
\label{lem:schurDelta}
  Let $\lambda$ be a partition with $\ell(n)\le n$ and $a_k$ the size
  of the $k$-diagonal of $(\delta_{n+1}+\lambda)^*$.  For
  $0\le k\le \lambda_1+n-1$, we consider sequences of variables
  $\vec x^{(k)} = (x^{(k)}_1,\dots,x^{(k)}_{a_k})$ with $x^{(0)}_i=x_i$.
Then for the set
\[
Q =\{\vec x^{(\lambda_1+n-1)}\prec \cdots \prec \vec x^{(0)}\}
\]
of inequalities, we have
\begin{equation}
  \label{eq:33}
s_\lambda(\vec x)\DeltaBar(\vec x) = \prod_{j=1}^{n}[\lambda_j+n-j]_q!
\int_Q d_q\vec x^{(\lambda_1+n-1)} d_q\vec x^{(\lambda_1+n-2)} \cdots
d_q\vec x^{(1)},
\end{equation}
where $d_qx^{(k)} = d_qx_1^{(k)}\cdots d_qx_{a_k}^{(k)}$. 
\end{lem}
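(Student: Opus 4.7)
The plan is to prove \eqref{eq:33} by induction on $N = \lambda_1 + n - 1$, the total number of layers to integrate. The base case $N = 0$ forces $n = 1$ and $\lambda = (0)$, in which case both sides of \eqref{eq:33} equal $1$ (the left-hand side because $s_{(0)}(x_1) = 1$ and $\DeltaBar$ is an empty product; the right-hand side as an empty product times an empty integral). For the inductive step, I peel off the outermost layer $\vec{x}^{(1)}$ using Lemma~\ref{lem:schur1} or Lemma~\ref{lem:schur2}, then invoke the hypothesis on the resulting integrand, whose layer count has dropped by $1$.

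The choice of lemma is forced by $\ell(\lambda)$. If $\ell(\lambda) < n$, I apply Lemma~\ref{lem:schur1} with $y = \vec{x}^{(0)}$ and $z = \vec{x}^{(1)}$, producing the factor $\prod_{i=1}^{n-1}[\lambda_i + n - i]_q$ and an integrand of the same form $s_\lambda(\vec{x}^{(1)}) \DeltaBar(\vec{x}^{(1)})$ but with only $n-1$ variables (noting $\ell(\lambda) \le n - 1$), so the inductive hypothesis applies. If $\ell(\lambda) = n$, I apply Lemma~\ref{lem:schur2}, producing the factor $\prod_{i=1}^{n}[\lambda_i + n - i]_q$ and reducing the Schur function to $s_{\lambda - (1^n)}(\vec{x}^{(1)})$; since $\lambda_1$ drops by $1$, again $N$ decreases by one. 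In both cases, the next layer $\vec{x}^{(k)}$ has size $a_k = |\{i : \lambda_i + n - i \ge k\}|$, so the sizes produced by the iteration match the diagonal sizes of $(\delta_{n+1}+\lambda)^*$ (each step either preserves or decreases the current variable count by one, consistent with $a_k - a_{k-1} \in \{-1, 0\}$).

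The product of factors collected along the iteration telescopes via the identity
\begin{equation*}
[\lambda_i + n - i]_q \cdot [\lambda_i + n - 1 - i]_q! = [\lambda_i + n - i]_q!,
\end{equation*}
valid whenever $\lambda_i + n - i \ge 1$, with $[0]_q! = 1$ absorbing any trailing $\lambda_i = 0$ term in the first case. This yields the claimed constant $\prod_{j=1}^n [\lambda_j + n - j]_q!$. The main obstacle to watch out for is matching the integration order: the statement demands $d_q\vec{x}^{(\lambda_1+n-1)} \cdots d_q\vec{x}^{(1)}$, which by the convention of Definition~\ref{defn:polytope} places $\vec{x}^{(\lambda_1+n-1)}$ innermost. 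Each iterative application of Lemma~\ref{lem:schur1} or \ref{lem:schur2} nests a new integral \emph{inside} the previously introduced integrals, so $\vec{x}^{(k)}$ becomes more innermost as $k$ grows, agreeing with the stated order.
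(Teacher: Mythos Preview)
Your proof is correct and follows essentially the same approach as the paper: both peel off the interlacing layers one at a time using Lemma~\ref{lem:schur1} when $\ell(\lambda)<n$ and Lemma~\ref{lem:schur2} when $\ell(\lambda)=n$, and both verify that the resulting diagonal sizes satisfy $a'_k=a_{k+1}$ so that the relabeled integral matches the inductive hypothesis. The only organizational difference is the induction variable: you induct on $N=\lambda_1+n-1$ and apply exactly one lemma per step, whereas the paper inducts on $n$, so in its \textsl{CASE 2} it must first iterate Lemma~\ref{lem:schur2} exactly $\lambda_n$ times (reducing $\lambda$ to $\lambda-(\lambda_n^{\,n})$) before it can invoke \textsl{CASE 1} and the hypothesis for $n-1$. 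Your packaging is arguably cleaner since every step uniformly decreases $N$ by one; the paper's version makes slightly more explicit how the factorial constant accumulates in blocks.
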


\begin{proof}
  We use induction on $n$. If $n=1$, the left hand side of
  \eqref{eq:33} is
  $s_{\lambda}(x_1) = x_1^{\lambda_1}$. By Corollary~\ref{cor:maj},
  the right hand side of \eqref{eq:33} is
\[
[\lambda_1]_q! \int_{0\le x_1^{(\lambda_1)}\le \cdots\le x_1^{(1)}\le
  x_1} d_q x_1^{(\lambda_1)} \cdots d_q x_1^{(1)}
=x_1^{\lambda_1}.
\]
Now suppose that \eqref{eq:33} is true for $n-1$ and consider $n$. 

\textsl{CASE 1}: $\ell(\lambda)\le n-1$. By Lemma~\ref{lem:schur1}, we
have
\begin{equation}
  \label{eq:2}
s_\lambda(x) \DeltaBar(x)
=\prod_{j=1}^{n-1} [\lambda_j+n-j]_q
\int_{w\prec x} s_\lambda(w) \DeltaBar(w) 
d_q w_1 \cdots d_q w_{n-1}. 
\end{equation}
Since $\ell(\lambda)\le n-1$ and $w=(w_1,\dots,w_{n-1})$ has $n-1$
variables, by the induction hypothesis, we have
\begin{equation}
  \label{eq:3}
s_\lambda(w)\DeltaBar(w) = \prod_{j=1}^{n-1}[\lambda_j+n-j-1]_q!
\int_{Q'} d_qw^{(\lambda_1+n-2)} d_qw^{(\lambda_1+n-3)} \cdots d_qw^{(1)},
\end{equation}
where $w^{(k)}=(w_1^{(k)},\dots,w_{a'_k}^{(k)})$ is a sequence of
variables for $0\le k\le \lambda_1+n-2$ with $w^{(0)}_i = w_i$, 
$a'_{k}$ is the size of the $k$-diagonal of
$(\delta_{n}+\lambda)^*$, and
\[
Q' =\{w^{(\lambda_1+n-2)}\prec \cdots \prec w^{(0)}\}.
\]
 Note that $a'_k=a_{k+1}$. 
By substituting $w_i^{(k)}=x_i^{(k+1)}$, we have
\begin{multline}\label{eq:7}
\int_{w\prec x} 
\left(  \int_{Q'} d_qw^{(\lambda_1+n-2)} d_qw^{(\lambda_1+n-3)} \cdots
d_qw^{(1)}\right) d_q w_1 \cdots d_q w_{n-1}  \\
=\int_Q d_qx^{(\lambda_1+n-1)} d_qx^{(\lambda_1+n-2)} \cdots d_qx^{(1)}.
\end{multline}
By \eqref{eq:2}, \eqref{eq:3}, and \eqref{eq:7}, we have
\[
s_\lambda(x) \DeltaBar(x) = \prod_{j=1}^{n-1}[\lambda_j+n-j]_q!
\int_Q d_qx^{(\lambda_1+n-1)} d_qx^{(\lambda_1+n-2)} \cdots d_qx^{(1)}.
\]
Since $\lambda_n=0$, we have
\[
\prod_{j=1}^{n-1}[\lambda_j+n-j]_q! =
\prod_{j=1}^{n}[\lambda_j+n-j]_q!, 
\]
which implies that \eqref{eq:33} is also true for $n$.

\textsl{CASE 2}: $\ell(\lambda) = n$. Let $\lambda_n=m$.  Note that
for $0\le k\le m$, we have $x^{(k)}=(x^{(k)}_1,\dots,x^{(k)}_n)$.  By
Lemma~\ref{lem:schur2}, for $0\le k\le m-1$, we have
\begin{equation}
  \label{eq:8}
s_{\lambda-(k^n)}(x^{(k)}) \DeltaBar(x^{(k)})
=\prod_{j=1}^{n} [\lambda_j-k+n-j]_q
\int_{x^{(k+1)}\prec x^{(k)}} s_{\lambda-((k+1)^n)}(x^{(k+1)}) \DeltaBar(x^{(k+1)}) 
d_q x^{(k+1)}. 
\end{equation}
Applying \eqref{eq:8} for $k=0,1,\dots,m-1$, iteratively, we get
\begin{equation}
  \label{eq:14}
s_{\lambda}(x) \DeltaBar(x)
=\prod_{k=0}^{m-1}\prod_{j=1}^{n} [\lambda_j-k+n-j]_q
\int_{x^{(m)}\prec \cdots\prec x^{(0)}} s_{\lambda-(m^n)}(x^{(m)}) \DeltaBar(x^{(m)}) 
d_q x^{(m)}\cdots d_q x^{(1)}.   
\end{equation}
Since $\lambda-(m^n)$ has length at most $n-1$, by 
\textsl{CASE 1}, we get
\begin{equation}
  \label{eq:31}
s_{\lambda-(m^n)}(x^{(m)}) \DeltaBar(x^{(m)}) 
= \prod_{j=1}^{n}[\lambda_j-m+n-j]_q!
\int_{Q'} d_qx^{(\lambda_1+n-1)} d_qx^{(\lambda_1+n-2)} \cdots d_qx^{(m+1)},
\end{equation}
where $Q' = \{x^{(\lambda_1+n-1)}\prec x^{(\lambda_1+n-2)}\prec \cdots \prec x^{(m)}\}$.
By \eqref{eq:14} and \eqref{eq:31}, we have that $s_{\lambda}(x)
\DeltaBar(x)$ is equal to 
\[
\left(\prod_{k=0}^{m-1}\prod_{j=1}^{n}
[\lambda_j-k+n-j]_q \right) \prod_{j=1}^{n}[\lambda_j-m+n-j]_q!
\int_Q d_qx^{(\lambda_1+n-1)} d_qx^{(\lambda_1+n-2)} \cdots d_qx^{(1)},
\]
which is equal to the right hand side of \eqref{eq:33}.  Thus
\eqref{eq:33} is also true for $n$. By induction \eqref{eq:33} is true
for all nonnegative integers $n$. 
\end{proof}

\subsection{Shifted reverse plane partitions with fixed diagonal
  entries}

The main result in this subsection, Theorem~\ref{thm:rpp}, reinterprets
the $q$-integral evaluation in Lemma~\ref{lem:schurDelta} as a
generating function for shifted reverse plane partitions.

\begin{thm}
\label{thm:rpp}
For $\lambda,\mu\in\Par_n$ we have
\[
\sum_{\substack{T\in \RPP ( (\delta_{n+1}+\lambda)^* ) \\
\rdiag(T)=\mu}}  q^{|T|}
=  \frac{q^{-b(\delta_{n+1}+\lambda)}}
{\prod_{j=1}^{n}(q;q)_{\lambda_j+n-j}}
q^{|\mu+\delta_n|} s_\lambda(q^{\mu+\delta_n})\DeltaBar(q^{\mu+\delta_n}).
\]
\end{thm}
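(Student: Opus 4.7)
The plan is to evaluate $s_\lambda(q^{\mu+\delta_n})\DeltaBar(q^{\mu+\delta_n})$ using Proposition~\ref{prop:schur_poset} and then identify the resulting combinatorial sum with the RPP generating function through a diagonal shift.

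First, since $\mu\in\Par_n$ the exponents $\mu_i+n-i$ are strictly decreasing in $i$, so $\vec x=q^{\mu+\delta_n}$ satisfies $x_1<\cdots<x_n$ and Proposition~\ref{prop:schur_poset} applies (in either of its two cases, depending on whether $\ell(\lambda)\le n-1$ or $\ell(\lambda)=n$) to give
\[
s_\lambda(q^{\mu+\delta_n})\DeltaBar(q^{\mu+\delta_n})=\prod_{j=1}^{n}[\lambda_j+n-j]_q!\int_{\order_I(P_\schur(n,\lambda))} d_q\vec x^{(\lambda_1+n-1)}\cdots d_q\vec x^{(1)},
\]
where $I$ records the boundary conditions on $\vec x^{(1)}$ coming from $\vec x=q^{\mu+\delta_n}$. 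I would next apply Theorem~\ref{thm:order} with $f\equiv 1$ to expand the right-hand integral as $(1-q)^{\binom{n}{2}+|\lambda|}\sum_\sigma q^{|\sigma|}$, summed over all $(P_\schur(n,\lambda),\omega)$-partitions $\sigma$ whose values on the main diagonal are fixed to $\hat\sigma(r,r)=\mu_{n+1-r}+r-1$, where $\omega$ is the labeling induced by the prescribed order of integration. Using $[m]_q!=(q;q)_m/(1-q)^m$ collapses the $(1-q)$-powers, leaving an overall factor $\prod_{j=1}^n(q;q)_{\lambda_j+n-j}$.

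The heart of the argument is interpreting these $(P,\omega)$-partitions as reverse plane partitions. The integration order starts at the northeast corner, sweeps through diagonals from the highest $k$ down to $k=1$, and proceeds from southeast to northwest within each diagonal. Comparing this with the Schur-poset order (``weakly northwest is larger''), one sees that cells on higher diagonals are labeled earlier than cells on lower diagonals, and within a diagonal the SE cells precede the NW cells. By Proposition~\ref{firsttechlemma} this forces the $(P,\omega)$-partition inequalities to be \emph{weak} along rows, $\sigma(r,c)\ge\sigma(r,c-1)$, but \emph{strict} down columns, $\sigma(r,c)>\sigma(r-1,c)$. Setting $T(r,c):=\hat\sigma(r,c)-(r-1)$ converts both conditions into weak inequalities, so $T\in\RPP((\delta_{n+1}+\lambda)^*)$; moreover $T(r,r)=\mu_{n+1-r}$ yields $\rdiag(T)=\mu$, and the constraints from $I$ on $\vec x^{(1)}$ precisely encode the RPP inequalities between the $1$-diagonal and the fixed $0$-diagonal. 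Non-negativity of $T$ follows by telescoping the strict column condition from row~$1$, and the correspondence is clearly reversible.

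Finally, track sizes: row $r$ contains $n+1-r+\lambda_r$ cells, so $\sum_{(r,c)\in(\delta_{n+1}+\lambda)^*}(r-1)=b(\delta_{n+1}+\lambda)$, and the boundary diagonal contributes $\sum_{i=1}^n(\mu_i+n-i)=|\mu+\delta_n|$ to $\hat\sigma$. This gives $|\sigma|=|T|+b(\delta_{n+1}+\lambda)-|\mu+\delta_n|$; substituting and solving for $\sum_T q^{|T|}$ yields the stated formula. The main obstacle is the strictness analysis in the previous paragraph; once it is in place, the remainder is routine powers-of-$q$ bookkeeping.
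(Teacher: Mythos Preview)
Your proposal is correct and follows essentially the same approach as the paper: specialize Proposition~\ref{prop:schur_poset} at $\vec x=q^{\mu+\delta_n}$, expand the integral via Theorem~\ref{thm:order} as a sum over $(P_{\schur}(n,\lambda),\omega)$-partitions, observe that the labeling makes columns strict and rows weak, and then shift row $r$ by $r-1$ to obtain elements of $\RPP((\delta_{n+1}+\lambda)^*)$ with $\rdiag=\mu$. Your size bookkeeping $|\sigma|=|T|+b(\delta_{n+1}+\lambda)-|\mu+\delta_n|$ matches the paper's exactly; the only cosmetic difference is that you justify the strictness pattern explicitly via the label/poset comparison, whereas the paper simply asserts the resulting arrays are column-strict.
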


\begin{proof}
  Let 
\[
x=(x_1,x_2,\dots,x_n) = 
(q^{\mu_1+n-1},q^{\mu_2+n-2},\dots,q^{\mu_n})=q^{\mu+\delta_n}.
\]
Following the notations
  in Proposition~\ref{prop:schur_poset}, we have
\[
s_\lambda(x)\DeltaBar(x) = \prod_{j=1}^{n}[\lambda_j+n-j]_q!
\int_{\order_I(P_{\schur}(n,\lambda))} d_q x^{(\lambda_1+n-1)} d_q x^{(\lambda_1+n-2)} \cdots
d_q x^{(1)}.
\]

Let $\omega$ be the labeling of $P_{\schur}(n,\lambda)$ defined by
$\omega(x_i^{(k)})=j$ if $d_qx_i^{(k)}$ is in the $j$th position of
$d_q x^{(\lambda_1+n-1)} d_q x^{(\lambda_1+n-2)} \cdots d_q x^{(1)}$.
Then by Theorem~\ref{thm:order}, we have
\[
\int_{\order_I(P_{\schur}(n,\lambda))} d_q x^{(\lambda_1+n-1)} d_q x^{(\lambda_1+n-2)} \cdots
d_q x^{(1)} = (1-q)^{\binom n2 + |\lambda|}
\sum_{\sigma} q^{|\sigma|},
\]
where the sum is over all $(P_{\schur}(n,\lambda),\omega)$-partitions
$\sigma$ such that 
\[
  \begin{array}{ll}
\mu_j+n-j >\sigma(x_j^{(1)}) \ge \mu_{j+1}+n-j-1,\quad 1\le j\le n-1
    & \mbox{if $\ell(\lambda)\le n-1$,}\\
\mu_{j-1}+n-j+1> \sigma(x_j^{(1)}) \ge \mu_{j}+n-j,\quad 1\le j\le n
    & \mbox{if $\ell(\lambda)=n$,}
  \end{array}
\]
where $\mu_0 =\infty$.  There is an obvious way to regard such
$\sigma$ as a reverse plane partition
$U\in \RPP((\delta_{n+1}+\lambda)^*)$ with
$|\sigma|=|U|-|\mu+\delta_n|$ such that the
$\rdiag(U)=q^{\mu+\delta_n}$ and in each column the entries are
strictly increasing from top to bottom. Thus, by
Theorem~\ref{thm:order}, we have
\[
\int_{\order_I(P_{\schur}(n,\lambda))} d_q x^{(\lambda_1+n-1)} d_q x^{(\lambda_1+n-2)} \cdots
d_q x^{(1)}
=(1-q)^{|\lambda|+\binom{n}2} \sum_{U} q^{|U|-\sum_{i=1}^n(\mu_i+n-i)},
\]
where the sum is over all column-strict reverse plane partitions
$U\in \RPP((\delta_{n+1}+\lambda)^*)$ with
$\rdiag(U)=q^{\mu+\delta_n}$. For such $U$, let $T$ be the reverse
plane partition obtained from it by decreasing the entries in row $i$
by $i-1$ for $i=1,2,\dots,n$. Then
$T\in \RPP ( (\delta_{n+1}+\lambda)^* )$, $\rdiag(T)=\mu$, and
$|U|=|T|+b(\delta_{n+1}+\lambda)$.  Summarizing these, we obtain
\[
s_\lambda(x)\DeltaBar(x) = \prod_{j=1}^{n}[\lambda_j+n-j]_q!
(1-q)^{|\lambda|+\binom{n}2} \sum_{\substack{T\in \RPP ( (\delta_{n+1}+\lambda)^* ) \\
\rdiag(T)=\mu}} q^{|T|+b(\delta_{n+1}+\lambda) -\sum_{i=1}^n(\mu_i+n-i)},
\]
which is equivalent to the theorem.
\end{proof}

\subsection{Generalized Gelfand-Tsetlin patterns}

In this subsection we restate Theorem~\ref{thm:rpp} in terms of
generalized Gelfand-Tsetlin patterns, see Theorem~\ref{thm:GT}.

\begin{defn}
  For $\lambda,\mu\in\Par_n$, an \emph{$(n,\lambda,\mu)$-Gelfand-Tsetlin
    pattern} is a triangular array
\[
\{(x_{i,j}): 1\le j\le n,\quad 1-\lambda_j\le i \le n+1-j\}
\]
of nonnegative integers such that $x_{i,j+1}\ge x_{i,j} \ge x_{i+1,j}$
and $x_{i,n+1-i}=\mu_i$. We assume that the inequality holds if any
index lies outside the stated domain. We denote by $\GT_n(\lambda,\mu)$ the
set of $(n,\lambda,\mu)$-Gelfand-Tsetlin patterns.
\end{defn}

\begin{exam}
Let $n=4$, $\lambda=(3,1)$ and $\mu=(3,2,2,1)$. Then the coordinates
of an $(n,\lambda,\mu)$-Gelfand-Tsetlin pattern are arranged as
follows.
\[
\begin{matrix}
&&&x_{1,4}&&&\\
&&&x_{1,3} &x_{2,3}&&\\
&&x_{0,2} &x_{1,2} &x_{2,2}&x_{3,2}&\\
x_{-2,1} &x_{-1,1}&x_{0,1}&x_{1,1}&x_{2,1}&x_{3,1}&x_{4,1}
\end{matrix}
\]
An example of an $(n,\lambda,\mu)$-Gelfand-Tsetlin is shown below.
\[
\begin{matrix}
&&&3&&&\\
&&&3&2&&\\
&&4&3&2&2&\\
7&5&3&3&2&1&1
\end{matrix}
\]
\end{exam}

Note that if $\lambda=\varnothing$, then $\GT_n(\varnothing,\mu)$ is
basically the set of Gelfand-Tsetlin patterns, see \cite[(7.37),
p. 313]{EC2}.  There is a well known bijection \cite[p. 314]{EC2}
between Gelfand-Tsetlin patterns and column strict tableaux which
gives
\begin{equation}
  \label{eq:gt}
\sum_{T\in\GT_n(\varnothing,\mu)} q^{|T|} = 
q^{|\mu|} s_\mu(q^{\delta_n}).
\end{equation}

Note also that, if rotated by $180^\circ$, the
$(\lambda,\mu)$-Gelfand-Tsetlin patterns are the same as
the reverse plane partitions of shape $(\delta_{n+1}+\lambda)^*$ with
reverse diagonal entries given by $\mu$. Thus Theorem~\ref{thm:rpp} can be
restated as Theorem~\ref{thm:GT}, which generalizes \eqref{eq:gt} to
arbitrary $\lambda$.

\begin{thm}\label{thm:GT}
 For $\lambda,\mu\in\Par_n$, we have
\[
\sum_{T\in\GT_n(\lambda,\mu)} q^{|T|}
=  q^{|\mu|-b(\lambda)}
 s_\mu(q^{\delta_n}) s_\lambda(q^{\mu+\delta_n})
\prod_{j=1}^{n}\frac{(q;q)_{n-j}}{(q;q)_{\lambda_j+n-j}}.
\]
Equivalently, 
\[
\sum_{T\in\GT_n(\lambda,\mu)} q^{|T|}
=  \frac{q^{|\mu|-b(\delta_{n}+\lambda)}}
{\prod_{j=1}^{n}(q;q)_{\lambda_j+n-j}}
s_\lambda(q^{\mu+\delta_n})\DeltaBar(q^{\mu+\delta_n}).
\]
\end{thm}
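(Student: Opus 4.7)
The plan is to derive Theorem~\ref{thm:GT} from Theorem~\ref{thm:rpp} via the $180^\circ$ rotation bijection mentioned in the paragraph preceding the theorem, and then to pass between its two stated forms using a principal specialization identity for Schur functions.

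First I would make explicit the map $\Phi\colon \GT_n(\lambda,\mu) \to \{T \in \RPP((\delta_{n+1}+\lambda)^*) : \rdiag(T) = \mu\}$ obtained by rotating the triangular array by $180^\circ$. The $j$th column of a pattern contains $\lambda_j + (n+1-j)$ entries, which matches the length of the $j$th row of the shifted shape $(\delta_{n+1}+\lambda)^*$. Under rotation, the interlacing $x_{i,j+1} \ge x_{i,j} \ge x_{i+1,j}$ translates into weak increase along rows and down columns of the image tableau, and the boundary condition $x_{i,n+1-i} = \mu_i$ places $\mu_i$ at the main-diagonal cell $(n+1-i,n+1-i)$, so reading the main diagonal from southeast to northwest recovers $\rdiag(\Phi(x)) = \mu$. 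Since $\Phi$ preserves the sum of entries, applying Theorem~\ref{thm:rpp} through $\Phi$ immediately yields the second form of Theorem~\ref{thm:GT}.

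Next I would derive the first form from the second. Using the elementary identity $b(\delta_n + \lambda) = b(\delta_n) + b(\lambda)$, this reduces to proving
\[
q^{-b(\delta_n)}\,\DeltaBar(q^{\mu+\delta_n}) = s_\mu(q^{\delta_n})\prod_{j=1}^n (q;q)_{n-j}.
\]
The heart of this is the specialization formula
\[
s_\mu(q^{\delta_n}) = \frac{\DeltaBar(q^{\mu+\delta_n})}{\DeltaBar(q^{\delta_n})},
\]
which I would prove from the alternant definition $s_\mu = a_{\mu+\delta_n}/a_{\delta_n}$ together with the transpose symmetry
\[
a_{\mu+\delta_n}(q^{\delta_n}) = \det\bigl(q^{(n-j)(\mu_i+n-i)}\bigr)_{i,j} = a_{\delta_n}(q^{\mu+\delta_n}),
\]
obtained by swapping the roles of $i$ and $j$ in the defining matrix. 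A direct Vandermonde evaluation gives $\DeltaBar(q^{\delta_n}) = q^{b(\delta_n)}\prod_{j=1}^n (q;q)_{n-j}$, by pulling out the factor $q^{n-j}$ from each entry of column $j$ of $\Delta(q^{\delta_n})$ and collecting exponents. Multiplying through yields the claimed identity.

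The hard part will be the bookkeeping in the bijection step: the Gelfand--Tsetlin indices $i$ may be non-positive while the RPP cell coordinates remain positive, and one must confirm that the convention that the inequality $x_{i,j+1} \ge x_{i,j} \ge x_{i+1,j}$ holds whenever an index lies outside the stated domain matches the implicit boundary behaviour of an RPP at the southwest edge of $(\delta_{n+1}+\lambda)^*$. Once this identification is carefully checked, everything else is either a direct substitution into Theorem~\ref{thm:rpp} or a short manipulation of alternants and Vandermondes.
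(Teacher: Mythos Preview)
Your approach matches the paper's: it too invokes the $180^\circ$ rotation to identify $\GT_n(\lambda,\mu)$ with shifted RPPs of shape $(\delta_{n+1}+\lambda)^*$ and reverse diagonal $\mu$, reads off the second form from Theorem~\ref{thm:rpp} (after the easy simplification $q^{|\mu+\delta_n|-b(\delta_{n+1}+\lambda)}=q^{|\mu|-b(\delta_n+\lambda)}$), and then passes to the first form via the principal specialization $s_\mu(q^{\delta_n})=\DeltaBar(q^{\mu+\delta_n})/\DeltaBar(q^{\delta_n})$. The only difference is that the paper cites this last identity from \cite[(7.105)]{EC2}, whereas you supply a direct alternant/transpose argument; both are fine.
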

\begin{proof}
By Theorem~\ref{thm:rpp}, we have
\[
\sum_{T\in\GT_n(\lambda,\mu)} q^{|T|}
=\sum_{\substack{T\in \RPP ( (\delta_{n+1}+\lambda)^* ) \\
\rdiag(T)=\mu}}  q^{|T|}
=  \frac{q^{-b(\delta_{n+1}+\lambda)}}
{\prod_{j=1}^{n}(q;q)_{\lambda_j+n-j}}
q^{|\mu+\delta_n|} s_\lambda(q^{\mu+\delta_n})\DeltaBar(q^{\mu+\delta_n}).
\]
Then we obtain the theorem using the fact \cite[(7.105)]{EC2}
\[
 s_\mu(q^{\delta_n}) = \prod_{1\le i<j\le n}
\frac{q^{\mu_j+n-j}-q^{\mu_i+n-i}}{q^{i-1}-q^{j-1}}
=\frac{\DeltaBar(q^{\mu+\delta_n})}
{q^{b(\delta_{n})} \prod_{j=1}^{n-1}(q;q)_{j}}.
\]
\end{proof}

\subsection{The trace-generating function for reverse plane partitions}

In this subsection we give two results on reverse plane partitions. 
In Corollary~\ref{cor:rpp2} we give a generating function for 
reverse plane partitions with a fixed shape and diagonal. 
Then we reinterpret an integral of Warnaar as the trace 
generating function for reverse plane partitions of a given shape,
Theorem~\ref{thm:warnaar_rpp} and Corollary~\ref{cor:rpp_nu}.

The first result gives the generating function for 
reverse plane partitions with a fixed shape and diagonal by decomposing 
via the Durfee square. Here, the
\emph{Durfee square} of a partition $\lambda$ is the largest square
containing the cell in row $1$ and column $1$ in $\lambda$.

\begin{cor}
\label{cor:rpp2}
  Let $\lambda=(\lambda_1,\dots,\lambda_n)$, $\mu=(\mu_1,\dots,\mu_n)$
  and $\nu$ the partition obtained from $(n^n)$ by attaching $\lambda$
  to the right and $\mu'$ at the bottom.  Then for a partition
  $\rho=(\rho_1,\dots,\rho_n)$, we have
  \begin{equation}\label{eq:34}
\sum_{\substack{T\in \RPP (\nu) \\
\rdiag(T)=\rho}}  q^{|T|}
=  \frac{q^{\binom n2 -b(\delta_{n+1}+\lambda) -b(\delta_{n+1}+\mu)}
q^{|\rho+\delta_n|}}
{\prod_{j=1}^{n}(q;q)_{\lambda_j+n-j}(q;q)_{\mu_j+n-j}}
s_\lambda(q^{\rho+\delta_n})s_\mu(q^{\rho+\delta_n})
\DeltaBar(q^{\rho+\delta_n})^2. 
  \end{equation}
\end{cor}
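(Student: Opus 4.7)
The plan is to decompose an RPP of shape $\nu$ along its main diagonal (the $0$-diagonal) into an upper-triangular piece and a lower-triangular piece that share exactly the diagonal entries, and then apply Theorem~\ref{thm:rpp} to each piece.

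First I would check the shape identification. By construction, $\nu$ consists of an $n\times n$ square with $\lambda$ glued on the right and $\mu'$ glued on the bottom. The cells $(i,j)$ of $\nu$ with $j\ge i$ form, after the usual shift (row $i$ starts in column $i$), a shifted shape with row lengths $(n+\lambda_1,n-1+\lambda_2,\dots,1+\lambda_n)=\delta_{n+1}+\lambda$, i.e.\ the shifted shape $(\delta_{n+1}+\lambda)^*$. Dually, transposing $\nu$ exchanges the roles of $\lambda$ and $\mu$, so the cells with $j\le i$ in $\nu$ correspond (after transpose, which preserves the RPP condition and the diagonal) to the shifted shape $(\delta_{n+1}+\mu)^*$.

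Next, for $T\in\RPP(\nu)$ with $\rdiag(T)=\rho$, write $T_{up}$ for the restriction of $T$ to cells with $j\ge i$ and $T_{low}$ for the restriction to cells with $j\le i$. Then $T_{up}\in\RPP((\delta_{n+1}+\lambda)^*)$ and (after transposition) $T_{low}\in\RPP((\delta_{n+1}+\mu)^*)$, both with reverse diagonal $\rho$. Since the weakly-increasing condition on $T$ is local, and all the constraints across the diagonal are automatic once the diagonal is fixed, specifying $T$ with $\rdiag(T)=\rho$ is the same as independently specifying $T_{up}$ and $T_{low}$ with the same reverse diagonal $\rho$. The diagonal entries get counted in both pieces, so
\begin{equation*}
|T|=|T_{up}|+|T_{low}|-|\rho|.
\end{equation*}

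Therefore
\begin{equation*}
\sum_{\substack{T\in\RPP(\nu)\\\rdiag(T)=\rho}} q^{|T|}
= q^{-|\rho|}\Biggl(\sum_{\substack{T_{up}\in\RPP((\delta_{n+1}+\lambda)^*)\\\rdiag(T_{up})=\rho}} q^{|T_{up}|}\Biggr)
\Biggl(\sum_{\substack{T_{low}\in\RPP((\delta_{n+1}+\mu)^*)\\\rdiag(T_{low})=\rho}} q^{|T_{low}|}\Biggr).
\end{equation*}
Applying Theorem~\ref{thm:rpp} to each of the two inner sums and multiplying produces the right-hand side of \eqref{eq:34}. The only check left is the power of $q$: the two applications of Theorem~\ref{thm:rpp} contribute $q^{2|\rho+\delta_n|}$, which combined with the $q^{-|\rho|}$ factor yields $q^{|\rho|+2\binom{n}{2}}=q^{\binom{n}{2}+|\rho+\delta_n|}$, matching the claimed exponent.

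The main obstacle is the geometric/shape identification in the first step — being careful that the lower triangular piece, after transposition, really is a shifted RPP of shape $(\delta_{n+1}+\mu)^*$ with the same reverse diagonal $\rho$, including when $\ell(\lambda)$ or $\ell(\mu)$ equals $n$ so that the attached parts extend all the way to the diagonal. Once this matching is set up and one verifies that the RPP inequalities across the diagonal impose no extra constraint beyond the shared diagonal $\rho$, the rest is bookkeeping of the exponents of $q$.
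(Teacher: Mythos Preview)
Your proposal is correct and is essentially the paper's own proof: decompose $T$ along the $0$-diagonal into two shifted reverse plane partitions sharing the diagonal $\rho$, note the factor $q^{-|\rho|}$ from double-counting the diagonal, and apply Theorem~\ref{thm:rpp} (equivalently Theorem~\ref{thm:GT}) to each piece. The exponent bookkeeping you give matches the paper's formula.
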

\begin{proof}
Since
\[
\sum_{\substack{T\in \RPP (\nu) \\
\rdiag(T)=\rho}}  q^{|T|}
=q^{-|\rho|}\sum_{\substack{T\in \RPP ( (\delta_{n+1}+\lambda)^* ) \\
\rdiag(T)=\rho}}  q^{|T|}
\sum_{\substack{T\in \RPP ( (\delta_{n+1}+\mu)^* ) \\
\rdiag(T)=\rho}}  q^{|T|},
\]
we get the desired formula by Theorem~\ref{thm:GT}.
\end{proof}

The following $q$-integral evaluation is the special case $k=1$ of Warnaar's 
integral
\cite[Theorem~1.4]{Warnaar2005}, which has Macdonald polynomials 
instead of Schur functions.

\begin{thm} \cite[Special case of Theorem~1.4]{Warnaar2005}
\label{thm:warnaar}
  Let $\lambda=(\lambda_1,\dots,\lambda_n)$ and
  $\mu=(\mu_1,\dots,\mu_n)$ be partitions,
  $\Re(\alpha)>-\lambda_n-\mu_n$ and $0<q<1$.  Then
  \begin{multline*}
\int_{0\le x_1\le \cdots\le x_n\le 1} s_\lambda(x)  s_\mu(x) 
\prod_{i=1}^n x_i^{\alpha-1} \DeltaBar(x)^2 d_qx
=(1-q)^n q^{\alpha\binom n2 + 2\binom n3} \\
\times s_\lambda(1,q,\dots,q^{n-1}) s_\mu(1,q,\dots,q^{n-1})
\prod_{i=1}^{n-1} (q;q)_i^2
\prod_{i,j=1}^n \frac{1}{1-q^{\alpha+2n-i-j+\lambda_i+\mu_j}}.
  \end{multline*}
\end{thm}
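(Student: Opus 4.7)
The plan is to reduce Warnaar's integral to the trace-generating function for reverse plane partitions via the dictionary already established in this section. Because the integrand contains the factor $\DeltaBar(x)^2$, it vanishes whenever $x_i=x_j$, so Lemma~\ref{lem:par} applies and converts the integral into a sum over $\nu\in\Par_n$ evaluated at $x=q^{\nu+\delta_n}$. Collecting the $q^{|\nu+\delta_n|}$ from Lemma~\ref{lem:par} together with the factor $\prod_i x_i^{\alpha-1}$, I obtain
\[
\mathrm{LHS} = (1-q)^n \sum_{\nu\in\Par_n} q^{\alpha |\nu+\delta_n|}\, s_\lambda(q^{\nu+\delta_n})\, s_\mu(q^{\nu+\delta_n})\, \DeltaBar(q^{\nu+\delta_n})^2.
\]

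Next I would apply Corollary~\ref{cor:rpp2}. Let $\eta$ be the partition obtained from $(n^n)$ by attaching $\lambda$ to the right and $\mu'$ below. That corollary recognizes the summand, up to an explicit $\nu$-independent prefactor
\[
C(\lambda,\mu,n)=q^{\binom n2 - b(\delta_{n+1}+\lambda) - b(\delta_{n+1}+\mu)}\prod_{j=1}^{n}(q;q)_{\lambda_j+n-j}(q;q)_{\mu_j+n-j},
\]
as $\sum_{T\in\RPP(\eta),\,\rdiag(T)=\nu} q^{|T|}$. Using $|\nu+\delta_n|=\tr(T)+\binom n2$ whenever $\rdiag(T)=\nu$, summing over $\nu$ collapses the constraint and gives
\[
\mathrm{LHS}=(1-q)^n\, q^{(\alpha-1)\binom n2}\, C(\lambda,\mu,n)^{-1}\sum_{T\in\RPP(\eta)} q^{|T|+(\alpha-1)\tr(T)}.
\]

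The last step is to evaluate the trace-weighted generating function on the right via the Stanley--Gansner hook product formula for reverse plane partitions, specialized at $t=q^{\alpha-1}$ on the main diagonal. Because $\eta$ is built by gluing $\lambda$, $\mu'$, and the $n\times n$ square, each cell $(i,j)$ of the central square has hook $h_\eta(i,j)=\lambda_i+\mu_j+2n-i-j+1$ (the $+1$ being absorbed when one passes from $|T|$ to $|T|+(\alpha-1)\tr(T)$ on the diagonal), and the corresponding factor becomes exactly $1/(1-q^{\alpha+2n-i-j+\lambda_i+\mu_j})$. The remaining cells in the attached $\lambda$- and $\mu'$-pieces carry only the $q^{|T|}$ weight and produce the factors $s_\lambda(1,q,\dots,q^{n-1})\,s_\mu(1,q,\dots,q^{n-1})\prod_{i=1}^{n-1}(q;q)_i^2$ via the principal-specialization hook-content formula. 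Combining these with $C(\lambda,\mu,n)^{-1}$ and the prefactor $(1-q)^n q^{(\alpha-1)\binom n2}$ should, after bookkeeping of $q$-powers using $b(\delta_{n+1}+\lambda)=\binom{n+1}{3}+b(\lambda)$ and similarly for $\mu$, reproduce the claimed right-hand side with its $q^{\alpha\binom n2+2\binom n3}$.

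The main obstacle is the reconciliation of hooks in Step 3: the shape $\eta$ is not a staircase, and one must verify that the Stanley--Gansner product reorganizes so that only the $n^2$ cells of the central square contribute nontrivial trace factors, and that their hook lengths are precisely $\lambda_i+\mu_j+2n-i-j+1$. The auxiliary bookkeeping of $q$-power corrections coming from $b(\delta_{n+1}+\lambda)$, $b(\delta_{n+1}+\mu)$ and principal specializations is routine but error-prone, and matching it against $q^{\alpha\binom n2+2\binom n3}$ is where the proof must be checked most carefully.
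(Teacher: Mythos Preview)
Your approach is correct and genuinely different from the paper's. The paper does not prove this integral from scratch: it simply quotes the $k=1$ case of Warnaar's Macdonald-polynomial integral over the cube $[0,1]^n$ and then invokes Kadell's symmetrization argument (the same one used to pass between \eqref{eq:q-Selberg} and \eqref{eq:Askey}) to convert the cube integral into the simplex integral. That is the entire proof in the paper.

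What you do instead is run the paper's own Theorem~\ref{thm:warnaar_rpp}/Corollary~\ref{cor:rpp_nu} pipeline \emph{backwards}: you use Lemma~\ref{lem:par} and Corollary~\ref{cor:rpp2} to identify the integral with the trace-weighted generating function $\sum_{T\in\RPP(\eta)}q^{|T|+(\alpha-1)\tr(T)}$, and then evaluate the latter by Gansner's hook product. Since Gansner's theorem is an independent result, this gives a self-contained combinatorial proof of the $k=1$ Warnaar integral that avoids Macdonald polynomials entirely. Within the paper's logical order you must cite Gansner directly rather than Corollary~\ref{cor:rpp_nu}, since the latter is stated there as a consequence of Theorem~\ref{thm:warnaar}; otherwise the argument would be circular.

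Two small points. First, your worry that ``only the $n^2$ cells of the central square contribute nontrivial trace factors'' is already handled: the Durfee square of $\eta$ is exactly $(n^n)$ (row $n{+}1$ has length $\ell(\mu)\le n$), so Gansner's $\chi(u)$ is $1$ precisely on those cells. Your parenthetical ``on the diagonal'' is a slip --- the trace weight $q^{\alpha-1}$ enters for every Durfee cell, not just the diagonal ones --- but the computation you intend, $(\alpha-1)+h_\eta(i,j)=\alpha+2n-i-j+\lambda_i+\mu_j$, is correct. Second, your constant $C(\lambda,\mu,n)$ and the later $C^{-1}$ are not quite consistent with Corollary~\ref{cor:rpp2} as written (the $(q;q)$-products should multiply the RPP sum, not divide it); fixing this and using $b(\delta_{n+1}+\lambda)=\binom{n}{2}+\binom{n}{3}+b(\lambda)$ makes the $q$-powers collapse to $q^{\alpha\binom{n}{2}+2\binom{n}{3}}$ and the $(q;q)_{\lambda_j+n-j}$, $(q;q)_{\mu_j+n-j}$ cancel against the hook-content denominators, leaving $\prod_{i=1}^{n-1}(q;q)_i^2$ as claimed.
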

\begin{proof}
If $k=1$ in \cite[Theorem~1.4]{Warnaar2005}, we have 
  \begin{multline*}
\int_{[0,1]^n} s_\lambda(x)  s_\mu(x) 
\prod_{i=1}^n x_i^{\alpha-1}
\prod_{1\le i<j\le n}x_{j}^{2}\left(\frac{x_i}{x_j}\right)_{2}d_qx\\
=q^{\alpha \binom n2 + 2\binom n3} (1-q)^{n^2}
s_\lambda(1,q,\dots,q^{n-1}) s_\mu(1,q,\dots,q^{n-1}) \\
\times \prod_{i=1}^n \Gamma_q(i)\Gamma_q(i+1)
\prod_{i,j=1}^n \frac{1}{1-q^{\alpha+2n-i-j+\lambda_i+\mu_j}}.
  \end{multline*}
  By the same argument proving the equivalence of \eqref{eq:q-Selberg}
  and \eqref{eq:Askey}, one can show that these two integral
  evaluations are equivalent.
\end{proof}

We now show that Warnaar's $q$-integral Theorem~\ref{thm:warnaar} 
can be interpreted as a trace-generating function
of a reverse plane partition of given shape. First we show that the trace generating 
function is a $q$-integral. 

\begin{thm}
\label{thm:warnaar_rpp}  
  Let $\lambda=(\lambda_1,\dots,\lambda_n)$, $\mu=(\mu_1,\dots,\mu_n)$
  and $\nu$ the partition obtained from $(n^n)$ by attaching $\lambda$
  to the right and $\mu'$ at the bottom.  Then
\begin{multline*}
\sum_{T\in \RPP (\nu)}  q^{|T|} (q^a)^{\tr(T)}
= \frac{q^{(1-a)\binom n2 -b(\delta_{n+1}+\lambda) -b(\delta_{n+1}+\mu)}}
{\prod_{j=1}^{n}(q;q)_{\lambda_j+n-j}(q;q)_{\mu_j+n-j}}  \\
\times \frac{1}{(1-q)^n} \int_{0\le x_1\le \cdots\le x_n \le 1} 
s_\lambda(\vec x)s_\mu(\vec x) \prod_{i=1}^nx_i^{a} 
\DeltaBar(\vec x)^2 d_q\vec x.
\end{multline*}
\end{thm}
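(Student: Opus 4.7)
The plan is to derive this trace generating function identity by summing the fixed-diagonal generating function of Corollary~\ref{cor:rpp2} over all partitions $\rho\in\Par_n$, and then converting the resulting sum into a $q$-integral via Lemma~\ref{lem:par}. The key observation is that when $\rdiag(T)=\rho$, the trace of $T$ is simply $|\rho|$, so the trace weight $(q^a)^{\tr(T)}=q^{a|\rho|}$ is a scalar factor which can be absorbed into the sum over $\rho$.

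First, I would write
\[
\sum_{T\in \RPP(\nu)} q^{|T|}(q^a)^{\tr(T)}
= \sum_{\rho\in\Par_n} q^{a|\rho|} \sum_{\substack{T\in\RPP(\nu)\\ \rdiag(T)=\rho}} q^{|T|},
\]
and substitute Corollary~\ref{cor:rpp2} for the inner sum. Pulling out the constants $q^{\binom n2 - b(\delta_{n+1}+\lambda)-b(\delta_{n+1}+\mu)}$ and the denominator $\prod_j (q;q)_{\lambda_j+n-j}(q;q)_{\mu_j+n-j}$, the residual sum to evaluate becomes
\[
\sum_{\rho\in\Par_n} q^{a|\rho|}\, q^{|\rho+\delta_n|}\, s_\lambda(q^{\rho+\delta_n}) s_\mu(q^{\rho+\delta_n}) \DeltaBar(q^{\rho+\delta_n})^2.
\]

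Next I would prepare this sum for Lemma~\ref{lem:par}. Using $|\rho+\delta_n|=|\rho|+\binom n2$, write $q^{a|\rho|}=q^{-a\binom n2}\prod_{i=1}^n (q^{\rho_i+n-i})^a$, so that the factor $q^{a|\rho|}$ becomes $q^{-a\binom n2}\prod_i x_i^a$ evaluated at $x=q^{\rho+\delta_n}$. Setting $f(x_1,\dots,x_n)=s_\lambda(x)s_\mu(x)\prod_i x_i^a\,\DeltaBar(x)^2$, the sum takes the form $q^{-a\binom n2}\sum_{\rho\in\Par_n} q^{|\rho+\delta_n|} f(q^{\rho+\delta_n})$. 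The crucial point is that $f$ vanishes whenever $x_i=x_j$ for $i\ne j$, because of the $\DeltaBar(x)^2$ factor, so Lemma~\ref{lem:par} applies and converts this into
\[
\frac{q^{-a\binom n2}}{(1-q)^n}\int_{0\le x_1\le\cdots\le x_n\le 1} s_\lambda(x)s_\mu(x)\prod_{i=1}^n x_i^a\,\DeltaBar(x)^2\, d_qx.
\]

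Finally, I would reassemble everything: combining the extracted constants with the factor $q^{-a\binom n2}$ produces the exponent $\binom n2-a\binom n2-b(\delta_{n+1}+\lambda)-b(\delta_{n+1}+\mu)=(1-a)\binom n2-b(\delta_{n+1}+\lambda)-b(\delta_{n+1}+\mu)$, matching the statement exactly. The argument is essentially a rearrangement together with one application of Lemma~\ref{lem:par}; there is no serious obstacle, only careful bookkeeping of the $q$-exponents and the verification that the vanishing hypothesis of Lemma~\ref{lem:par} is satisfied (which is automatic from the squared Vandermonde).
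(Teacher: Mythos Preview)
Your proposal is correct and follows essentially the same route as the paper: fix the diagonal $\rho$, apply Corollary~\ref{cor:rpp2}, rewrite $q^{a|\rho|}=q^{-a\binom n2}\prod_i(q^{\rho_i+n-i})^a$, and then invoke Lemma~\ref{lem:par} (using the vanishing from $\DeltaBar(x)^2$) to turn the sum over $\rho$ into the $q$-integral. The bookkeeping of exponents you outline matches the paper's computation exactly.
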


\begin{proof}
  Let $\rho=(\rho_1,\dots,\rho_n)$ be a fixed partition. For
  $T\in \RPP (\nu)$ with $\rdiag(T)=\rho$, we have
\begin{equation}
  \label{eq:atr}
q^{a\cdot \tr(T)} = q^{a|\rho|} = 
q^{-a\binom n2} \left(q^{|\rho+\delta_n|}\right)^a.
\end{equation}
Thus, by Corollary~\ref{cor:rpp2}, 
\begin{multline}
  \label{eq:35}
\sum_{\substack{T\in \RPP (\nu) \\
\rdiag(T)=\rho}}  q^{|T|} (q^a)^{\tr(T)} 
=  \frac{q^{(1-a)\binom n2 -b(\delta_{n+1}+\lambda) -b(\delta_{n+1}+\mu)}
\left(q^{|\rho+\delta_n|}\right)^{a+1}}
{\prod_{j=1}^{n}(q;q)_{\lambda_j+n-j}(q;q)_{\mu_j+n-j}}\\
\times s_\lambda(q^{\rho+\delta_n})
s_\mu(q^{\rho+\delta_n})
\DeltaBar(q^{\rho+\delta_n})^2. 
\end{multline}

By Lemma~\ref{lem:par}, if we take the sum of both sides of
\eqref{eq:35} over all $\rho\in\Par_n$, we get the theorem.
\end{proof}

As a corollary of Theorems~\ref{thm:warnaar} and \ref{thm:warnaar_rpp},
one can prove the 
trace-generating function for reverse plane partitions, which is a
special case of \cite[Theorem~5.1]{Gansner1981}. One uses the principally 
specialized Schur functions as products, 
\cite[Lemma~7.21.1]{EC2} and \cite[Lemma~7.21.2]{EC2}.
We do not give the details.

\begin{cor}\label{cor:rpp_nu}
Let $\nu$ be a partition. Then 
\[
\sum_{T\in \RPP (\nu)}   (q^a)^{\tr(T)} q^{|T|} = 
\prod_{u\in\nu} \frac{1}{1-(q^a)^{\chi(u)} q^{h(u)}},
\]
where $\chi(u)=1$ if $u$ is in the Durfee square of $\nu$ and
$\chi(u)=0$ otherwise. 
\end{cor}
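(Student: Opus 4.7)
The strategy is to combine Theorem~\ref{thm:warnaar} and Theorem~\ref{thm:warnaar_rpp} with the principally specialized Schur function formulas \cite[Lemma~7.21.1, Lemma~7.21.2]{EC2}, and then identify the resulting product with the hook product on the right-hand side. For any partition $\nu$, let $n$ be the side length of its Durfee square; then $\nu$ decomposes uniquely as $(n^n)$ with some $\lambda$ (with $\ell(\lambda)\le n$) attached to the right and $\mu'$ (with $\ell(\mu)\le n$) attached at the bottom, which is exactly the setup of Theorem~\ref{thm:warnaar_rpp}.

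First I would substitute $\alpha=a+1$ into Theorem~\ref{thm:warnaar} to evaluate the $q$-integral appearing in Theorem~\ref{thm:warnaar_rpp}. This yields a closed form for $\sum_{T\in\RPP(\nu)}q^{|T|}(q^a)^{\tr(T)}$ as a product of $s_\lambda(1,q,\dots,q^{n-1})s_\mu(1,q,\dots,q^{n-1})$, the quotient $\prod_{i=1}^{n-1}(q;q)_i^2\bigm/\bigl(\prod_{j=1}^n(q;q)_{\lambda_j+n-j}(q;q)_{\mu_j+n-j}\bigr)$, the factor $\prod_{i,j=1}^n\bigl(1-q^{a+1+2n-i-j+\lambda_i+\mu_j}\bigr)^{-1}$, and a $q$-prefactor. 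Using $b(\delta_{n+1}+\lambda)=\binom{n+1}{3}+b(\lambda)$ and $\binom{n+1}{3}=\binom{n}{3}+\binom{n}{2}$, the prefactor simplifies to $q^{-b(\lambda)-b(\mu)}$. Invoking the principal specialization
\[
s_\lambda(1,q,\dots,q^{n-1}) = q^{b(\lambda)}\prod_{(i,j)\in\lambda}\frac{1-q^{n+j-i}}{1-q^{h_\lambda(i,j)}}
\]
together with the elementary identity $\prod_{(i,j)\in\lambda}(1-q^{n+j-i})=\prod_{i=1}^n(q;q)_{n+\lambda_i-i}\bigm/\prod_{k=1}^{n-1}(q;q)_k$, one rewrites the Schur-plus-$q$-factorial cluster as $q^{b(\lambda)}/\prod_{u\in\lambda}(1-q^{h_\lambda(u)})$, and similarly for $\mu$. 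The $q^{b(\lambda)+b(\mu)}$ factors then cancel the prefactor, yielding
\[
\sum_{T\in\RPP(\nu)}q^{|T|}(q^a)^{\tr(T)} = \prod_{u\in\lambda}\frac{1}{1-q^{h_\lambda(u)}}\prod_{u\in\mu}\frac{1}{1-q^{h_\mu(u)}}\prod_{i,j=1}^n\frac{1}{1-q^{a+1+2n-i-j+\lambda_i+\mu_j}}.
\]

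It remains to match this with $\prod_{u\in\nu}(1-(q^a)^{\chi(u)}q^{h_\nu(u)})^{-1}$. A direct computation gives $h_\nu(i,j)=2n+\lambda_i+\mu_j-i-j+1$ for cells $(i,j)$ in the Durfee square, so the third product covers exactly the Durfee-square cells (where $\chi(u)=1$) with exponent $a+h_\nu(u)$. For cells in the $\lambda$-extension or the $\mu'$-extension of $\nu$ the hook lengths in $\nu$ coincide with those in $\lambda$ or $\mu'$ respectively, and since $\mu$ and $\mu'$ share the same multiset of hook lengths, the first two products cover precisely the remaining cells (where $\chi(u)=0$). I expect the main obstacle to be the careful bookkeeping of the $q$-power exponents and verifying the hook-length identifications at the boundary between the Durfee square, the $\lambda$-extension, and the $\mu'$-extension; once those are dispatched, the identity follows by direct term-by-term comparison.
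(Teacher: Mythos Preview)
Your proposal is correct and follows precisely the approach the paper itself outlines (but does not carry out in detail): combine Theorems~\ref{thm:warnaar} and~\ref{thm:warnaar_rpp}, simplify via the principally specialized Schur functions from \cite[Lemmas~7.21.1 and~7.21.2]{EC2}, and then identify the resulting three products with the hook product over $\nu$. Your bookkeeping of the $q$-prefactor (using $b(\delta_{n+1}+\lambda)=\binom{n+1}{3}+b(\lambda)$), of the content product $\prod_{(i,j)\in\lambda}(1-q^{n+j-i})$, and of the hook lengths in the Durfee square and in the $\lambda$- and $\mu'$-extensions is all correct.
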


\subsection{The trace-generating function for shifted reverse plane
  partitions}

In this subsection we find a ``shifted''-counterpart to Warnaar's
$q$-integral, which gives a trace generating function for shifted
reverse plane partitions.

We need a special case of Gansner's result on the trace-generating
function for shifted reverse plane partitions. We note that 
Gansner \cite{Gansner1981} considered a more general weight which
involves the entries in the $k$-diagonal for each $k$. 

\begin{thm}\cite[A special case of Theorem~7.1]{Gansner1981}
\label{thm:gansner}
For $\lambda\in\Par_n$, we have
\[
\sum_{T\in\RPP((\lambda+\delta_{n+1})^*)}x^{\tr(T)} q^{|T|}
=\prod_{u\in (\lambda+\delta_{n+1})^*}
\frac{1}{1-x^{\chi(u)}q^{h(u)}},
\]
where $\chi(u)=1$ if $u$ is in column $j$ for $1\le j\le n$, and
$\chi(u)=0$ otherwise, and $h(u)$ is the length of the shifted hook at
$u$, see \cite{Gansner1981} for the definition.  Equivalently, this
can be restated as
\begin{multline*}
\sum_{T\in\RPP((\lambda+\delta_{n+1})^*)}x^{\tr(T)} q^{|T|}
=q^{-b(\lambda)} s_\lambda(1,q,\dots,q^{n-1}) \prod_{j=1}^n
\frac{(q;q)_{j-1}}{(q;q)_{\lambda_j+n-j}}\\
\times \prod_{1\le i\le j\le n} 
\frac{1}{1-xq^{1+2n-i-j+\lambda_i+\lambda_{j+1}}}.
\end{multline*}
\end{thm}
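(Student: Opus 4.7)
The plan is to follow the two-step template that produced Corollary~\ref{cor:rpp_nu} from Warnaar's integral, now adapted to the shifted setting. Since $\tr(T)=|\rdiag(T)|$, I would decompose
\[
\sum_{T\in\RPP((\lambda+\delta_{n+1})^*)}x^{\tr(T)}q^{|T|}
=\sum_{\mu\in\Par_n}x^{|\mu|}\sum_{T:\,\rdiag(T)=\mu}q^{|T|},
\]
and apply Theorem~\ref{thm:rpp} to the inner sum. Writing $x=q^{a}$ and rearranging exponents so that $x^{|\mu|}q^{|\mu+\delta_n|}=q^{-a\binom n2}\cdot q^{(a+1)(|\mu|+\binom n2)}$, the outer sum fits the hypothesis of Lemma~\ref{lem:par} applied to $f(\vec x)=(x_1\cdots x_n)^{a}s_\lambda(\vec x)\DeltaBar(\vec x)$, whose Vandermonde factor ensures vanishing on every diagonal $x_i=x_j$. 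Carrying the constants through, the trace-generating function becomes a constant multiple of
\[
\int_{0\le x_1\le\cdots\le x_n\le 1}(x_1\cdots x_n)^{a}\,s_\lambda(\vec x)\,\DeltaBar(\vec x)\,d_q\vec x.
\]

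The second and main step is to evaluate this $q$-integral in closed product form; this is the shifted Warnaar-type $q$-integral that the authors call Theorem~\ref{thm:gansner_int}. My approach is to write $s_\lambda(\vec x)\DeltaBar(\vec x)=(-1)^{\binom n2}\det(x_j^{\lambda_i+n-i})$ and apply an Andrews--Gustafson-type iterative evaluation over the ordered simplex: expand the determinant, successively $q$-integrate in $x_1,x_2,\dots$, and collect the resulting $q$-Pochhammer denominators. After antisymmetrization the answer should consolidate into a prefactor proportional to the principal specialization $s_\lambda(1,q,\dots,q^{n-1})$ times the product $\prod_{1\le i\le j\le n}(1-xq^{1+2n-i-j+\lambda_i+\lambda_{j+1}})^{-1}$, with $x=q^{a+1}$ after restoring the original variable.

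To pass from this integral evaluation to Gansner's hook-product form $\prod_u(1-x^{\chi(u)}q^{h(u)})^{-1}$, I would split the cells of $(\lambda+\delta_{n+1})^*$ into the staircase part (columns $1,\dots,n$, where $\chi(u)=1$) and the $\lambda$ part (columns past $n$, where $\chi(u)=0$). The staircase-part contribution matches $\prod_{1\le i\le j\le n}(1-xq^{1+2n-i-j+\lambda_i+\lambda_{j+1}})^{-1}$ by a direct reading of shifted hook lengths, while the $\lambda$-part contribution is absorbed into the principal-specialization prefactor using standard hook-length identities for Schur functions; this delivers the second form in Theorem~\ref{thm:gansner} and hence the first. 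The main obstacle is the integral evaluation itself: controlling the signs in the Andrews--Gustafson cancellation and then matching shifted hook lengths to the exponents in the resulting Pochhammer denominators. The single-Vandermonde feature $\DeltaBar$ (rather than $\DeltaBar^2$ as in Warnaar's integral) is exactly what makes the shifted case genuinely different, forcing the determinantal expansion by hand instead of reducing to the doubled-Vandermonde case.
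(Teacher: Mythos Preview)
The paper does not prove this theorem. It is quoted from Gansner's 1981 paper, and the authors say explicitly that they do not even prove the equivalence of the two displayed forms (``it follows from routine facts on Schur functions''). More importantly, the logical flow in the paper is the \emph{reverse} of what you propose: Theorem~\ref{thm:gansner} is taken as an external input, and the single-Vandermonde $q$-integral evaluation (Theorem~\ref{thm:gansner_int}) is deduced \emph{from} it by combining Theorem~\ref{thm:rpp}, Lemma~\ref{lem:par}, and then comparing with Gansner's product. You are trying to run this backwards: evaluate the integral directly, then read off Gansner's theorem.

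That reversal would be a genuinely new proof, but it stands or falls on your ``second and main step,'' and there the proposal has a real gap. Expanding $s_\lambda(\vec x)\DeltaBar(\vec x)$ as a determinant and integrating monomials over the simplex gives
\[
(-1)^{\binom n2}(1-q)^n\sum_{\sigma\in S_n}\sgn(\sigma)\prod_{k=1}^{n}\frac{1}{1-q^{\,k\alpha+\sum_{j\le k}(\lambda_{\sigma(j)}+n-\sigma(j))}},
\]
a signed sum whose denominators depend on \emph{partial sums} along $\sigma$. This is not a Cauchy determinant, and no standard antisymmetrization collapses it into the product $\prod_{i\le j}(1-xq^{1+2n-i-j+\lambda_i+\lambda_{j+1}})^{-1}$. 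The phrase ``Andrews--Gustafson-type iterative evaluation'' names a hope, not a mechanism; you yourself flag this as ``the main obstacle.'' Unless you can supply that identity independently, your argument either invokes Theorem~\ref{thm:gansner_int} (whose proof in the paper already uses Gansner) and is circular, or it is incomplete at exactly the point the paper chose to import Gansner's result rather than prove it.
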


We do not prove the equivalence of the second statement. It follows from 
routine facts on Schur functions. 

Theorem~\ref{thm:gansner} gives the generating function for
shifted reverse plane partitions, but we want
to find the corresponding $q$-integral.  Here is the ``shifted''
version of Warnaar's $q$-integral which corresponds to Gansner's
theorem.

\begin{thm}
\label{thm:gansner_int}
If $\lambda\in \Par_n$,
  \begin{multline*}
\int_{0\le x_1\le \cdots\le x_n\le 1} s_\lambda(x)  
\prod_{i=1}^n x_i^{\alpha-1} \DeltaBar(x) d_qx
=(1-q)^n q^{\alpha\binom n2 + \binom n3} s_\lambda(1,q,\dots,q^{n-1})\\
\times \prod_{i=1}^{n-1} (q;q)_i \prod_{1\le i\le j\le n} 
\frac{1}{1-q^{\alpha+2n-i-j+\lambda_i+\lambda_{j+1}}}.
  \end{multline*}
\end{thm}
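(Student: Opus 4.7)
The plan is to mirror the proof of Theorem~\ref{thm:warnaar_rpp}, but with only a single Schur function and Vandermonde: convert the $q$-integral to a sum over partitions via Lemma~\ref{lem:par}, identify that sum as the trace-generating function for \emph{shifted} reverse plane partitions by invoking Theorem~\ref{thm:rpp}, and then apply Gansner's evaluation Theorem~\ref{thm:gansner}.

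First, since $\DeltaBar(x)$ forces the integrand to vanish whenever $x_i=x_j$, Lemma~\ref{lem:par} applies. Substituting $x=q^{\mu+\delta_n}$ and noting $\prod_{i=1}^n x_i^{\alpha-1}=q^{(\alpha-1)|\mu+\delta_n|}$, the integral equals
\[
(1-q)^n\sum_{\mu\in\Par_n}q^{\alpha|\mu+\delta_n|}\,s_\lambda(q^{\mu+\delta_n})\,\DeltaBar(q^{\mu+\delta_n})
=(1-q)^n q^{\alpha\binom{n}{2}}\sum_{\mu\in\Par_n}q^{\alpha|\mu|}s_\lambda(q^{\mu+\delta_n})\DeltaBar(q^{\mu+\delta_n}).
\]

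Next, Theorem~\ref{thm:rpp} rewrites
\[
q^{|\mu+\delta_n|}s_\lambda(q^{\mu+\delta_n})\DeltaBar(q^{\mu+\delta_n})
=q^{b(\delta_{n+1}+\lambda)}\prod_{j=1}^n(q;q)_{\lambda_j+n-j}\sum_{\substack{T\in\RPP((\delta_{n+1}+\lambda)^*)\\ \rdiag(T)=\mu}}q^{|T|}.
\]
Since every such $T$ has $\tr(T)=|\mu|$, the extra factor $q^{(\alpha-1)|\mu|}$ then groups perfectly with the inner sum, so summing over $\mu\in\Par_n$ produces the full trace generating function
\[
\sum_{T\in\RPP((\delta_{n+1}+\lambda)^*)}(q^{\alpha-1})^{\tr(T)}q^{|T|},
\]
which is evaluated by Theorem~\ref{thm:gansner} with $x=q^{\alpha-1}$.

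Combining these, the denominators $\prod_j(q;q)_{\lambda_j+n-j}$ from Theorem~\ref{thm:rpp} and Theorem~\ref{thm:gansner} cancel, leaving $\prod_{j=1}^n(q;q)_{j-1}=\prod_{i=1}^{n-1}(q;q)_i$. The product $\prod_{1\le i\le j\le n}(1-q^{\alpha+2n-i-j+\lambda_i+\lambda_{j+1}})^{-1}$ appears directly from Gansner. Collecting the powers of $q$ boils down to the identity
\[
\alpha\binom{n}{2}-\binom{n}{2}+b(\delta_{n+1}+\lambda)-b(\lambda)=\alpha\binom{n}{2}+\binom{n+1}{3}-\binom{n}{2}=\alpha\binom{n}{2}+\binom{n}{3},
\]
which matches the exponent $\alpha\binom{n}{2}+\binom{n}{3}$ on the right side.

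The main obstacle is purely bookkeeping: tracking the exponents of $q$ through the substitution $x=q^{\mu+\delta_n}$, and verifying that the $(q;q)_k$ factors from Theorems~\ref{thm:rpp} and~\ref{thm:gansner} assemble correctly. No genuinely new analytic input is needed beyond the three results already proved in the paper plus the cited trace-generating function of Gansner.
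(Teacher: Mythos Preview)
Your proposal is correct and is essentially the paper's own argument: both proofs combine Lemma~\ref{lem:par}, Theorem~\ref{thm:rpp} (equivalently Theorem~\ref{thm:GT}), and Gansner's Theorem~\ref{thm:gansner}, differing only in that the paper starts from the RPP side and solves for the integral while you start from the integral and reach the RPP generating function. Your bookkeeping of the $q$-exponent via $b(\delta_{n+1}+\lambda)-b(\lambda)=b(\delta_{n+1})=\binom{n+1}{3}$ is correct and matches the paper's use of $b(\delta_{n+1})=\binom{n}{2}+\binom{n}{3}$.
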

\begin{proof}
  Let $a=\alpha-1$.  By Theorem~\ref{thm:GT}, we have
\[
\sum_{\substack{T\in \RPP ( (\delta_{n+1}+\lambda)^* ) \\
\rdiag(T)=\mu}}  (q^a)^{\tr(T)}q^{|T|}
= q^{-a\binom n2} q^{a|\mu+\delta_n|} \frac{q^{-b(\delta_{n+1}+\lambda)}}
{\prod_{j=1}^{n}(q;q)_{\lambda_j+n-j}}
q^{|\mu+\delta_n|} s_\lambda(q^{\mu+\delta_n})\DeltaBar(q^{\mu+\delta_n}).
\]
By summing both sides over all $\mu\in\Par_n$ and using 
Lemma~\ref{lem:par}, we obtain
\begin{multline*}
\sum_{T\in \RPP ( (\delta_{n+1}+\lambda)^* )}  (q^a)^{\tr(T)}q^{|T|}
= \frac{q^{-a\binom n2 -b(\delta_{n+1}+\lambda)}}
{\prod_{j=1}^{n}(q;q)_{\lambda_j+n-j}} 
\sum_{\mu\in\Par_n} q^{|\mu+\delta_n|} q^{a|\mu+\delta_n|}  
s_\lambda(q^{\mu+\delta_n})\DeltaBar(q^{\mu+\delta_n})\\
= \frac{q^{-(a+1)\binom n2 -\binom n3- b(\lambda)}}
{\prod_{j=1}^{n}(q;q)_{\lambda_j+n-j}} 
\frac{1}{(1-q)^n} \int_{0\le x_1\le \cdots\le x_n\le 1} s_\lambda(x)  
\prod_{i=1}^n x_i^{a} \DeltaBar(x) d_qx.
\end{multline*}
We finish the proof by comparing this with Theorem~\ref{thm:gansner}.  
\end{proof}

\subsection{Reverse plane partitions and the $q$-Selberg integral}

In this subsection we show that the Askey-Kadell-Selberg integral is
equivalent to a generating function for reverse plane partitions of a
square shape with certain weight. The weight has 3 parameters and
generalizes the sum of the entries of a reverse plane partition.

\begin{figure}
  \centering
\includegraphics{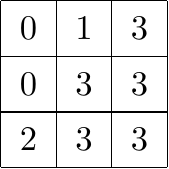} 
  \caption{A reverse plane partition of square shape $(3^3)$.}
\label{fig:rpp}
\end{figure}

\begin{defn}
  For a reverse plane partition $T\in\RPP(n^n)$ with
  $\rdiag(T)=(\mu_1,\dots,\mu_n)$, let $v_i=\mu_i+n-i$ for $i=1,2,\dots,n$.
  For integers $a,b\ge0$ and $m\ge1$, we define
\[
\wt_{a,b,m}(T) = q^{|T|+a\cdot\tr(T)}
(q^{v_1+1} ,\dots, q^{v_n+1};q)_b
\prod_{1\le i<j\le n} 
\frac{(q^{v_j})^{2m-1} (q^{1-m+v_i-v_j};q)_{2m-1}}{q^{v_j}-q^{v_i}}.
\]
\end{defn}

\begin{exam}
  Let $T$ be the reverse plane partition in Figure~\ref{fig:rpp}. Then
  $|T|=18$, $\tr(T)=6$, $\rdiag(T)=(3,3,0)$ and
  $(v_1,v_2,v_3)=(3,3,0)+(2,1,0)=(5,4,0)$. Thus
  \begin{multline*}
\wt_{a,b,m}(T) = q^{18+6a} (q^6,q^5,q^1;q)_b
\cdot \frac{(q^4)^{2m-1}(q^{1-m+5-4};q)_{2m-1}}{q^4-q^5}    
\cdot \frac{(q^0)^{2m-1}(q^{1-m+5-0};q)_{2m-1}}{q^0-q^5}    \\
\times
 \frac{(q^0)^{2m-1}(q^{1-m+4-0};q)_{2m-1}}{q^0-q^4}    \\
=q^{18+6a} (q^6,q^5,q^1;q)_b
\cdot \frac{q^{8m-8}(q^{2-m},q^{6-m},q^{5-m};q)_{2m-1}}
{(1-q)(1-q^5)(1-q^4)} .
  \end{multline*}
\end{exam}

Note that
\begin{align*}
\wt_{a,b,1}(T) &= q^{|T|+a\cdot\tr(T)} (q^{v_1+1} ,\dots, q^{v_n+1};q)_b,\\
\wt_{a,0,1}(T) &= q^{|T|+a\cdot\tr(T)}, \\
\wt_{0,0,1}(T) &= q^{|T|}.
\end{align*}

The following theorem shows that the Askey-Kadell-Selberg integral
is a generating function for reverse plane partitions of a square
shape, up to simple constants.

\begin{thm}
\label{thm:AKS_RPP}
The weighted generating function for reverse plane partitions of a square shape
is given by the Askey-Kadell-Selberg integral 
\[
\sum_{T\in \RPP(n^n)} \wt_{a,b,m}(T) =
\frac{q^{(-1-a)\binom n2 - 2\binom n3}}{(1-q)^n\prod_{j=1}^{n-1}(q;q)_{j}^2}
\cdot S_q(n,a+1,b+1,m).
\]
\end{thm}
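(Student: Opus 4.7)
The plan is to classify reverse plane partitions $T \in \RPP(n^n)$ by their reverse diagonal $\mu = \rdiag(T) \in \Par_n$, and then apply Corollary~\ref{cor:rpp2} (for the $|T|$ sum at fixed $\mu$) together with Lemma~\ref{lem:par} (to convert the remaining sum over $\mu$ into the $q$-Selberg integral). Let $v_i = \mu_i + n - i$, so $v = \mu + \delta_n$ and $x_i := q^{v_i}$. The key observation is that every factor in $\wt_{a,b,m}(T)$ except $q^{|T|}$ depends only on $\mu$ (through $v$), not on $T$ itself.

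First I would apply Corollary~\ref{cor:rpp2} with $\lambda = \mu_{\text{Cor}} = \emptyset$ and $\rho = \mu$, giving
\[
\sum_{\substack{T\in \RPP(n^n)\\ \rdiag(T) = \mu}} q^{|T|}
= \frac{q^{\binom{n}{2} - 2b(\delta_{n+1})}\, q^{|v|}}{\prod_{j=0}^{n-1}(q;q)_j^2}\, \DeltaBar(q^v)^2,
\]
since $s_\emptyset \equiv 1$. Pulling out the $\mu$-dependent weight factors and using $\tr(T) = |\mu| = |v| - \binom{n}{2}$, the total sum becomes
\[
\frac{q^{\binom{n}{2} - 2b(\delta_{n+1}) - a\binom{n}{2}}}{\prod_{j=0}^{n-1}(q;q)_j^2}
\sum_{\mu \in \Par_n} q^{(a+1)|v|} \prod_{i=1}^n (q x_i;q)_b
\prod_{1 \le i < j \le n} \frac{x_j^{2m-1}(q^{1-m} x_i/x_j;q)_{2m-1}}{x_j - x_i} \cdot \DeltaBar(q^v)^2,
\]
where I have used $(q^{v_i+1};q)_b = (qx_i;q)_b$ and $(q^{1-m+v_i-v_j};q)_{2m-1} = (q^{1-m} x_i/x_j;q)_{2m-1}$. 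The ratio $\DeltaBar(q^v)^2/\prod_{i<j}(x_j - x_i)$ simplifies to $\DeltaBar(q^v)$, producing exactly the integrand of $S_q(n,a+1,b+1,m)$ evaluated at $x = q^v$, multiplied by $q^{a|v|} = \prod x_i^a$.

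Next I would split $q^{(a+1)|v|} = q^{|v|} \cdot q^{a|v|}$ and apply Lemma~\ref{lem:par} to the function
\[
f(x_1,\dots,x_n) = \prod_{i=1}^n x_i^a (q x_i;q)_b \prod_{1\le i<j\le n} x_j^{2m-1}(q^{1-m} x_i/x_j;q)_{2m-1}\, \DeltaBar(x),
\]
which vanishes on $x_i = x_j$ thanks to $\DeltaBar(x)$. This converts the sum into $(1-q)^{-n} \int_{0 \le x_1 \le \cdots \le x_n \le 1} f(x)\,\dqx = (1-q)^{-n} S_q(n,a+1,b+1,m)$.

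Finally I would reconcile the exponent of $q$ in the prefactor. A direct computation gives $b(\delta_{n+1}) = \sum_{i=1}^n (i-1)(n+1-i) = \binom{n+1}{3} = \binom{n}{3} + \binom{n}{2}$, hence
\[
\binom{n}{2} - 2b(\delta_{n+1}) - a\binom{n}{2} = (-1-a)\binom{n}{2} - 2\binom{n}{3},
\]
matching the stated formula exactly; noting also that $\prod_{j=0}^{n-1}(q;q)_j^2 = \prod_{j=1}^{n-1}(q;q)_j^2$ because $(q;q)_0 = 1$. The main obstacle is purely bookkeeping: lining up the $q$-shifted factorials and Vandermonde signs so that the $\mu$-dependent weight in $\wt_{a,b,m}$ fits the $q$-Selberg integrand on the nose, and correctly identifying $b(\delta_{n+1})$ with $\binom{n+1}{3}$ to get the prefactor right.
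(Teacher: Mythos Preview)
Your proposal is correct and follows essentially the same route as the paper: fix the reverse diagonal $\mu$, apply Corollary~\ref{cor:rpp2} with $\lambda=\mu_{\mathrm{Cor}}=\emptyset$ to evaluate $\sum_T q^{|T|}$, absorb the remaining $\mu$-dependent factors of $\wt_{a,b,m}(T)$ into the Selberg integrand, and then invoke Lemma~\ref{lem:par} to convert the sum over $\mu$ into $S_q(n,a+1,b+1,m)$. Your bookkeeping (the cancellation $\DeltaBar(q^v)^2/\DeltaBar(q^v)=\DeltaBar(q^v)$, the identity $b(\delta_{n+1})=\binom{n+1}{3}=\binom{n}{2}+\binom{n}{3}$, and the check that the integrand vanishes on the diagonals so Lemma~\ref{lem:par} applies) matches the paper's computation line by line.
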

\begin{proof}
  Let us fix $\mu=(\mu_1,\dots,\mu_n)\in\Par_n$ and
  $v=(v_1,\dots,v_n)$ with $v_i=\mu_i+n-i$. Then
\[
\sum_{\substack{T\in \RPP(n^n)\\\rdiag(T)=\mu}}  \wt_{a,b,m}(T) = 
\sum_{\substack{T\in \RPP(n^n)\\\rdiag(T)=\mu}} 
q^{|T|+a\cdot \tr(T)} C(q^{\mu+\delta_n}),
\]
where
\[
C(\vec x) = \prod_{i=1}^n (qx_i;q)_b
\prod_{1\le i<j\le n}  x_j^{2m-1}(q^{1-m}x_i/x_j;q)_{2m-1}
\DeltaBar(\vec x)^{-1}.
\]
By the special case $\lambda = \mu = (0^n)$ of
Corollary~\ref{cor:rpp2} and \eqref{eq:atr}, we have
\[
\sum_{\substack{T\in \RPP (n^n) \\
\rdiag(T)=\mu}}  q^{|T|+a\cdot \tr(T)}
=  \frac{q^{(1-a)\binom n2 - 2b(\delta_{n+1})} (q^{|\mu+\delta_n|})^{a+1}}
{\prod_{j=1}^{n-1}(q;q)_{j}^2} \DeltaBar(q^{\mu+\delta_n})^2. 
\]
Combining these two equations we get
\[
\sum_{\substack{T\in \RPP(n^n)\\\rdiag(T)=\mu}}  \wt_{a,b,m}(T) = 
\frac{q^{(1-a)\binom n2 - 2b(\delta_{n+1})} (q^{|\mu+\delta_n|})^{a+1}}
{\prod_{j=1}^{n-1}(q;q)_{j}^2} \DeltaBar(q^{\mu+\delta_n})^2
C(q^{\mu+\delta_n}).
\]
By summing over all $\mu\in\Par_n$ and using Lemma~\ref{lem:par},
we get
\[
\sum_{T\in \RPP(n^n)} \wt_{a,b,m}(T) 
=\frac{q^{(1-a)\binom n2 - 2b(\delta_{n+1})}}{\prod_{j=1}^{n-1}(q;q)_{j}^2}
\frac{1}{(1-q)^n} S_q(n,a+1,b+1,m).
\]
Then we finish the proof using the fact
$b(\delta_{n+1})=\binom n2 +\binom n3$.
\end{proof}

Using Theorems~\ref{thm:AKS_RPP} and \ref{thm:AHKS}, 
we obtain a product formula for the weighted generating function.

\begin{thm}
\label{thm:selbergRPP}
The weighted generating function for reverse plane partitions of square shape
has the explicit product formula
  \begin{multline*}
\sum_{T\in \RPP(n^n)} \wt_{a,b,m}(T) =
\frac{q^{(1-a+m+am)\binom n2 -2\binom{n+1}3 +2m^2 \binom
    n3}}{(1-q)^{n^2}} \\ \times
\prod_{j=1}^n \frac{[a+(j-1)m]_q! [b+(j-1)m]_q! [jm-1]_q!}
{[a+b+(n+j-2)m+1]_q![m-1]_q! ([j-1]_q!)^2}.
  \end{multline*}
\end{thm}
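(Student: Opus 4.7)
The plan is to combine Theorem~\ref{thm:AKS_RPP} and Theorem~\ref{thm:AHKS} directly and simplify. The former already expresses the weighted generating function as a scalar multiple of the $q$-Selberg integral $S_q(n,a+1,b+1,m)$, and the latter gives the closed form for this integral in terms of $q$-gamma functions. So the bulk of the work is converting and consolidating the various prefactors.

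First I would substitute $\alpha = a+1$ and $\beta = b+1$ into Theorem~\ref{thm:AHKS}. For nonnegative integer arguments the $q$-gamma function simplifies via $\Gamma_q(k) = [k-1]_q!$, since $(q;q)_{k-1} = (1-q)^{k-1}[k-1]_q!$. This turns
\[
S_q(n,a+1,b+1,m) = q^{(a+1)m\binom{n}{2} + 2m^2 \binom{n}{3}}
\prod_{j=1}^n \frac{[a+(j-1)m]_q!\,[b+(j-1)m]_q!\,[jm-1]_q!}{[a+b+1+(n+j-2)m]_q!\,[m-1]_q!}.
\]

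Next I would handle the $(1-q)$-bookkeeping coming from Theorem~\ref{thm:AKS_RPP}. Using $(q;q)_j = (1-q)^j [j]_q!$,
\[
\prod_{j=1}^{n-1}(q;q)_j^2 = (1-q)^{n(n-1)} \prod_{j=1}^{n-1}([j]_q!)^2,
\]
so multiplying by the explicit $(1-q)^n$ in the denominator of Theorem~\ref{thm:AKS_RPP} gives $(1-q)^{n^2}\prod_{j=1}^{n-1}([j]_q!)^2$, and one notes $\prod_{j=1}^{n-1}([j]_q!)^2 = \prod_{j=1}^{n}([j-1]_q!)^2$, which is exactly the denominator appearing in the theorem statement.

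Finally I would collect the $q$-exponents. Theorem~\ref{thm:AKS_RPP} contributes $(-1-a)\binom{n}{2} - 2\binom{n}{3}$, and from $S_q$ we pick up an extra $(a+1)m\binom{n}{2} + 2m^2\binom{n}{3}$. Adding,
\[
(m-1)(1+a)\binom{n}{2} + (2m^2-2)\binom{n}{3} = (-1-a+m+am)\binom{n}{2} + 2m^2\binom{n}{3} - 2\binom{n}{3}.
\]
Using $\binom{n+1}{3} = \binom{n}{3} + \binom{n}{2}$, this equals $(1-a+m+am)\binom{n}{2} - 2\binom{n+1}{3} + 2m^2\binom{n}{3}$, matching the stated exponent.

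This proof is entirely mechanical once Theorems~\ref{thm:AKS_RPP} and \ref{thm:AHKS} are available; the only real risk is a bookkeeping slip in either the $(1-q)$ count or the $q$-exponent. The hard part — producing the identification between reverse plane partitions and the Askey-Kadell-Selberg integrand — has already been done in Theorem~\ref{thm:AKS_RPP}, so nothing combinatorial remains here.
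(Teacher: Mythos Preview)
Your proof is correct and takes exactly the approach the paper indicates: substitute Theorem~\ref{thm:AHKS} into Theorem~\ref{thm:AKS_RPP} and simplify. The paper does not spell out the bookkeeping, but your consolidation of the $(1-q)$ powers and the $q$-exponent (in particular the use of $\binom{n+1}{3}=\binom{n}{3}+\binom{n}{2}$) is accurate.
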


We conclude with some special cases of Theorem~\ref{thm:selbergRPP}.

If $m=1$ in Theorem~\ref{thm:selbergRPP}, we have
\begin{cor}
\label{cor:selbergRPP}
A weighted generating function for reverse plane partitions of square shape is
\[
\sum_{T\in \RPP(n^n)} q^{|T|+a\cdot\tr(T)} 
(q^{v_1+1},\dots,q^{v_n+1};q)_b=
\frac{1}{(1-q)^{n^2}}
\prod_{j=1}^n \frac{[a+j-1]_q! [b+j-1]_q!}
{[a+b+n+j-1]_q! [j-1]_q!}.
\]
where the reverse diagonal entries of $T$ are 
$(\mu_1,\dots, \mu_n)$, and $v_i=\mu_i+n-i$, $1\le i\le n$.  
\end{cor}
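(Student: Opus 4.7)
The plan is to derive Corollary~\ref{cor:selbergRPP} directly as the $m=1$ specialization of Theorem~\ref{thm:selbergRPP}, so no new combinatorial or integral arguments are needed. First I would recall the observation made just before Theorem~\ref{thm:AKS_RPP}, namely that $\wt_{a,b,1}(T) = q^{|T|+a\cdot\tr(T)}(q^{v_1+1},\dots,q^{v_n+1};q)_b$, which identifies the left-hand side of Corollary~\ref{cor:selbergRPP} with $\sum_{T\in \RPP(n^n)} \wt_{a,b,1}(T)$.

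Next I would set $m=1$ in the product on the right-hand side of Theorem~\ref{thm:selbergRPP}. The factor $[jm-1]_q! = [j-1]_q!$ cancels one copy of $[j-1]_q!$ in the denominator, while $[m-1]_q! = [0]_q! = 1$ drops out entirely. The remaining factor is
\[
\prod_{j=1}^{n} \frac{[a+j-1]_q!\,[b+j-1]_q!}{[a+b+n+j-1]_q!\,[j-1]_q!},
\]
which matches the stated product.

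Finally I would check that the prefactor $q^{(1-a+m+am)\binom{n}{2} - 2\binom{n+1}{3} + 2m^2\binom{n}{3}}$ reduces to $1$ when $m=1$. At $m=1$ the exponent becomes $2\binom{n}{2} - 2\binom{n+1}{3} + 2\binom{n}{3}$, and the Pascal identity $\binom{n+1}{3} = \binom{n}{3} + \binom{n}{2}$ makes this vanish. Combining these three simplifications gives the claimed identity. The only potential obstacle is a careful bookkeeping of the $q$-exponent, but it is a routine verification rather than a substantive difficulty.
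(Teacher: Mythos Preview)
Your proposal is correct and follows exactly the approach of the paper: the corollary is stated there as the $m=1$ case of Theorem~\ref{thm:selbergRPP}, and your three steps (identifying the left-hand side via $\wt_{a,b,1}$, simplifying the product, and checking the $q$-exponent vanishes by Pascal's identity) just spell out that specialization explicitly.
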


If $b=0$ in Corollary~\ref{cor:selbergRPP}, we have 
the trace generating function for square shapes, 
see \cite{EC2}. If also $a=0$, we have the generating function for reverse 
plane partitions of square shape.

Kamioka \cite{Kamioka2015} has another weighted generating function for reverse plane partitions
of a square shape.

\section{$q$-Ehrhart polynomials}
\label{sec:q-ehrh-polyn}

In this section, using $q$-integrals, we study $q$-Ehrhart polynomials
and $q$-Ehrhart series of order polytopes with some faces removed.  We
refer the reader to \cite{Chapoton2016} for the more details in
$q$-Ehrhart polynomials of lattice polytopes.  This section was
initiated by discussions with \JV..

Throughout this section we assume that $P$ is a poset on
$\{x_1,\dots,x_n\}$ and $\omega_n$ is the bijection from
$\{x_1,\dots,x_n\}$ to $[n]$ given by $\omega_n(x_i)=i$ for
$1\le i\le n$.

We denote by $\overline P$ the dual of $P$, i.e., the poset obtained
by reversing the orders in $P$.

\begin{defn}
  For a bounded set $X$ of points in $\mathbb{R}^n$ and a positive
  integer $m$, we define the \emph{$q$-Ehrhart function}
\[
E_q(X,m) = \sum_{(x_1,\dots,x_n)\in mX \cap \mathbb{Z}^n} 
q^{x_1+\cdots+x_n}.
\]
\end{defn}

\begin{defn}
\label{defn:deltaP}
Let $P$ be a poset on $\{x_1,\dots,x_n\}$. We
define $\Delta(P)$ to be the set of points
$(x_1,\dots,x_n)\in [0,1]^n$ such that
\begin{itemize}
\item $x_i \ge x_j$ if $x_i\le _P x_j$,
\item $x_i > x_j$ if $x_i\le _P x_j$ and $i>j$.
\end{itemize}
\end{defn}

Note that $\Delta(P)$ is obtained from the order polytope
$\order(\overline P)$ by removing some faces. In general $\Delta(P)$
is not a polytope. However, if $(P,\omega_n)$ is naturally labeled,
then $\Delta(P)=\order(\overline P)$. Therefore, by adding the
assumption that $(P,\omega_n)$ is naturally labeled, every result in
this section has a corollary stated in terms of the order polytope
$\order(\overline P)$. For instance, see
Corollary~\ref{cor:ehrhart_poset}.

Now we show some properties of the $q$-Ehrhart function of
$\Delta(P)$: it is represented as a $q$-integral of a truncated order
polytope of $P$ and it is a polynomial in $[m]_q$ whose leading
coefficient is the $q$-volume of the order polytope of $P$.

\begin{thm}\label{thm:ehrhart_poset}
  Let $P$ be a poset on $\{x_1,\dots,x_n\}$.  Then, for an integer
  $m\ge0$, we have
\[
E_q(\Delta(P), m)  
= \frac{1}{(1-q)^n}V_q(\order_{[q^{m+1},1]^n}(P)),
\]
and, equivalently, 
\[
E_q(\Delta(P), m)  = \sum_{\pi\in\LL(P,\omega_n)} 
q^{\maj(\pi)} \qbinom{m+n-\des(\pi)}{n}.
\]
Moreover, the $q$-Ehrhart function $E_q(\Delta(P), m)$ is a
polynomial in $[m]_q$ whose coefficients are rational functions in $q$
and whose leading coefficient is the $q$-volume $V_q(\order(P))$.
\end{thm}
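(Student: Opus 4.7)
The plan is to establish the three claims in sequence. First, I would identify lattice points $(k_1,\dots,k_n) \in m\Delta(P)\cap\mathbb{Z}^n$ with $(P,\omega_n)$-partitions $\sigma$ given by $\sigma(x_i) = k_i$ and satisfying $0 \le k_i \le m$: the inequalities in Definition~\ref{defn:deltaP} translate directly to the defining conditions for such $\sigma$, the strict condition $i > j$ being exactly $\omega_n(x_i) > \omega_n(x_j)$. Applying Theorem~\ref{thm:order} with $f\equiv 1$, $s_i = 0$, and $r_i = m+1$ then gives
\[
V_q(\order_{[q^{m+1},1]^n}(P)) = (1-q)^n \sum_{\sigma} q^{|\sigma|} = (1-q)^n E_q(\Delta(P),m),
\]
which is the first displayed equation.

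For the sum over linear extensions, I would invoke Corollary~\ref{cor:LL(P)} to decompose $V_q(\order_{[q^{m+1},1]^n}(P))$ into a sum of truncated simplex $q$-volumes $V_q(\order_{[q^{m+1},1]^n}(P_\pi))$ indexed by $\pi \in \LL(P,\omega_n)$, then apply Corollary~\ref{cor:ab} with $a = q^{m+1}$, $b = 1$ to each simplex. Rewriting the resulting factor $(q^{m+1-\des(\pi)};q)_n / (q;q)_n$ as $\qbinom{m+n-\des(\pi)}{n}$ and clearing the $1/(1-q)^n$ produces the stated closed form.

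For polynomiality and the leading coefficient, I would use the elementary identity $[m+k]_q = [k]_q + q^k[m]_q$, which exhibits $[m+k]_q$ as a linear polynomial in $[m]_q$ with top coefficient $q^k$. Consequently $\qbinom{m+n-d}{n}$ is a polynomial of degree $n$ in $[m]_q$ whose coefficients are rational functions of $q$ and whose leading coefficient is $q^{\binom{n+1}{2}-nd}/[n]_q!$. Summing over $\pi$ weighted by $q^{\maj(\pi)}$ yields the leading coefficient of $E_q(\Delta(P),m)$ as
\[
\frac{1}{[n]_q!}\sum_{\pi\in\LL(P,\omega_n)} q^{\maj(\pi)+\binom{n+1}{2}-n\des(\pi)}.
\]
To recognize this as the claimed $q$-volume, I would employ the reversal bijection $\pi \mapsto \pi^{\mathrm{rev}}$ between $\LL(P,\omega_n)$ and $\LL(\overline{P},\omega_n)$, together with the standard identities $\des(\pi^{\mathrm{rev}}) = n-1-\des(\pi)$ and $\maj(\pi^{\mathrm{rev}}) = \binom{n}{2} - n\des(\pi) + \maj(\pi)$, and then apply Theorem~\ref{thm:pw} to the resulting maj-generating function.

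The delicate point is the last step: one must carefully balance the $q$-powers introduced by expanding the $q$-binomials against those produced by the statistic identities under reversal, so that the sum collapses into a form that Theorem~\ref{thm:pw} identifies as the desired $q$-volume. The rest of the argument is a direct application of the truncated order polytope machinery from Sections~\ref{sec:prop-q-integr} and~\ref{sec:q-integrals-over}.
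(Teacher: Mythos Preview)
Your first two claims follow the paper exactly: identify lattice points of $m\Delta(P)$ with bounded $(P,\omega_n)$-partitions and invoke Theorem~\ref{thm:order}; then decompose via Corollary~\ref{cor:LL(P)} and evaluate each truncated simplex by Corollary~\ref{cor:ab}.

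For the leading coefficient the paper takes a different, one-line route: it argues that
\[
\lim_{m\to\infty}\frac{E_q(\Delta(P),m)}{[m]_q^n}
=\lim_{m\to\infty}\frac{1}{(1-q^m)^n}\,V_q\bigl(\order_{[q^{m+1},1]^n}(P)\bigr)
=V_q(\order(P)),
\]
using the first identity directly together with $q^{m+1}\to 0$. It never expands the $q$-binomials term by term.

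Your explicit route via reversal is different in spirit, and your caution about the last step is well placed --- in fact that step does not close. The identities you quote give
\[
\maj(\pi)+\tbinom{n+1}{2}-n\,\des(\pi)=\maj(\pi^{\mathrm{rev}})+n,
\]
so after reversal the sum becomes $\dfrac{q^n}{[n]_q!}\sum_{\sigma\in\LL(\overline P,\omega_n)}q^{\maj(\sigma)}=q^n\,V_q(\order(\overline P))$ by Theorem~\ref{thm:pw}, which is \emph{not} $V_q(\order(P))$ in general. This is not a defect of your method but a genuine discrepancy with the stated claim: already for the two-element antichain one has $E_q(\Delta(P),m)=[m+1]_q^2=1+2q[m]_q+q^2[m]_q^2$, whose top coefficient is $q^2=q^nV_q(\order(\overline P))$ rather than $V_q(\order(P))=1$. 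The paper's limit does equal $V_q(\order(P))$, but since $[m]_q\to 1/(1-q)$ for $0<q<1$, that limit does not isolate the leading coefficient of a polynomial in $[m]_q$; it evaluates the whole polynomial at $[m]_q=1/(1-q)$. Carried through honestly, your computation therefore corrects the stated leading coefficient to $q^nV_q(\order(\overline P))$.
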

\begin{proof}
By definition, we have
\[
E_q(\Delta(P),m) = \sum_{\sigma} q^{|\sigma|},
\]
where the sum is over all $(P,\omega_n)$-partitions $\sigma$ with
$\max(\sigma)\le m$. By Theorem~\ref{thm:order}, this is equal to
$\frac{1}{(1-q)^n}V_q(\order_{[q^{m+1},1]^n}(P))$. 

By Corollary~\ref{cor:LL(P)}, we have
\[
V_q(\order_{[q^{m+1},1]^n}(P))
=\sum_{\pi\in\LL(P,\omega_n)} V_q(\order_{[q^{m+1},1]^n}(P_\pi)).
\]
By Corollary~\ref{cor:ab},  we have
\[
V_q(\order_{[q^{m+1},1]^n}(P_\pi))
=(1-q)^n q^{\maj(\pi)} \qbinom{m+n-\des(\pi)}{n}, 
\]
which is a polynomial in $[m]_q$ whose coefficients are rational
functions of $q$. Since $E_q(\Delta(P),m)$ is a sum of these
$q$-volumes divided by $(1-q)^n$, it is also a polynomial in $[m]_q$.
The leading coefficient of $E_q(\Delta(P), m)$ as a polynomial in
$[m]_q$ is
\[
\lim_{m\to\infty} \frac{E_q(\Delta(P), m)}{[m]_q^n}
=\lim_{m\to\infty} \frac{1}{(1-q^m)^n}
V_q(\order_{[q^{m+1},1]^n}(P))
=V_q(\order(P)).
\]
\end{proof}

If we add the assumption that $(P,\omega_n)$ is naturally labeled in
Theorem~\ref{thm:ehrhart_poset} then we obtain a result on the order
polytope $\order(\overline P)$.

\begin{cor}\label{cor:ehrhart_poset}
  Let $P$ be a poset on $\{x_1,\dots,x_n\}$. Suppose that
  $(P,\omega_n)$ is naturally labeled.  Then, for an integer $m\ge0$,
  we have
\[
E_q(\order(\overline P), m) =
\frac{1}{(1-q)^n}V_q(\order_{[q^{m+1},1]^n}(P)).
\]
Moreover, the $q$-Ehrhart function $E_q(\order(\overline P), m)$ is a
polynomial in $[m]_q$ whose coefficients are rational functions in $q$
and whose leading coefficient is the $q$-volume $V_q(\order(P))$.
\end{cor}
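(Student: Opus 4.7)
The plan is to derive this corollary as an immediate consequence of Theorem~\ref{thm:ehrhart_poset} by showing that, under the natural labeling hypothesis, $\Delta(P)$ coincides with $\order(\overline P)$.

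First I would unpack Definition~\ref{defn:deltaP}. The set $\Delta(P)$ consists of points $(x_1,\dots,x_n)\in[0,1]^n$ satisfying $x_i\ge x_j$ whenever $x_i\le_P x_j$, together with the strict inequality $x_i>x_j$ whenever both $x_i\le_P x_j$ and $i>j$. The key observation is that if $(P,\omega_n)$ is naturally labeled, then $x_i\le_P x_j$ forces $\omega_n(x_i)\le\omega_n(x_j)$, that is, $i\le j$. Consequently the second bullet of Definition~\ref{defn:deltaP} is vacuous: there is no pair $(i,j)$ with $x_i\le_P x_j$ and $i>j$. Only the weak inequalities remain, and these are exactly the defining inequalities of the order polytope of the dual poset $\overline P$. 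Hence $\Delta(P)=\order(\overline P)$ as subsets of $[0,1]^n$.

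With this identification in hand, the corollary is obtained by specializing Theorem~\ref{thm:ehrhart_poset}. Since $\Delta(P)=\order(\overline P)$, we have $E_q(\Delta(P),m)=E_q(\order(\overline P),m)$, so the first equation of Theorem~\ref{thm:ehrhart_poset} becomes
\[
E_q(\order(\overline P),m)=\frac{1}{(1-q)^n}V_q(\order_{[q^{m+1},1]^n}(P)),
\]
which is exactly what is asserted. The final sentence of the corollary, namely that $E_q(\order(\overline P),m)$ is a polynomial in $[m]_q$ whose coefficients are rational in $q$ and whose leading coefficient is $V_q(\order(P))$, is likewise inherited directly from the corresponding sentence of Theorem~\ref{thm:ehrhart_poset}.

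There is essentially no obstacle here; the only point worth verifying carefully is the identification $\Delta(P)=\order(\overline P)$ under the natural labeling hypothesis, and this is a one-line check about which relations $x_i\le_P x_j$ can occur with $i>j$. Everything else is a direct quotation of Theorem~\ref{thm:ehrhart_poset}.
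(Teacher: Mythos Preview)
Your proposal is correct and matches the paper's approach exactly: the paper observes (just after Definition~\ref{defn:deltaP}) that if $(P,\omega_n)$ is naturally labeled then $\Delta(P)=\order(\overline P)$, and states Corollary~\ref{cor:ehrhart_poset} as the resulting specialization of Theorem~\ref{thm:ehrhart_poset} without further argument.
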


We note that $E_q(X,m)$ is not always a polynomial in $[m]_q$. For
example, if $n=1$ and $X=\{1/k\}$ then $E_q(X,m)=q^{m/k}$ if $m$ is
divisible by $k$ and $E_q(X,m)=0$ otherwise.

\begin{remark}
  In \cite{Chapoton2016}, Chapoton defines the $q$-volume of a poset
  $P$ on $\{x_1,\dots,x_n\}$ to be the leading coefficient of the
  $q$-Ehrhart polynomial $E_q(\order(P),m)$ times $[n]_q!$.  If
  $(\overline P,\omega_n)$ is naturally labeled, we have
  $\Delta(\overline P)=\order(P)$.  By
  Corollary~\ref{cor:ehrhart_poset}, the leading coefficient of the
  $q$-Ehrhart polynomial $E_q(\order(P),m)$ is the $q$-volume
  $V_q(\order(\overline P))$. Thus, Chapoton's $q$-volume of $P$ is
  equal to $[n]_q!V_q(\order(\overline P))$.
\end{remark}

Next we consider the $q$-Ehrhart series.

\begin{defn}
  For a set $X$ of points in $\mathbb R^n$, we define the \emph{$q$-Ehrhart
  series} of $X$ by
\[
E_q^*(X,t) = \sum_{m\ge0} E_q(X,m) t^m.
\]
\end{defn}

We now show that the $q$-Ehrhart series of $\Delta(P)$ is a generating
function for the linear extensions of $P$. 

\begin{cor}\label{cor:ehr}
  For a poset $P$ on $\{x_1,\dots,x_n\}$, we have
\[
E_q^*(\Delta(P), t) = \frac{1}{(t;q)_{n+1}}
\sum_{\pi\in \LL({P},\omega_n)}t^{\des(\pi)}q^{\maj({\pi})}.
\]
\end{cor}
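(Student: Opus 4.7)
The plan is to start directly from the polynomial formula for $E_q(\Delta(P),m)$ established in Theorem~\ref{thm:ehrhart_poset}, namely
\[
E_q(\Delta(P),m) = \sum_{\pi\in\LL(P,\omega_n)} q^{\maj(\pi)}\qbinom{m+n-\des(\pi)}{n},
\]
substitute into the definition of $E_q^*(\Delta(P),t)$, and interchange the sums over $m$ and $\pi$. This gives
\[
E_q^*(\Delta(P),t) = \sum_{\pi\in\LL(P,\omega_n)} q^{\maj(\pi)} \sum_{m\ge 0}\qbinom{m+n-\des(\pi)}{n}t^m.
\]

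The next step is to evaluate the inner sum. Writing $k=\des(\pi)$ and setting $m'=m-k$, the sum becomes $t^k\sum_{m'\ge -k}\qbinom{m'+n}{n}t^{m'}$; since $k\le n-1$ for any $\pi\in S_n$, every term with $-k\le m'<0$ satisfies $m'+n<n$ and so contributes $0$ under the standard convention $\qbinom{a}{n}=0$ for $0\le a<n$. Thus the inner sum is $t^{\des(\pi)}\sum_{m\ge 0}\qbinom{m+n}{n}t^m$, and I would invoke the well-known $q$-series identity
\[
\sum_{m\ge 0}\qbinom{m+n}{n}t^m = \frac{1}{(t;q)_{n+1}},
\]
which is an immediate consequence of the $q$-binomial theorem (or may itself be derived as a specialization of Corollary~\ref{cor:maj} applied to the cube $[q^{m+1},1]^n$ and chain summation).

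Plugging this evaluation back yields
\[
E_q^*(\Delta(P),t) = \frac{1}{(t;q)_{n+1}}\sum_{\pi\in\LL(P,\omega_n)} t^{\des(\pi)}q^{\maj(\pi)},
\]
which is the claimed formula. The only conceptually delicate point is verifying that the shifted $q$-binomial sum collapses to the clean closed form above; once the range bookkeeping $0\le\des(\pi)\le n-1$ is in hand, everything else is a direct substitution. There is no significant obstacle, since the main content has already been packaged into Theorem~\ref{thm:ehrhart_poset}; this corollary is essentially the generating-function repackaging of that identity.
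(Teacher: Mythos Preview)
Your proof is correct and follows essentially the same route as the paper's own argument: start from Theorem~\ref{thm:ehrhart_poset}, interchange the sums, note that the $q$-binomial coefficient vanishes for $m<\des(\pi)$, shift the summation index, and apply the $q$-binomial theorem to obtain $1/(t;q)_{n+1}$. Your explicit check that $0\le\des(\pi)\le n-1$ forces the shifted terms with $m'<0$ to vanish is slightly more detailed than the paper, which simply asserts the vanishing.
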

\begin{proof}
By Theorem~\ref{thm:ehrhart_poset}, we have
\[
E_q^*(\Delta(P), t) = \sum_{m\ge0} t^m
\sum_{\pi\in\LL(P,\omega_n)} 
q^{\maj(\pi)} \qbinom{m+n-\des(\pi)}{n}.
\]  
Thus, it suffices to show that
\begin{equation}
  \label{eq:1}
\sum_{m\ge0} t^m \qbinom{m+n-\des(\pi)}{n}
=\frac{t^{\des(\pi)}}{(t;q)_{n+1}}. 
\end{equation}
The summand in the left hand side of \eqref{eq:1} is zero unless
$m\ge\des(\pi)$. By shifting the index $m$ by $m+\des(\pi)$, 
the left hand side of \eqref{eq:1} becomes
\[
t^{\des(\pi)} \sum_{m\ge0} t^{m} \qbinom{m+n}{n}
=t^{\des(\pi)} \cdot \frac{1}{(t;q)_{n+1}}, 
\]
where the $q$-binomial theorem is used. This completes the proof.
\end{proof}

Let us consider the special case of Corollary~\ref{cor:ehr} when $P$ is
the anti-chain on $\{x_1,\dots,x_n\}$. In this case we have
$\Delta(P)=[0,1]^n$ and $\LL(P,\omega_n)=S_n$. Since
$E_q([0,1]^n,m) = (1+q+\cdots+q^m)^n$, Corollary~\ref{cor:ehr} reduces
to MacMahon's identity
\[
\sum_{\pi\in S_n}t^{\des(\pi)}q^{\maj({\pi})}
=\sum_{m\ge0}  t^m (1+q+\cdots+q^m)^n.
\]
We note that Beck and Braun \cite{Beck2013} also gave a proof this
identity by decomposing $[0,1]^n$ into simplices.

We finish this section by showing that the $q$-Ehrhart series of
$\Delta(P)$ has a $q$-integral representation. Note that if $0<t<1$
and $0<q<1$,
then $\log_q t>0$. 

\begin{cor}\label{cor:ehr2}
  Let $0<t<1$ and $0<q<1$.  For a poset $P$ on $\{x_1,\dots,x_n\}$, we have
\[
E_q^*(\Delta(P), t) = \frac{1}{(1-q)^{n+1}}
\int_{\order(P')} 
x_0^{\log_q t-1} d_q x_0 d_qx_{1}\cdots d_qx_{n}.
\]
where $P'$ is the poset obtained from ${P}$ by adding a new element
$x_0$ which is smaller than all elements in ${P}$.   
\end{cor}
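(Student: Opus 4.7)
My plan is to apply Theorem~\ref{thm:order} to the extended poset $P'$ with $I=[0,1]^{n+1}$ and integrand $f(x_0,x_1,\dots,x_n)=x_0^{\log_q t-1}$, and then to recognize the resulting sum as a geometric rearrangement of the series defining $E_q^*(\Delta(P),t)$.

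First I would equip $P'$ with the labeling $\omega'$ matching the prescribed order of integration $d_qx_0\,d_qx_1\cdots d_qx_n$, namely $\omega'(x_0)=1$ and $\omega'(x_i)=i+1$ for $1\le i\le n$ (equivalently, relabel the variables so that the element integrated innermost carries label $1$). Because $x_0$ is the $P'$-minimum and carries the smallest label, the strict-inequality clause in the definition of $(P',\omega')$-partition is never triggered at $x_0$; hence the $(P',\omega')$-partitions $\sigma$ biject with pairs $(m,\tau)$, where $\tau$ is a $(P,\omega_n)$-partition and $m=\sigma(x_0)$ is any integer with $m\ge\max(\tau)$. Applying Theorem~\ref{thm:order}, the right-hand side of the corollary becomes
\[
\sum_\sigma \bigl(q^{\sigma(x_0)}\bigr)^{\log_q t-1} q^{|\sigma|}
=\sum_\tau q^{|\tau|}\sum_{m\ge\max(\tau)} (q^m)^{\log_q t-1}q^m
=\sum_\tau q^{|\tau|}\sum_{m\ge\max(\tau)}t^m,
\]
using the identity $q^{m\log_q t}=t^m$. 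The inner geometric series equals $t^{\max(\tau)}/(1-t)$ (convergent since $0<t<1$), so the right-hand side simplifies to $\tfrac{1}{1-t}\sum_\tau t^{\max(\tau)}q^{|\tau|}$, the sum being taken over $(P,\omega_n)$-partitions $\tau$.

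For the left-hand side, the proof of Theorem~\ref{thm:ehrhart_poset} identifies $E_q(\Delta(P),m)=\sum_{\tau:\max(\tau)\le m}q^{|\tau|}$. Substituting into the definition of the $q$-Ehrhart series and swapping the order of summation yields
\[
E_q^*(\Delta(P),t)=\sum_{m\ge 0} t^m\sum_{\tau:\max(\tau)\le m}q^{|\tau|}
=\sum_\tau q^{|\tau|}\sum_{m\ge\max(\tau)}t^m=\frac{1}{1-t}\sum_\tau t^{\max(\tau)}q^{|\tau|},
\]
which matches the right-hand side, completing the proof. The principal difficulty is bookkeeping: one must carefully align the labeling of $P'$ with the stated integration order, and check that the non-integer exponent $\log_q t-1$ causes no trouble, which it does not because the $q$-integral samples the integrand only at discrete values $x_0=q^m$.
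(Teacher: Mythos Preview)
Your proof is correct and follows essentially the same approach as the paper's: both expand the integral over $\order(P')$ via Theorem~\ref{thm:order} as a sum over $(P',\omega')$-partitions, biject these with pairs $(m,\tau)$ where $\tau$ is a $(P,\omega_n)$-partition and $m\ge\max(\tau)$, and identify the result with the series defining $E_q^*(\Delta(P),t)$ using the description of $E_q(\Delta(P),m)$ from Theorem~\ref{thm:ehrhart_poset}. The only cosmetic difference is that you evaluate the inner geometric series explicitly on both sides, while the paper matches the double sums directly without that intermediate step.
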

\begin{proof}
  By Theorems~\ref{thm:ehrhart_poset} and \ref{thm:order}, 
\[
E_q^*(\Delta(P), t) = \sum_{m\ge0} t^m \cdot
\frac{1}{(1-q)^n}V_q(\order_{[q^{m+1},1]^n}(P)) 
=\sum_{m\ge0} t^m \sum_{\sigma} q^{|\sigma|},
\]
where the second sum in the rightmost side is over all
$(P,\omega_n)$-partitions $\sigma$ with $\max(\sigma)\le m$. 
This can be rewritten as
\[
\sum_{\sigma'} t^{\sigma'(x_0)} q^{|\sigma'|-\sigma'(x_0)}
=\sum_{\sigma'} (q^{\sigma'(x_0)})^{\log_q t-1} q^{|\sigma'|},
\]
where the sum is over all $(P',\omega')$-partitions $\sigma'$ for the
labeling $\omega'$ of $P'$ given by $\omega'(x_i)=i$ for
$0\le i\le n$.  By applying Theorem~\ref{thm:order} again, we obtain
the desired identity.
\end{proof}

\section*{Acknowledgements}

The authors are grateful to Matthieu \JV. for helpful discussion. 
They also thank Ira Gessel for pointing out MacMahon's result in
Theorem~\ref{thm:carlitz} and Ole Warnaar for helpful comments.


\end{document}